\theoremstyle{plain}
\newtheorem{mythm}{Theorem}[section]
\newtheorem{myprop}[mythm]{Proposition}
\newtheorem{mylemma}[mythm]{Lemma}
\newtheorem{mycor}[mythm]{Corollary}
\newtheorem{mydef}[mythm]{Definition}
\newtheorem{myquest}[mythm]{Question}
\newtheorem{introthm}{Theorem}
\theoremstyle{remark}
\newtheorem*{myrmk}{Remark}
\newtheorem*{myobs}{Observation}
\newcommand{\ol}[1]{\overline{#1}}
\newcommand{\ot}[1]{\widetilde{#1}}
\newcommand{\rar}{\rightarrow}
\newcommand{\gen}[1]{\langle#1\rangle}
\newcommand{\ggen}[1]{\langle\langle#1\rangle\rangle}
\newcommand{\abs}[1]{|#1|}
\newcommand{\restr}[2]{#1\big|_{#2}}
\newcommand{\bR}{\mathbb R}
\newcommand{\bZ}{\mathbb Z}
\newcommand{\bN}{\mathbb N}
\newcommand{\fI}{\mathfrak I}
\newcommand{\sgr}{\le}
\newcommand{\nor}{\trianglelefteq}
\newcommand{\rank}[1]{\normalfont{\text{rank}}(#1)}
\newcommand{\im}[1]{\normalfont{\text{im}}(#1)}
\newcommand{\core}[1]{\normalfont{\text{core}}(#1)}
\newcommand{\bcore}[1]{\normalfont{\text{core}}_*(#1)}
\newcommand{\cov}[1]{\normalfont{\text{cov}}(#1)}
\newcommand{\fold}[1]{\normalfont{\text{fold}}(#1)}
\newcommand{\grafo}{F_n\text{-labeled graph}}
\newcommand{\grafos}{F_n\text{-labeled graphs}}
\newcommand{\parallele}{\text{parallel}}
\newcommand{\inserzione}{\text{insertion word}}
\newcommand{\omeno}[1]{o^-(#1)}
\newcommand{\opiu}[1]{o^+(#1)}
\title{Ideals of equations for elements in a free group and Stallings folding}
\author{
Dario Ascari \thanks{Ascari was funded by the Engineering and Physical Sciences Research Council.}\\
{\small \textit{Mathematical Institute, Andrew Wiles Building,}}\\
{\small \textit{University of Oxford, Oxford OX2 6GG, UK}}\\
{\small e-mail: \texttt{ascari@maths.ox.ac.uk}}\\
\\
}
\begin{document}

\maketitle

\begin{abstract}
Let $F$ be a finitely generated free group and let $H\sgr F$ be a finitely generated subgroup. Given an element $g\in F$, we study the ideal $\fI_g$ of equations for $g$ with coefficients in $H$, i.e. the elements $w(x)\in H*\gen{x}$ such that $w(g)=1$ in $F$. The ideal $\fI_g$ is a normal subgroup of $H*\gen{x}$, and we provide an algorithm, based on Stallings folding operations, to compute a finite set of generators for $\fI_g$ as a normal subgroup.

We provide an algorithm to find an equation in $\fI_g$ with minimum degree, i.e. an equation $w(x)$ such that its cyclic reduction contains the minimum possible number of occurrences of $x$ and $x^{-1}$; this answers a question of A. Rosenmann and E. Ventura. 
More generally, we provide an algorithm that, given $d\in\bN$, determines whether $\fI_g$ contains equations of degree $d$ or not, and we give a characterization of the set of all the equations of that specific degree. We define the set $D_g$ of all integers $d$ such that $\fI_g$ contains equations of degree $d$; we show that $D_g$ coincides, up to a finite set, either with the set of non-negative even numbers or with the set of natural numbers.

Finally, we provide examples to illustrate the techniques introduces in this paper. We discuss the case where $\rank{H}=1$. We prove that both kinds of sets $D_g$ can actually occur. The examples also show that the equations of minimum possible degree aren't in general enough to generate the whole ideal $\fI_g$ as a normal subgroup.
\end{abstract}

\begin{center}
\small \textit{Keywords:} Equations over Groups, Free Groups\\
\small \textit{2010 Mathematics subject classification:} 20F70, 20E05 (20F65)
\end{center}


\section{Introduction}

Given an extension of fields $K\subseteq F$ and an element $\alpha\in F$, a first interesting question to ask is to determine whether the element $\alpha$ is algebraic over $K$, i.e. whether it satisfies some non-trivial equation with coefficients in $K$. In other words we want to determine whether there exists a non-trivial polynomial $p(x)\in K[x]$ such that $p(\alpha)=0$. If the answer is affirmative, one tries to study the ideal $I_\alpha\subseteq K[x]$ of equations for $\alpha$ over $K$: this turns out to be a principal ideal, and thus its structure is very simple. Completely analogous questions can be asked in the context of group theory, but the answers turn out to be more complicated.

Let $F_n$ be a free group generated by $n$ elements $a_1,...,a_n$. Let $H\sgr F_n$ be a finitely generated subgroup and consider an infinite cyclic group $\gen{x}\cong\bZ$. An \textbf{equation} in $x$ with coefficients in $H$ is an element $w\in H*\gen{x}$ in the free product of $H$ and $\gen{x}$; $w$ has a unique expression as a reduced word in the alphabeth $\{x,x^{-1}\}\cup H\setminus\{1\}$.
For an equation $w\in H*\gen{x}$ we define the \textbf{degree} of $w$ as the number of occurrences of $x$ and $\ol{x}$ in the cyclic reduction of $w$.

For an element $g\in F_n$, consider the map $\varphi_g:H*\gen{x}\rar F_n$ that is the inclusion on $H$, and that sends $x$ to $g$; this is the ``evaluation in $g$'' map. We say that $g$ is a \textbf{solution} for the equation $w$ if $\varphi_g(w)=1$. We define the \textbf{ideal} $\fI_g$ to be the normal subgroup $\fI_g=\ker\varphi_g$ of $H*\gen{x}$.

Fix a finitely generated subgroup $H\sgr F_n$ and an element $g\in F_n$. First of all, we would like to determine whether $\fI_g$ is trivial or not, i.e. whether $g$ satisfies some non-trivial equation over $H$ or not. This has been answered recently by A. Rosenmann and E. Ventura in \cite{Rosenmann1}, and in particular they obtained the following result:

\begin{mydef}
Let $H\sgr F_n$ be a finitely generated subgroup and let $g\in F_n$ be any element. We say that $g$ \textbf{depends} on $H$ if any of the following equivalent conditions hold:

(i) The ideal $\fI_g$ is non-trivial.

(ii) We have $\rank{\gen{H,g}}\le\rank{H}$.
\end{mydef}

\begin{mythm}
Let $H\sgr F_n$ be a finitely generated subgroup. Then there is an algorithm that computes a finite set of elements $g_1,...,g_k\in F_n$ such that, for every $g\in F_n$, the following are equivalent:

(i) The element $g$ depends on $H$.

(ii) The element $g$ belongs to one of the double cosets $Hg_1H,...,Hg_kH$.
\end{mythm}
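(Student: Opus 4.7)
The plan is to use Stallings foldings on the graph $\Gamma_g$ obtained from the Stallings graph $\Gamma$ of $H$ by attaching at the basepoint $*$ a subdivided loop labeled by the reduced word of $g$, and to exploit the fact that the property ``$g$ depends on $H$'' is invariant under the $H$--$H$ double-coset class of $g$.

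First, I would set up a folding criterion for dependence. Each elementary Stallings fold identifies two edges sharing their initial vertex and label; it preserves rank when the two far endpoints are distinct (they then get identified), and it strictly decreases rank by $1$ when the far endpoints were already the same vertex. Attaching a loop labeled $g$ increases the rank by $1$, so $g$ depends on $H$ if and only if the folding sequence applied to $\Gamma_g$ contains at least one rank-decreasing fold. This turns the problem into a combinatorial statement about folding sequences on a finite graph.

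Next, I would record that $\langle H, h_1 g h_2\rangle = \langle H, g\rangle$ for every $h_1, h_2 \in H$, so the dependency set $D = \{g \in F_n : g \text{ depends on } H\}$ is a union of double cosets $HgH$. Proving the theorem then reduces to (a) showing that $D$ is covered by finitely many such double cosets, and (b) producing representatives effectively. I would decompose any $g \in D$ by reading it into $\Gamma$ from $*$: write $g$ as an alternation of maximal subwords that trace paths in $\Gamma$ and of ``detours'' that leave $\Gamma$ at some vertex and return at another. The first and last such pieces of $g$ can be modified freely within the double coset by left/right multiplication by elements of $H$.

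The crux is a finiteness statement: there is a constant $L = L(\Gamma)$ such that every double coset in $D$ has a representative of length at most $L$. I would prove this by locating the rank-decreasing fold in the folding sequence, tracing its ancestry to a bounded portion of the original loop, and showing that the rest of $g$ contributes only rank-preserving folds and can be excised while staying inside the same double coset. Because $\Gamma$ has only finitely many vertices, the possible ``local configurations'' around a rank-decreasing fold come from a finite set, and a pigeonhole argument along the folding sequence rules out unbounded pieces of the loop being essential for dependence. The algorithm then enumerates reduced words of length at most $L$, checks dependence for each by explicitly folding $\Gamma_g$ and looking for a rank-decreasing fold, and returns those that succeed as the list $g_1, \dots, g_k$.

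The main obstacle is the quantitative reduction in the preceding paragraph: making precise the notion of ``inessential appendage,'' and proving that only bounded-size local data near the rank-decreasing fold determines dependence, is the technical heart of the proof. Once the bound $L(\Gamma)$ is in hand, both the finiteness of $D / (H$--$H)$ and the algorithm are immediate.
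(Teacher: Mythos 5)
A preliminary remark: the paper does not prove this statement at all — it is quoted from Rosenmann and Ventura \cite{Rosenmann1} as background — so your attempt can only be assessed on its own terms, not against an argument in this paper.

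Your setup is fine: attaching a subdivided loop labelled $g$ to the Stallings graph $\Gamma$ of $H$, observing that $g$ depends on $H$ exactly when the folding sequence contains a non-rank-preserving fold (the same Betti-number bookkeeping as in Proposition \ref{folding2}), and noting that dependence is a double-coset invariant because $\gen{H,h_1gh_2}=\gen{H,g}$. This already makes dependence decidable for each individual $g$. The genuine gap is the step you yourself call the technical heart: that every dependent double coset contains a representative of length at most a computable $L(\Gamma)$. Two concrete problems with the mechanism you propose. First, the shortening move ``excise the part of $g$ away from the rank-decreasing fold while staying inside the same double coset'' is not available in general: if $g=g_1wg_2$, then $Hg_1wg_2H=Hg_1g_2H$ only when $w\in\bigl(g_1^{-1}Hg_1\bigr)\bigl(g_2Hg_2^{-1}\bigr)$, and the fact that a segment of $g$ merely folds onto (is readable along) a path of $\Gamma$ gives no such membership unless that path closes up at the basepoint. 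The same misconception appears when you say the first and last readable pieces of $g$ ``can be modified freely within the double coset'': left/right multiplication by $H$ lets you cancel prefixes and suffixes that lie in $H$, not arbitrary readable ones. Second, the pigeonhole you sketch would at best produce \emph{some} shorter dependent element; the theorem requires, for each dependent $g$, a bounded-length element in the \emph{same} coset $HgH$, and ``same local configuration around the rank-decreasing fold'' does not imply ``same double coset.'' Indeed dependence is not determined by bounded local data near that fold: whether a rank-decreasing fold ever occurs depends on how the whole of $g$ is absorbed into $\Gamma$, so the claim that only a bounded neighbourhood of the fold is ``essential'' needs a real argument, which is precisely the content of the cited result.

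In short: the reduction ``dependence $\Leftrightarrow$ a rank-decreasing fold'' and the double-coset invariance are correct, but the finiteness of the list $g_1,\dots,g_k$ (equivalently, the computable bound $L(\Gamma)$ on representatives) is asserted rather than proved, and the two devices offered in its support — double-coset-preserving excision of readable segments, and locality of dependence at the rank-decreasing fold — fail as stated. As it stands the proposal does not establish the theorem.
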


For the whole paper, when an algorithm takes in input a finitely generated subgroup $H\sgr F_n$, we mean that the subgroup is given by means of a finite set of generators, each provided as a word in the basis $a_1,...,a_n$ of $F_n$.

\

In this paper, we study the structure of the ideal $\fI_g$ of the equations with coefficients in $H$ and with $g$ as a solution. Most of our results can be generalized to equations in more variables, but for simplicity of notation we deal only with the one-variable case; the statements of the results in more variables can be found in Section \ref{SectionMultivariate} at the end of the paper.

\

For an arbitrary homomorphism from a finitely generated free group to a finitely presented group, the kernel is always finitely generated as a normal subgroup. If the target is free, then it follows from Grushko's Theorem that there is an algorithm to find a finite normal generating set. In Section \ref{SectionStallings} we describe an efficient algorithm, with focus on the case of the map $\varphi_g:H*\gen{x}\rar F_n$ defined above, whose kernel is exactly the ideal $\ker\varphi_g=\fI_g$. The key idea, based on Stallings folding operations, is the following. In a chain of folding operations, the rank-preserving folding operations are homotopy equivalences, and thus isomorphisms at the level of fundamental group, while the non-rank-preserving folding operations give a non-injective map of fundamental groups (that means, in our case, adding generators to the kernel). The novel aspect of our algorithm, which is explained in section \ref{SectionStallings}, is the following: we show that the non-rank-preserving folding operations can be postponed until the end of the chain of folding operations (see Figure \ref{gmgl}): this gives a clean and efficient way to produce a set of generators for the kernel as a normal subgroup of $H*\gen{x}$.

\

In \cite{Rosenmann1} Rosenmann and Ventura ask the following question:

\begin{myquest}
Is it possible to (algorithmically) find an equation of minimum degree for an element $g$ that depends on $H$?
\end{myquest}

In Section \ref{SectionMain} we give an affirmative answer to this question.

\begin{introthm}[See Corollary \ref{algorithm}]\label{introthm1}
There is an algorithm that, given $H\sgr F_n$ finitely generated and $g\in F_n$ that depends on $H$, produces a non-trivial equation $w\in\fI_g$ of minimum possible degree.
\end{introthm}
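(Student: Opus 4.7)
The plan is to combine two ingredients: an effective upper bound on the minimum possible degree, obtained from a normal generating set of $\fI_g$, and a per-degree decision procedure.

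First I would run the algorithm of Section \ref{SectionStallings} on $\varphi_g:H*\gen{x}\rar F_n$ to obtain a finite normal generating set $\{w_1,\dots,w_k\}$ of $\fI_g$; since $g$ depends on $H$, this set is non-empty. Each $w_i$ is itself an equation in $\fI_g$, so the minimum possible degree $d_{\min}$ satisfies
\[
d_{\min}\le D:=\min_i\deg(w_i),
\]
a computable integer. The problem then reduces to a finite search: for each $d=1,2,\dots,D$, decide whether $\fI_g$ contains a non-trivial equation of degree exactly $d$ and, if so, construct one; returning the first success gives the desired equation.

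The technical heart, which I expect to be carried out in Section \ref{SectionMain}, is the per-degree decision procedure. After cyclic permutation, any cyclically reduced equation of degree $d$ has the form $w=h_0 x^{\epsilon_1}h_1\cdots x^{\epsilon_d}h_d$ with $h_i\in H$ and $\epsilon_i\in\{\pm 1\}$, and the condition $w\in\fI_g$ reads as the word equation
\[
h_0\, g^{\epsilon_1}\, h_1\, g^{\epsilon_2}\cdots g^{\epsilon_d}\, h_d = 1 \quad \text{in }F_n.
\]
For each of the $2^d$ sign patterns I would reformulate the existence of admissible coefficients $h_i\in H$ as a closed-walk problem on the folded Stallings graph $\Gamma$ of $\gen{H,g}$: the $g^{\epsilon_i}$ traverse a fixed loop $\gamma_g$ in $\Gamma$ in the prescribed direction, while each intermediate factor $h_i$ must correspond to a closed walk at $v_0$ that lifts through the fold map $\Gamma_H\rar\Gamma$ to a loop in $\Gamma_H$, so that it genuinely represents an element of $H$ and not merely of $\gen{H,g}$. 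Because $\Gamma$ and $\Gamma_H$ are finite, this admissibility constraint should be recognisable by a finite combinatorial device built from the two graphs and the fold map between them, reducing the existence question to a finite search; a positive answer can then be decoded into an explicit witness equation.

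The main obstacle is precisely this reduction to a finite problem: the coefficients $h_i$ are \emph{a priori} arbitrary elements of $H$, so the naïve search space is infinite. The key step is to identify the finite amount of combinatorial state on which admissibility genuinely depends, and to prove that this state suffices---essentially that short, bounded-complexity representatives are enough to detect the existence of a degree-$d$ equation. Once this is established, the outer loop over $d\le D$ terminates and the algorithm outputs a non-trivial equation in $\fI_g$ of minimum possible degree.
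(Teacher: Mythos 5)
Your outer loop is sound and is essentially the paper's: the bound $D$ on $d_{\min}$ is obtained exactly as in Corollary \ref{algorithm}, by taking the degree of any non-trivial element of $\fI_g$ (e.g.\ a normal generator from Theorem \ref{idealfingen}). The genuine gap is the per-degree step, which you yourself label ``the main obstacle'' and ``the key step'' and then do not supply. The coefficients $h_i$ range over the infinite group $H$, and asserting that admissibility ``should be recognisable by a finite combinatorial device'' is a statement of hope, not an argument: nothing in your sketch bounds the complexity of the $h_i$ or exhibits the finite state space, and this finiteness claim is precisely the mathematical content of the theorem. The paper's Section \ref{SectionMain} exists to fill exactly this hole: equations are encoded as reduced paths in $G=\bcore{H}\vee\bcore{\gen{g}}$, a ``parallel cancellation move'' is introduced (Lemma \ref{parallelcancellation}), and a counting argument on maximal reduction processes (Proposition \ref{findingparallels}, resting on the bound for innermost cancellations in Lemma \ref{innermostbounded}) shows that a minimal-degree equation whose path exceeds length $16L^2d_{\min}$ can be strictly shortened without raising the degree; Theorem \ref{main} then gives the explicit length bound $16L^2d_{\min}\le 16L^2D$, and the algorithm is a brute-force search over the finitely many reduced paths of that length. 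So as written, your proposal reduces the statement to its hardest part and leaves that part unproved.

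For what it is worth, your sketched route could be completed by an argument genuinely different from the paper's: for a fixed sign pattern $(\epsilon_1,\dots,\epsilon_d)$, the set $Hg^{\epsilon_1}Hg^{\epsilon_2}\cdots g^{\epsilon_d}H$ is a rational subset of $F_n$ (a product of rational subsets), and membership of $1$ in a rational subset of a free group is decidable by Benois' theorem, which would give a per-degree decision procedure without any length bound on a witness. You would still have to handle the non-triviality constraints carefully (forcing the relevant intermediate $h_i\neq 1$ so that the word is in normal form, cyclically reduced, and of degree exactly $d$; otherwise a positive test may only produce an equation of smaller degree or even the trivial element), and you would lose what the paper's quantitative bounds buy, namely the explicit description of all degree-$d$ equations via insertion moves (Theorems \ref{main3} and \ref{main4}). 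Until such an argument is actually carried out, the proposal is incomplete.
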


We give a brief outline of the proof of Theorem \ref{introthm1}. We take a non-trivial (cyclically reduced) equation $w\in\fI_g$ of minimum possible degree; we think of $w$ as a word in the letters $a_1,...,a_n,x$ (where $a_1,...,a_n$ is a basis for $F_n$). We then prove that, for words of sufficient length, some parts of the word $w$ can be literally cut away, by means of a move that we call a ``parallel cancellation move'', introduced in Lemma \ref{parallelcancellation}; this produces another equation $w'\in\fI_g$, which is strictly shorter than $w$ and which has the same degree. By iterating this process, we prove that there is an equation in $\fI_g$ of minimum possible degree whose length is bounded (see Theorem \ref{main} for the precise bound). With this bound established, the algorithm now just takes all the (finitely many) elements of $H*\gen{x}$ which are short enough, and for each of them it checks whether it belongs to $\fI_g$, recording its degree.

A completely analogous result holds for equations of any fixed degree $d$: if $\fI_g$ contains a non-trivial equation of degree $d$, then it contains one whose length is bounded (see Theorem \ref{main3} for the precise bound). In particular we prove the following theorem:

\begin{introthm}[See Corollary \ref{algorithm3}]\label{introthm2}
There is an algorithm that, given $H\sgr F_n$ and $g\in F_n$ and an integer $d\ge1$, tells us whether $\fI_g$ contains non-trivial equations of degree $d$, and, if so, produces an equation $w\in\fI_g$ of degree $d$.
\end{introthm}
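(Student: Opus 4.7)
The plan is to mirror the strategy already used for Theorem \ref{introthm1}, replacing ``equation of minimum degree'' by ``equation of prescribed degree $d$'' throughout. Suppose $\fI_g$ contains some non-trivial equation of degree $d$ and pick such an equation $w$ that is cyclically reduced. Applying the degree-$d$ analogue Theorem \ref{main3}, iterated parallel cancellation moves as in Lemma \ref{parallelcancellation} shrink $w$ to an equation $w' \in \fI_g$, still of degree $d$, whose length is bounded by an explicit constant $L = L(H, g, d)$. In particular, if no equation of degree $d$ and length at most $L$ lies in $\fI_g$, then $\fI_g$ contains no equation of degree $d$ at all.

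Once this bound is in hand, the algorithm is a brute-force search. First I would enumerate the finitely many reduced words in $H * \gen{x}$ of length at most $L$ whose number of $\{x, x^{-1}\}$-occurrences in their cyclic reduction equals exactly $d$; for each such candidate $w$, compute $\varphi_g(w) \in F_n$ and check via the free-group word problem whether it is trivial. The algorithm outputs the first such $w$ for which $\varphi_g(w) = 1$, and otherwise reports that $\fI_g$ contains no equation of degree $d$.

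The main obstacle is establishing Theorem \ref{main3}, namely producing the bound $L(H, g, d)$ and verifying that the parallel cancellation move of Lemma \ref{parallelcancellation} preserves both membership in $\fI_g$ and the degree. Preservation of degree is the delicate point: the move excises ``parallel'' pieces of the word, and one must check that these pieces carry a balanced collection of $x$ and $x^{-1}$ letters, so that the cyclic reduction of the resulting $w'$ still contains exactly $d$ occurrences of $x^{\pm 1}$. This is essentially the same combinatorial computation already performed in the minimum-degree case of Theorem \ref{introthm1}, so the extension to arbitrary fixed $d$ amounts to checking that the argument does not secretly rely on minimality of the degree but only on its preservation under the cancellation.
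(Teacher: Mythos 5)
Your algorithmic half is exactly the paper's route: given the bound of Theorem \ref{main3}, enumerate candidates up to that bound and test each one by evaluating $\varphi_g$ and solving the word problem in $F_n$. (One small caution there: the bound in Theorem \ref{main3} is on the length of the corresponding path in the graph $G$, so the finite search should be over reduced paths $\sigma:I_l\rar G$ of bounded length, or equivalently over words in the letters $a_1,\dots,a_n,x$ of bounded length --- not over elements of $H*\gen{x}$ of bounded syllable length, since each $H$-syllable ranges over an infinite group.)

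The genuine gap is in your last paragraph, where you assert that establishing the degree-$d$ analogue of the length bound ``is essentially the same combinatorial computation already performed in the minimum-degree case'' and only requires checking that the argument ``does not secretly rely on minimality.'' It does rely on minimality, essentially. In the proof of Theorem \ref{main}, the parallel cancellation produced by Proposition \ref{findingparallels} is \emph{not} shown to preserve the degree; one only knows from Lemma \ref{degreedecreases} that the degree cannot increase, and it cannot decrease because $d_{min}$ is already minimal --- that is the entire reason the shortened equation still has degree $d_{min}$. For an arbitrary fixed $d$ this crutch is gone: the cancellation found by the simple pigeonhole of Proposition \ref{findingparallels} may excise segments crossing $\core{\gen{g}}$ and strictly drop the degree, giving an equation of the wrong degree. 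What the paper actually needs is the stronger Proposition \ref{findingparallels2}: a refined pigeonhole (restricting to the part of $I_l$ whose reduction-partners avoid $\bcore{\gen{g}}$, finding a long component there, and matching quintuples that record the component of the partner edge) which guarantees two parallel couples whose excised intervals map entirely into $\core{H}$, so that Lemma \ref{degree} shows the move is degree-preserving. This is a genuinely different argument with a correspondingly larger bound ($32L^4d^2+16L^3d$ instead of $16L^2d$), and without it your reduction to a bounded-length equation of degree exactly $d$ does not go through.
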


One of the interesting features of the algorithm is that the ``parallel cancellation moves'' of Lemma \ref{parallelcancellation} have inverses, namely the ``parallel insertion moves'' which we introduce in Lemma \ref{parallelinsertion}. This means that the arbitrary (cyclically reduced) equation of degree $d$, can be obtained from a short equation of degree $d$ by means of a finite number of insertion moves; this gives a characterization of all the equations of degree $d$ in terms of a finite number of short equations (see Theorem \ref{main4} for the details).

\

Next, we study the set $D_g=\{d\in\bN : \fI_g$ contains a non-trivial equation of degree $d\}$. We prove that $D_g$ coincides with either $\bN$ or $2\bN$ (the set of non-negative even numbers), up to a finite set. We provide an algorithm that, given $H$ and $g$, computes the set $D_g$.

\begin{introthm}[See Theorem \ref{degreeset}]\label{introdegreeset}
Exactly one of the following possibilities takes place:

(i) $D_g$ contains an odd number and $\bN\setminus D_g$ is finite.

(ii) $D_g$ contains only even numbers and $2\bN\setminus D_g$ is finite.
\end{introthm}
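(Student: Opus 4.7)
The plan is to proceed in four stages. First, I would set up a parity dichotomy via the exponent-sum homomorphism $\sigma:H*\gen{x}\rar\bZ$ that kills $H$ and sends $x$ to $1$. For a cyclically reduced word $w$ with $p$ positive and $n$ negative occurrences of $x$ we have $\deg(w)=p+n$ and $\sigma(w)=p-n$, so $\deg(w)\equiv\sigma(w)\pmod 2$. Since $\sigma(\fI_g)$ is a subgroup of $\bZ$, it equals $m\bZ$ for some integer $m\ge 0$. If $m$ is odd, $\fI_g$ contains an element of odd exponent sum, hence of odd degree, and we fall in case (i); if $m$ is even (possibly zero), every element of $\fI_g$ has even degree, so $D_g\subseteq 2\bN$ and we fall in case (ii).

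Second, I would show that $D_g$ is closed under addition. Given cyclically reduced $w_1,w_2\in\fI_g$ of degrees $d_1,d_2\ge 1$, each $w_i$ must contain both an $x$-letter and an $H$-letter, since $\fI_g\cap\gen{x}=\{1\}$ and $\fI_g\cap H=\{1\}$ imply that a non-trivial cyclically reduced element of $\fI_g$ is neither a pure $x$-power nor a single $H$-letter. Because consecutive letters of a reduced word in $H*\gen{x}$ cannot both lie in $H$, some cyclic position of each $w_i$ sees an $H$-letter immediately followed by an $x$-letter. Replacing $w_i$ by a cyclic conjugate (still in $\fI_g$), I arrange that both $w_1$ and $w_2$ begin with an $x$-letter and end with an $H$-letter. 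The product $w_1w_2\in\fI_g$ then has no free cancellation at the juncture, and its first and last letters (of different types) are not inverses, so it is cyclically reduced of degree $d_1+d_2$, showing $d_1+d_2\in D_g$.

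Third---and this is the central step---I would show $\gcd(D_g)\le 2$. For any non-trivial cyclically reduced $w\in\fI_g$ of degree $d\ge 1$, the commutator $[w,x]=w\cdot(xw^{-1}x^{-1})\in\fI_g$ is non-trivial (the centraliser of $x$ in $H*\gen{x}$ is $\gen{x}$, and no non-trivial element of $\fI_g$ lies in $\gen{x}$) and has exponent sum zero, hence even cyclically reduced degree $d'\le 2d$. A careful analysis of the cyclic cancellations in $wxw^{-1}x^{-1}$---possibly after replacing $x$ by $xh$ for a suitable $h\in H$, iterating the commutator, or choosing $w$ optimally among all minimum-degree equations using the structural description from Theorems \ref{main} and \ref{main4}---would produce an element of even degree $d'$ with $\gcd(d,d')\le 2$. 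Together with the parity from the first stage and the semigroup property from the second, this forces $\gcd(D_g)=1$ in case (i) (the gcd divides an odd element and is $\le 2$) and $\gcd(D_g)=2$ in case (ii) (the gcd is even and $\le 2$). Finally, any sub-semigroup of $\bN_{\ge 1}$ with gcd $k$ contains every sufficiently large multiple of $k$ (a standard consequence of the Sylvester--Frobenius theorem), so $D_g$ is cofinite in $\bN$ in case (i) and in $2\bN$ in case (ii).

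The main obstacle is the third stage. Although $[w,x]$ is visibly a non-trivial even-degree element of $\fI_g$, controlling its cyclically reduced degree demands careful combinatorial bookkeeping: in pathological configurations such as $w=xh_1xh_2\cdots xh_k$, the naive commutator may reduce only to degree roughly $2d$, yielding $\gcd(d,\deg[w,x])$ as large as $d$. A refinement of the construction---choosing $w$ with many aligned same-sign $x$-letters to maximise cancellation, using the ``parallel insertion'' classification of Theorem \ref{main4} to exhibit equations of coprime degrees, or appealing to the Stallings folding tools developed in Section \ref{SectionStallings}---is what is needed to guarantee the gcd bound in all cases.
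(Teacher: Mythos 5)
Your stages 1, 2 and 4 are fine (stage 2, the concatenation of cyclic conjugates arranged as ``$x$-letter first, $H$-letter last'', is essentially the $k=0$ case of the paper's Lemma \ref{obtainingdegrees}), but the proof has a genuine gap exactly where you flag it: stage 3 is not an argument, it is a wish. The claim $\gcd(D_g)\le 2$ is the entire content of the theorem beyond the trivial parity dichotomy, and the commutator device does not deliver it: as your own example $w=xh_1xh_2\cdots xh_k$ shows, $[w,x]$ cyclically reduces to degree exactly $2d$, so $\gcd(d,\deg[w,x])=d$, and none of the suggested refinements (replacing $x$ by $xh$, iterating, optimising $w$ via Theorem \ref{main4}) is carried out or obviously works. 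So as written the proposal proves only that $D_g$ is an additive sub-semigroup, which says nothing about cofiniteness in $\bN$ or $2\bN$ (e.g.\ it is consistent with $D_g\subseteq 3\bN$ as far as your argument goes, in the all-even-ruled-out case).

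The missing idea is much simpler than a gcd analysis, and it is the paper's: before concatenating, conjugate the second equation by $x^k$. In Lemma \ref{obtainingdegrees} one takes $w$ of degree $d$, $w'$ of degree $d'$ written cyclically reduced with last $x$-block of positive exponent, and forms $w''=\ol{h}wh\,\ol{x}^kw'x^k$ with $h\in H\setminus\{1,c_1\}$; no cancellation occurs, so $w''\in\fI_g$ is cyclically reduced of degree $d+d'+2k$ for \emph{every} $k\ge 0$. This ``$+2k$ pumping'' is a one-line modification of your own stage-2 concatenation (insert $\ol{x}^k\cdots x^k$ around the second factor, arranging its last block to be a positive power of $x$ so the wrap-around stays reduced), and it makes stage 3 unnecessary: from any $d\in D_g$ you get every even number $\ge 2d$, and if some odd $d\in D_g$ exists you then combine it with the even number $2d$ to get every odd number $\ge 3d$; hence $\bN\setminus D_g$, respectively $2\bN\setminus D_g$, is finite. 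To repair your write-up, replace stage 3 by this observation (or prove the $+2$ step some other way); until then the central step is unproven.
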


\begin{introthm}[See Theorem \ref{algorithm4}]
Given $H\sgr F_n$ finitely generated and $g\in F_n$ that depends on $H$, there is an algorithm that:

(a) Determines whether we fall into case (i) or (ii) of Theorem \ref{introdegreeset}.

(b) Computes the finite set $\bN\setminus D_g$ or $2\bN\setminus D_g$ respectively.
\end{introthm}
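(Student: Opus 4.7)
The plan combines the decision algorithm of Theorem \ref{introthm2} (testing individual degrees $d\in D_g$) with the finite normal generating set $\{r_1,\ldots,r_\ell\}$ of $\fI_g$ produced in Section \ref{SectionStallings}, plus a small amount of numerical semigroup theory. The two parts of the statement are handled separately: part (a) by a parity argument on the normal generators, part (b) by a Sylvester--Frobenius style bound applied to elements of $D_g$ found via Theorem \ref{introthm2}.

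For part (a), I would consider the homomorphism $\epsilon\colon H*\gen{x}\to\bZ/2\bZ$ sending every element of $H$ to $0$ and $x$ to $1$. A direct check on normal forms shows that $\epsilon(w)$ equals the parity of the degree of $w$ for every $w\in H*\gen{x}$ (the signed exponent sum $\sum k_i$ and the unsigned cyclically reduced sum $\sum|k_i|$ have the same parity, and cyclic reduction only affects $H$-syllables or merges $x$-syllables). Since $\bZ/2\bZ$ is abelian, $\epsilon$ factors through the abelianization, so
$$\epsilon(\fI_g)=\epsilon(\ggen{r_1,\ldots,r_\ell})=\gen{\epsilon(r_1),\ldots,\epsilon(r_\ell)}\le\bZ/2\bZ.$$
We are therefore in case (i) of Theorem \ref{introdegreeset} iff some $r_i$ has odd degree, and in case (ii) otherwise. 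This is checked by a finite computation.

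For part (b), the key observation is that $D_g$ is a numerical subsemigroup of $\bN$: given cyclically reduced $w_1,w_2\in\fI_g$ of positive degrees, after cyclically conjugating each so that it begins and ends with an $x$-syllable, the word $w_1\cdot h\,w_2\,h^{-1}$, for any $h\in H\setminus\{1\}$, lies in $\fI_g$ by normality and is cyclically reduced of degree $\deg(w_1)+\deg(w_2)$ (no cancellation occurs, since $x$-syllables and $H$-syllables meet only across interfaces of opposite type). Combined with Theorem \ref{introdegreeset}, $D_g$ has gcd $k\in\{1,2\}$, where $k$ is read off from part (a), and $D_g$ is cofinite in $k\bN$. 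The algorithm now enumerates $d=1,2,3,\ldots$, applying Theorem \ref{introthm2} to each and updating the running gcd of the elements collected; by cofiniteness, this gcd must drop to $k$ after finitely many steps. Once $d_1,\ldots,d_j\in D_g$ with $\gcd(d_1,\ldots,d_j)=k$ have been found, the classical Sylvester--Frobenius bound (applied to $d_i/k$) gives a computable integer $M$ such that every multiple of $k$ above $M$ lies in $\gen{d_1,\ldots,d_j}\subseteq D_g$. The finite exceptional set is then pinned down by finitely many further calls to Theorem \ref{introthm2} on $d\le M$.

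The main conceptual hurdle is the passage from the structural dichotomy of Theorem \ref{introdegreeset} to an effective termination bound; the semigroup closure above provides precisely this link, with the running-gcd argument replacing any need to read an explicit bound out of the proof of Theorem \ref{introdegreeset}. The degenerate case $H=\{1\}$ must be handled separately (there $\fI_g$ is the kernel of the map $\gen{x}\to F_n$, $x\mapsto g$, which is trivial unless $g=1$), but it is immediate; and in the generic case the argument only requires that $H$ contains a single nontrivial element to serve as a spacer, which is guaranteed whenever $g$ depends on $H$ nontrivially.
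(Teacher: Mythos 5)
Your proposal is correct in substance, and part (a) is exactly the paper's argument: the parity homomorphism $H*\gen{x}\rar\bZ/2\bZ$ applied to a finite normal generating set of $\fI_g$ is precisely Lemma \ref{evendegree}, used the same way in the proof of Theorem \ref{algorithm4}. For part (b) you take a genuinely different route to an effective bound. The paper extracts an explicit bound directly from a single normal generator: in case (i) it takes a generator $w_i$ of odd degree $d_i$ and, rerunning the proof of Theorem \ref{degreeset} via Lemma \ref{obtainingdegrees} ($d,d'\in D_g\Rightarrow d+d'+2k\in D_g$), concludes $\bN\setminus D_g\subseteq\{1,\dots,3d_i\}$, then calls Corollary \ref{algorithm3} only on that explicit finite range (and analogously in case (ii)). You instead use only additive closure of $D_g$, a running-gcd search driven by repeated calls to the degree-$d$ decision algorithm, and a Sylvester--Frobenius bound once elements of gcd $k$ have been found; cofiniteness from Theorem \ref{degreeset} guarantees termination. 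Your version needs an a priori unbounded (though provably terminating) search phase and more oracle calls, but it avoids reading a quantitative bound out of the proof of the dichotomy; the paper's version is shorter and gives the bound immediately from data already computed by the Stallings-folding algorithm.

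One small repair is needed in your semigroup lemma: a cyclically reduced equation of positive degree cannot in general be cyclically conjugated so as to both begin and end with an $x$-syllable (e.g.\ $xh$), so ``for any $h\in H\setminus\{1\}$'' is not quite right as stated. The fix is the one the paper itself uses in Lemma \ref{obtainingdegrees}: normalize each factor to start with an $x$-syllable and choose the spacer $h$ in $H$ avoiding the inverse of the boundary $H$-syllables (finitely many exclusions, possible since $H$ is a nontrivial free group), so that no cancellation or syllable collapse occurs at the junctions and the product is cyclically reduced of degree $\deg(w_1)+\deg(w_2)$. With that adjustment, and your separate treatment of $H=\{1\}$, the argument goes through.
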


\

In Section \ref{SectionExamples}, we make use of the tools developed in the rest of the paper in order to work out explicit computations in some specific cases; in each case we compute the minimum degree $d_{min}$ for an equation in $\fI_g$ and the set $D_g$ of possible degrees. In example \ref{examplecyclic} we deal with the case where $\rank{H}=1$, showing that in this case $d_{min}$ is either $1$ or $2$. Examples \ref{example46} and \ref{example23} show that both cases of Theorem \ref{introdegreeset} can occur. One may be tempted to conjecture that the equations of $\fI_g$ of minimum possible degree $d_{min}$ are enough to generate the ideal $\fI_g$; we give counterexamples to this (see Examples \ref{example23} and \ref{example234}), showing that the ideal $\fI_g$ is not always generated by just the equations of degree $d_{min}$.

\

In a subsequent paper we will further investigate the properties of the ideal $\fI_g$.

\section*{Acknowledgements}

I would like to thank my supervisor Martin R. Bridson for useful comments and suggestions while working on the present paper.

\section{Preliminaries and notations}\label{Preliminaries}


With the word \textbf{graph} we mean a $1$-dimensional CW complex. We allow for multiple edges between the same pair of vertices, and we allow for edges from a vertex to itself. For a graph $G$ we denote with $V=V(G)$ the $0$-skeleton of $G$, and each point of $V$ is called \textit{vertex}; each connected component of $G\setminus V$ is called \textit{open edge} and its closure is called an \textit{edge}. A \textbf{combinatorial map} $f:G\rar G'$ between graphs is a continuous map which sends each vertex of $G$ to a vertex of $G'$, and each open edge of $G$ homeomorphically onto an open edge of $G'$.

For $l\ge1$ we define $I_l$  to be the graph obtained from a subdivision of the unit interval $[0,1]$ into $l$ arcs. More precisely, $I_l$ has $l+1$ vertices at $\frac{i}{l}$ for $i=0,...,l$, and $l$ edges given by closed intervals.

For $l\ge1$ we define $C_l$ to be the graph obtained from a subdivision of the unit circle $\{(x,y) : x^2+y^2=1\}\subseteq\bR^2$ into $l$ arcs. More precisely $C_l$ has $l$ vertices at the points $(\cos(\frac{2\pi i}{l}),\sin(\frac{2\pi i}{l}))$ for $i=0,...,l-1$, and $l$ edges given by closed arcs on the unit circle.

Let $G$ be a graph. A \textbf{combinatorial path} in $G$ is a combinatorial map $\sigma:I_l\rar G$ for some $l\ge1$; a \textbf{combinatorial loop} in $G$ is a combinatorial map $\sigma:C_l\rar G$ for some $l\ge1$. We say a combinatorial path (resp. loop) is \textbf{reduced} if it is locally injective. The local injectivity has to be checked only at the vertices of $I_l$ or $C_l$: in the interior of the edges, every combinatorial path/loop is locally injective by definition. We say that a combinatorial path $\sigma:I_l\rar G$ with $\sigma(0)=\sigma(1)$ is \textbf{cyclically reduced} if it is reduced when seen as a combinatorial loop.

\begin{figure}[h!]
\centering
\begin{tikzpicture}
%

\node (1b) at (-8,0) {.};
\node (2b) at (-7,0) {.};
\node (3b) at (-6,0) {.};
\node (4b) at (-5,0) {.};
\node (5b) at (-4,0) {.};
\node (6b) at (-3,0) {.};
\draw (1b) to (2b);
\draw (2b) to (3b);
\draw (3b) to (4b);
\draw (4b) to (5b);
\draw (5b) to (6b);
\end{tikzpicture}
\caption{The graph $I_5$.}
\label{clil}
\end{figure}
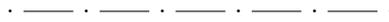

We are going to need the notion of core graph and of pointed core graph. Whenever we consider a graph $G$ with a basepoint, we always mean the basepoint to be a vertex.

\begin{mydef}
Let $G$ be a connected graph which is not a tree. Define its \textbf{core graph} $\core{G}$ as the subgraph given by the union of (the images of) all the reduced loops.
\end{mydef}

\begin{myobs}
Notice that $\core{G}$ is connected and every vertex has valence at least $2$.
\end{myobs}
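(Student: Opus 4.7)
The observation has two parts: valence at least $2$, and connectedness. For the valence part, the idea is immediate from the definition of a reduced loop: if $v \in \core{G}$ lies on a reduced loop $\sigma : C_l \to G$, pick any preimage $p$ of $v$ in $C_l$; the two edges of $C_l$ at $p$ map under $\sigma$ to two distinct edges of $G$ at $v$ (by local injectivity of $\sigma$ at $p$), both of which lie in $\im{\sigma} \subseteq \core{G}$. So $v$ has valence at least $2$ in $\core{G}$.

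For connectedness, take $v_1, v_2 \in \core{G}$ and fix reduced loops $\sigma_1, \sigma_2$ through them, cyclically rotated to be based at $v_1, v_2$ respectively. Since $G$ is connected, pick a reduced combinatorial path $\gamma$ from $v_1$ to $v_2$. For integers $k, m \geq 1$ I form the loop $L_{k,m} = \sigma_1^k \cdot \gamma \cdot \sigma_2^m \cdot \gamma^{-1}$ based at $v_1$, with cyclic reduction $L_{k,m}^{\ast}$. The plan is to choose $(k, m)$ so that $L_{k,m}^{\ast}$ is a non-trivial reduced loop whose image meets both $\im{\sigma_1}$ and $\im{\sigma_2}$; since $\im{L_{k,m}^{\ast}}$ is a connected subgraph of $\core{G}$, this places $v_1, v_2$ in the same connected component.

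The main step is a cancellation analysis. Each of the four pieces is already reduced, so cyclic cancellation of $L_{k,m}$ acts only at the four cyclic junctions, with the amount at each junction bounded by the length of the shorter adjacent side. If cancellation ever cascades through a fully consumed $\gamma$ (or $\gamma^{-1}$) to bring $\sigma_1^k$ and $\sigma_2^m$ into direct contact by at least one matching edge, then $\sigma_1$ and $\sigma_2$ already share an edge of $G$, so $\im{\sigma_1} \cap \im{\sigma_2} \neq \emptyset$ and the conclusion is immediate. Otherwise, the total cancellation into $\sigma_1^k$ (respectively $\sigma_2^m$) is bounded by $2|\gamma|$, and the choice $k l_1,\, m l_2 > 2|\gamma|$ (where $l_i$ is the length of $\sigma_i$) ensures that edges of both $\sigma_1$ and $\sigma_2$ survive in $L_{k,m}^{\ast}$. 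For non-triviality of $L_{k,m}^{\ast}$: triviality amounts to $\sigma_1^k = (\gamma \sigma_2^{-1} \gamma^{-1})^m$ in the free group $\pi_1(G, v_1)$, and by uniqueness of roots in free groups the set of $(k, m)$ where this holds lies on at most a single discrete line, so I can pick $(k, m)$ both large and off this locus.

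The main obstacle will be the careful bookkeeping in the cancellation analysis, in particular giving a clean treatment of the cascade case that works symmetrically at all four junctions of $L_{k,m}$; everything else is essentially routine.
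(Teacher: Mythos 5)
The paper gives no proof of this observation at all---it is asserted as immediate---so there is no argument of the paper to compare against; let me assess yours on its own. The valence half is fine, up to one small imprecision: local injectivity of $\sigma$ at $p$ gives two distinct edge-germs at $v$, not necessarily two distinct edges of $G$ (the image edge could be a loop at $v$), but a loop contributes $2$ to the valence, so the conclusion stands. The connectedness half is correct in outline and can be completed, but the cancellation bookkeeping as you state it is not literally accurate: in the cyclic reduction of $\sigma_1^k\cdot\gamma\cdot\sigma_2^m\cdot\gamma^{-1}$ it is not true a priori that cancellation is confined to the four junctions with the amount bounded by the shorter adjacent side. Besides the cascade through a fully consumed $\gamma^{\pm1}$, which you do treat, two letters of the $\sigma_1^k$ block (or of the $\sigma_2^m$ block) could in principle cancel against each other around the far side of the circle. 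The clean way to finish is via the non-crossing pairing of cancelled letters: if two $\sigma_1$-letters pair, the arc between them inside the block cannot be fully cancelled (a nonempty subword of a reduced word is reduced), so the complementary arc is, and with $kl_1,\,ml_2>2|\gamma|$ and no $\sigma_1$--$\sigma_2$ pairings this cascades to the whole cyclic word cancelling, i.e.\ $L_{k,m}=1$; so your non-triviality condition is not a side remark but exactly what excludes this case. With that case handled, your dichotomy (either a shared edge of $\im{\sigma_1}\cap\im{\sigma_2}$, or surviving edges of both loops inside the reduced loop $L_{k,m}^{\ast}$) does go through.

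That said, the whole power-and-cancellation apparatus is avoidable. The standard route: if $\im{\sigma_1}\cap\im{\sigma_2}\neq\emptyset$ we are done; otherwise choose $\gamma$ to be a shortest path in $G$ from the subgraph $\im{\sigma_1}$ to the subgraph $\im{\sigma_2}$, based at vertices $p_1,p_2$ of the two images. Minimality forces every interior vertex of $\gamma$ to avoid both images, hence the first edge of $\gamma$ is not an edge of $\im{\sigma_1}$ and the last is not an edge of $\im{\sigma_2}$; consequently the single loop $\sigma_1\cdot\gamma\cdot\sigma_2\cdot\gamma^{-1}$ (with $\sigma_1,\sigma_2$ based at $p_1,p_2$) is already reduced as a loop, with no powers, no cancellation analysis, and no appeal to uniqueness of roots. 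Its image is then a connected subgraph of $\core{G}$ containing $\im{\sigma_1}\cup\im{\gamma}\cup\im{\sigma_2}$, which gives connectedness and in fact the slightly stronger statement that any two points of $\core{G}$ lie on a common reduced loop. Your approach buys nothing extra here, at the cost of the delicate bookkeeping you yourself flag; I would replace it by the shortest-path argument.
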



\begin{mydef}
Let $(G,*)$ be a connected pointed graph which is not a tree. Define its \textbf{pointed core graph} $\bcore{G}$ as the subgraph given by the union of (the images of) all the reduced paths from the basepoint to itself.
\end{mydef}

\begin{myobs}
For a pointed graph $(G,*)$, there is a unique shortest path $\sigma$ (either trivial or embedded) connecting the basepoint to $\core{G}$; the graph $\bcore{G}$ consists exactly of the union $\core{G}\cup\im{\sigma}$.
\end{myobs}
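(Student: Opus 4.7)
The plan relies on the standard structural decomposition of $G$: since $\core{G}$ is obtained from $G$ by iteratively pruning vertices of valence $\le 1$, the complement $G\setminus\core{G}$ is a disjoint union of open pendant subtrees, each attached to $\core{G}$ at a single vertex. If $*\in\core{G}$ then $\sigma$ is the trivial path; otherwise $*$ lies in a unique such pendant subtree $T$, with a unique attachment vertex $v\in\core{G}$, and $\sigma$ is the unique reduced (hence embedded) path from $*$ to $v$ in $T$. Since any path from $*$ to $\core{G}$ in $G$ must exit $T$ through $v$, uniqueness of reduced paths in $T$ gives both the existence and the uniqueness of the shortest such path.

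To establish $\bcore{G}=\core{G}\cup\im{\sigma}$, I would prove both inclusions. For $(\subseteq)$, let $\tau$ be a reduced path from $*$ to $*$; any excursion of $\tau$ into a pendant subtree $T'$ must enter and exit through its unique attachment vertex, and reduced paths in a tree from a vertex back to itself are trivial, so no such excursion is possible except into the pendant subtree $T$ containing $*$ (which is only relevant when $*\notin\core{G}$). In that case, uniqueness of reduced paths in $T$ forces the initial and terminal segments of $\tau$ to be $\sigma$ and $\sigma^{-1}$ respectively, and the middle segment lies in $\core{G}$; either way, $\im{\tau}\subseteq\im{\sigma}\cup\core{G}$.

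For $(\supseteq)$, given $p\in\core{G}\cup\im{\sigma}$, I would construct a reduced loop at $*$ of the form $\sigma\cdot\gamma\cdot\sigma^{-1}$ through $p$, where $\gamma$ is a reduced loop at $v$ in $\core{G}$. Reducedness at the two copies of $v$ in the concatenation is automatic since edges of $\sigma$ lie in $T$ while edges of $\gamma$ lie in $\core{G}$, hence are automatically distinct. The substantive step is producing $\gamma$: if $p\in\im{\sigma}$, any reduced loop at $v$ in $\core{G}$ works; if $p\in\core{G}$, take any reduced loop $\gamma_0$ through $p$ in $\core{G}$ together with a reduced path $\alpha$ in $\core{G}$ from $v$ to a vertex of $\gamma_0$, and truncate $\alpha$ at the first vertex where it meets $\gamma_0$. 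The truncated path $\alpha'$ has a final edge lying outside $\gamma_0$, so rebasing $\gamma_0$ at that vertex yields a reduced loop $\alpha'\cdot\gamma_0\cdot(\alpha')^{-1}$ at $v$ passing through $p$.

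The main obstacle is this last construction---ensuring one can produce a reduced loop based at $v$ through any prescribed point of $\core{G}$---since once that is in hand, all remaining reducedness checks at attachment vertices are immediate from the disjointness of pendant-tree edges and core edges.
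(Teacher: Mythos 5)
Your argument is correct; note that the paper states this as an unproved Observation, so there is no proof of record to compare against, and your route (the pendant-tree decomposition of $G$ relative to $\core{G}$, plus the $\sigma\cdot\gamma\cdot\sigma^{-1}$ construction for the reverse inclusion) is the natural one and all the reducedness checks you flag do go through. One small caution: you justify the decomposition by saying $\core{G}$ is obtained by iteratively pruning valence-$\le 1$ vertices, which is not the paper's definition and is delicate for infinite graphs, to which this Observation is also applied (e.g.\ $\cov{H}$ when $H$ has infinite index). It is cleaner, and just as short, to derive the two facts you actually use directly from the definition of $\core{G}$ as the union of images of reduced loops: a closure of a component of $G\setminus\core{G}$ contains no reduced loop (such a loop would lie in $\core{G}$ while using non-core edges), hence is a tree; and it cannot meet $\core{G}$ in two vertices $v_1\neq v_2$, since a reduced path through the component from $v_1$ to $v_2$ concatenated with a reduced path in $\core{G}$ from $v_2$ to $v_1$ is locally injective at the junctions (the incident edges lie on opposite sides) and would again be a reduced loop through non-core edges. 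With that substitution your proof is complete; the remaining steps, including the truncation argument producing a reduced loop at $v$ through any prescribed point of $\core{G}$, are sound.
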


We shall need to work explicitly with the following well-known construction. Let $T$ be a maximal tree contained in $G$, and let $E$ be the set of edges which are not contained in $T$; suppose we are given an orientation on each edge $e\in E$. For $e\in E$, there is a unique reduced path $\sigma_e$ in $G$ that starts at the basepoint, moves along $T$ to the initial vertex of $e$, crosses $e$ according to the orientation, and moves along $T$ from the final vertex of $e$ to the basepoint.

\begin{myprop}\label{fundamentalgroupgraph}
The fundamental group $\pi_1(G)$ is a free group with a basis given by the homotopy classes of the paths $\sigma_e$ for $e\in E$.
\end{myprop}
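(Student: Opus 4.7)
The plan is to reduce the statement to the fundamental group computation for a wedge of circles via the standard tree-collapse trick. Since $T$ is a tree it is contractible, so the quotient map $q:G\rar G/T$ is a homotopy equivalence and induces an isomorphism $q_*:\pi_1(G,*)\rar\pi_1(G/T,q(*))$. Because $T$ is a maximal tree, every vertex of $G$ lies in $T$, so in $G/T$ all vertices collapse to a single point and each edge $e\in E$ becomes a loop at that point. Hence $G/T$ is (combinatorially) a wedge of $|E|$ circles, one for each $e\in E$.

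Next I would invoke the fact that $\pi_1$ of a wedge of $k$ circles is the free group on $k$ generators, with basis given by the loops that traverse each circle once in the chosen orientation. This is standard (e.g. by Van Kampen), but if one wants a self-contained combinatorial argument in the spirit of the paper, it suffices to check that the obvious map from the free group on $E$ to $\pi_1(G/T)$ is both surjective (every combinatorial loop is a concatenation of oriented edges) and injective (a cyclically reduced word in the generators traces out a reduced combinatorial loop at the wedge point, and such a loop cannot be null-homotopic since there are no faces to fill any disk).

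Finally I would identify the basis element of $\pi_1(G,*)$ corresponding to $e\in E$. For each vertex $v$ of $G$ let $\tau_v$ denote the unique reduced path in $T$ from $*$ to $v$, and let $e$ go from $u$ to $v$. Then the loop $\tau_u\cdot e\cdot\tau_v^{-1}$ is exactly the reduced path $\sigma_e$ described in the statement, and its image under $q$ is the circle of $G/T$ associated with $e$. Since $q_*$ is an isomorphism and sends $[\sigma_e]$ to the chosen basis element of $\pi_1(G/T)$, the classes $[\sigma_e]$ form a basis of $\pi_1(G,*)$.

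The only genuinely non-trivial input is the identification of $\pi_1$ of a wedge of circles with a free group; the rest is bookkeeping about how the homotopy equivalence $q$ pulls back the obvious basis. In a paper of this style, this input is typically cited rather than reproved, so the main work is simply verifying that the particular loops $\sigma_e$ defined in the statement are the correct representatives under the collapse.
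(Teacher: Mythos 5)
Your proof is correct, but note that the paper itself gives no argument for this proposition: it is stated as a ``well-known construction'' with no proof environment attached, so there is nothing to compare line by line. Your tree-collapse argument is the standard one and is exactly the kind of justification the author is implicitly relying on: collapse the maximal tree $T$ (contractible, so $q:G\rar G/T$ is a homotopy equivalence), identify $G/T$ with a wedge of $|E|$ circles, quote that its fundamental group is free on the loops around the circles, and check that $q$ sends $[\sigma_e]$ to the loop of the circle coming from $e$. Two small points you could tighten. First, your parenthetical injectivity argument (``no faces to fill any disk'') is not a proof as stated; the clean elementary justification is that the universal cover of a graph is a tree, so a reduced nontrivial loop lifts to a nontrivial reduced path and cannot be nullhomotopic --- this is precisely the mechanism the paper uses later in its Proposition on uniqueness of reduced representatives, so citing that style of argument would fit better than the informal remark. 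Second, when you identify $\sigma_e$ with $\tau_u\cdot e\cdot\tau_v^{-1}$, the fact that this concatenation is reduced (as the statement's description of $\sigma_e$ requires) uses that $e\notin T$, so $e$ cannot cancel against the last edge of $\tau_u$ or the first edge of $\tau_v^{-1}$; this is worth one explicit sentence. With those two remarks your write-up is a complete and appropriate proof of the proposition.
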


\subsection{Reduction of paths}

Consider the graph $I_l$ and for an edge $s$ of $I_l$ denote with $\omeno{s},\opiu{s}$ the vertices of $s$. Since $I_l$ is a subdivision of the unit interval, we adopt the convention that $\omeno{s}<\opiu{s}$ as points of the unit interval.

Let $G$ be a graph and $\sigma:I_l\rar G$ be a combinatorial path. If $\sigma$ is not reduced, then we can find two consecutive edges $s,t$ of $I_l$ such that $\sigma$ sends $s,t$ to the same edge $e$ of $G$, but crossed with opposite orientations. Let's say we have $\opiu{s}=\omeno{t}$: we consider the interval $s\cup t=[\omeno{s},\opiu{t}]$ and we collapse it to a point. We obtain a graph isomorphic to $I_{l-2}$, and we can define a map $\sigma':I_{l-2}\rar G$ which is equal to $\sigma$, except on the collapsed interval, where we set it to be equal to $\sigma(\omeno{s})=\sigma(\opiu{t})$. The map $\sigma':I_{l-2}\rar G$ is a combinatorial path, and it is homotopic to $\sigma$ (relative to the endpoints). If the path $\sigma'$ is not yet reduced, then we can reiterate the same process. This motivates the following definition:

\begin{mydef}\label{reductionprocess}
Let $G$ be a graph and let $\sigma:I_l\rar G$ be a combinatorial path. A \textbf{reduction process} for $\sigma$ is a sequence $(s_1,t_1),...,(s_m,t_m)$ with the following properties:

(i) $s_1,t_1,...,s_m,t_m$ are pairwise distinct edges of $I_l$.

(ii) For every $k=1,...,m$ we have $\omeno{s_k}<\omeno{t_k}$.

(iii) For every $k=1,...,m$, if we collapse each of $s_1,t_1,...,s_{k-1},t_{k-1}$ to a point, in the quotient graph the edges $s_k,t_k$ are adjacent.

(iv) For every $k=1,...,m$ the map $\sigma$ sends $s_k,t_k$ to the same edge of $G$ crossed with opposite orientations.
\end{mydef}

Think of $(s_k,t_k)$ as the $k$-th cancellation to be performed on the path $\sigma$. Condition (iii) says that, after performing the first $k-1$ cancellations, the edges $s_k,t_k$ are adjacent, ready to be canceled against each other. Condition (iv) ensures that $\sigma$ sends $s_k,t_k$ to the same edge of $G$ but with opposite orientations, so that the cancellation actually makes sense. Condition (ii) is just a useful convention, saying that the edges $s_k,t_k$ appear in this order on the unit interval $I_l$.

\begin{mylemma}\label{lemmino1}
Let $G$ be a graph and let $\sigma:I_l\rar G$ be a combinatorial path, together with a reduction process $(s_1,t_1),...,(s_m,t_m)$. Then for every $1\le \alpha<\beta\le m$ the edges $s_\alpha,t_\alpha,s_\beta,t_\beta$ appear on the interval in one of these orders: $s_\alpha,t_\alpha,s_\beta,t_\beta$ or $s_\beta,t_\beta,s_\alpha,t_\alpha$ or $s_\beta,s_\alpha,t_\alpha,t_\beta$.
\end{mylemma}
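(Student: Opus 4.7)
The plan is to proceed by contradiction, by listing all possible orderings of the four distinct edges $s_\alpha, t_\alpha, s_\beta, t_\beta$ on $I_l$ and excluding the three that are not among those listed in the statement. By condition (ii) we have $o^-(s_\gamma) < o^-(t_\gamma)$ for $\gamma\in\{\alpha,\beta\}$, so within each pair the edge $s$ precedes the edge $t$. With this constraint and the pairwise distinctness coming from (i), there are exactly six possible orderings; three of them are precisely the configurations allowed in the statement, and the three I need to rule out are the two ``crossing'' patterns $s_\alpha, s_\beta, t_\alpha, t_\beta$ and $s_\beta, s_\alpha, t_\beta, t_\alpha$, together with the ``reverse-nested'' pattern $s_\alpha, s_\beta, t_\beta, t_\alpha$ in which the pair indexed by $\beta$ is nested strictly inside the pair indexed by $\alpha$.

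The common feature of these three forbidden orderings is that at least one edge $e\in\{s_\beta,t_\beta\}$ lies strictly between $s_\alpha$ and $t_\alpha$ on $I_l$: in each crossing case exactly one of $s_\beta,t_\beta$ is between them, while in the reverse-nested case both are. By condition (i), this edge $e$ is distinct from all of $s_1,t_1,\ldots,s_{\alpha-1},t_{\alpha-1}$, so $e$ is still present as an edge of the quotient graph obtained after collapsing each of those earlier edges to a point.

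To reach the contradiction with condition (iii), I want to use the observation that in such a quotient two uncollapsed edges $e',e''$ of $I_l$ are adjacent if and only if every edge strictly between them in $I_l$ has been collapsed. This is the only piece of bookkeeping in the argument, and it follows from the fact that on the tree $I_l$ the equivalence relation on vertices generated by the edge collapses simply merges each maximal collapsed subinterval to a single point, so that an uncollapsed edge separates its uncollapsed neighbours on either side. Granted this, since our $e$ is uncollapsed and strictly between $s_\alpha$ and $t_\alpha$, the edges $s_\alpha$ and $t_\alpha$ cannot be adjacent in the quotient at step $\alpha$, contradicting condition (iii). The main (very small) obstacle is making this adjacency description precise; the case analysis on orderings and the final contradiction are then essentially immediate.
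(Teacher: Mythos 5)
Your proof is correct and follows essentially the same route as the paper: the paper's (two-line) argument also observes that after collapsing $s_1,t_1,\ldots,s_{\alpha-1},t_{\alpha-1}$ the edges $s_\alpha,t_\alpha$ must become adjacent by condition (iii), so no uncollapsed edge --- in particular neither $s_\beta$ nor $t_\beta$, which by condition (i) and $\beta>\alpha$ are not among the collapsed ones --- can lie strictly between them. Your write-up merely makes explicit the enumeration of the six orderings and the adjacency criterion in the quotient of $I_l$, which the paper leaves implicit.
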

\begin{proof}
When we collapse $s_1,t_1,...,s_{\alpha-1},t_{\alpha-1}$ to a point, we have that $s_\alpha$ and $t_\alpha$ become adjacent. This means that $s_\beta,t_\beta$ can't both occur between $s_\alpha$ and $t_\alpha$. The conclusion follows.
\end{proof}
%

Let $\sigma:I_l\rar G$ be a path and let $(s_1,t_1),...,(s_m,t_m)$ be a reduction process for $\sigma$. If $2m<l$, then we can collapse each of the edges $s_1,t_1,...,s_m,t_m$ to a point in order to get a graph isomorphic to $I_{l-2m}$. We can define a continuous map $\sigma':I_{l-2m}\rar G$ which is equal to $\sigma$ on the edges which are not collapsed in the process. The map $\sigma'$ is a combinatorial path which is homotopic to $\sigma$ (relative to the endpoints), and it is called \textbf{residual path} of the cancellation process.

\begin{myprop}\label{extensionofprocess}
Let $\sigma:I_l\rar G$ be a combinatorial path and let $(s_1,t_1),...,(s_m,t_m)$ be a reduction process for $\sigma$. Then exactly one of the following holds:

(i) We have $2m=l$ and $\sigma(0)=\sigma(1)$ and $\sigma$ is nullhomotopic (relative to the endpoints).

(ii) We have $2m<l$ and the residual path $\sigma':I_{l-2m}\rar G$ is reduced.

(iii) There is a couple $(s_{m+1},t_{m+1})$ such that $(s_1,t_1),...,(s_m,t_m),(s_{m+1},t_{m+1})$ is a reduction process for $\sigma$.
\end{myprop}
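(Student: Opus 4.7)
The plan is to break into cases according to the relation between $2m$ and $l$. Since $s_1,t_1,\ldots,s_m,t_m$ are $2m$ pairwise distinct edges of $I_l$ (which has exactly $l$ edges), we always have $2m\le l$, so either $2m=l$ or $2m<l$. This partition will drive the case analysis, and will also make exclusivity of (i), (ii), (iii) transparent: case (i) is the regime $2m=l$, while (ii) and (iii) live in the regime $2m<l$ and are distinguished by whether the residual path is reduced.

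If $2m=l$, every edge of $I_l$ is collapsed, so the quotient graph is a single point and the map $\sigma$ is homotopic (rel endpoints) to a constant path. This already forces $\sigma(0)=\sigma(1)$ and $\sigma$ nullhomotopic rel endpoints, giving case (i).

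If $2m<l$, the quotient graph is isomorphic to $I_{l-2m}$ and the residual path $\sigma':I_{l-2m}\rar G$ is defined as in the discussion preceding the statement. If $\sigma'$ is reduced we are in case (ii). Otherwise, there exist two adjacent edges $\bar{s},\bar{t}$ of $I_{l-2m}$, meeting at a vertex where local injectivity fails, such that $\sigma'$ sends them to the same edge of $G$ with opposite orientations. These edges descend from a unique pair of edges of $I_l$ that survive all the collapses; call them $s_{m+1},t_{m+1}$, relabeled if necessary so that $\omeno{s_{m+1}}<\omeno{t_{m+1}}$ (condition (ii) of Definition \ref{reductionprocess} is thus a labeling convention). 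We now verify that $(s_1,t_1),\ldots,(s_m,t_m),(s_{m+1},t_{m+1})$ is a reduction process: condition (i) holds because $s_{m+1},t_{m+1}$ survive the first $m$ collapses, hence are not among $s_1,t_1,\ldots,s_m,t_m$; condition (ii) holds by our labeling; condition (iii) holds because the images of $s_{m+1},t_{m+1}$ in the quotient are precisely $\bar{s},\bar{t}$, which are adjacent in $I_{l-2m}$; and condition (iv) holds because $\sigma$ and $\sigma'$ agree on edges that are not collapsed. This produces case (iii).

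The most delicate point, and the only nontrivial step, is the lifting argument in the final case: one must be sure that an adjacent non-reduced pair in the quotient $I_{l-2m}$ really lifts to a pair of edges in $I_l$ satisfying all four conditions of Definition \ref{reductionprocess}. This is straightforward once one observes that the collapse map $I_l\rar I_{l-2m}$ is a quotient by an equivalence relation identifying each $s_k\cup t_k$ to a point, so non-collapsed edges of $I_l$ are in bijection with edges of $I_{l-2m}$ and inherit both the combinatorial structure and the labeling by $\sigma$. Exclusivity of the three cases is then immediate from the dichotomy $2m=l$ versus $2m<l$, together with the dichotomy ``$\sigma'$ reduced versus not reduced'' inside the latter regime.
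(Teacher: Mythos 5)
Your proposal is correct and follows essentially the same route as the paper: the heart of both arguments is to take an adjacent non-reduced pair in the residual path, lift it through the collapse map to a unique pair of surviving edges of $I_l$, and check the four conditions of Definition \ref{reductionprocess}. You additionally spell out the trivial case $2m=l$ and the mutual exclusivity of the three alternatives, which the paper leaves implicit; these add nothing essentially new but do no harm.
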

\begin{proof}
Suppose that $2m<l$ and that the residual path $\sigma':I_{l-2m}\rar G$ is not reduced. Then there are two adjacent edges $s',t'$ in $I_{l-2m}$ such that $\sigma'$ sends $s',t'$ to the same edge of $G$, crossed with opposite orientation; let's also assume $\omeno{s'}<\omeno{t'}$. The domain $I_{l-2m}$ of $\sigma'$ is a quotient of the domain $I_l$ of $\sigma$; thus we find unique edges $s_{m+1},t_{m+1}$ of $I_l$ such that the quotient sends $s_{m+1},t_{m+1}$ to $s',t'$ respectively; notice that $\omeno{s_{m+1}}<\omeno{t_{m+1}}$ and that $s_{m+1},t_{m+1}$ are distinct, and they are also distinct from $s_1,t_1,...,s_m,t_m$. From the definition of $\sigma'$, it is immediate to see that $\sigma$ sends $s_{m+1},t_{m+1}$ to the same edge of $G$, crossed with opposite orientation. It follows that $(s_1,t_1),...,(s_m,t_m),(s_{m+1},t_{m+1})$ is a reduction process for $\sigma$, as desired.
\end{proof}

The above proposition essentially says that a reduction process can be inductively extended, until we get a path which is either trivial or reduced. A reduction process $(s_1,t_1),...,(s_m,t_m)$ is called \textbf{maximal} if it can't be extended by adding a couple of edges $(s_{m+1},t_{m+1})$, i.e. if it falls into case (i) or (ii) of Proposition \ref{extensionofprocess}. Of course every path admits at least one maximal reduction process. Despite the maximal reduction process not being unique in general, it turns out that the residual path is unique, as shown in the following proposition:

\begin{myprop}\label{reducedpath}
Let $[\sigma]$ be a non-trivial homotopy class of paths $\sigma:[0,1]\rar G$ (relative to their endpoints). Then the homotopy class contains a unique reduced path $\ol{\sigma}:I_r\rar G$. Moreover, for every combinatorial path $\sigma':I_l\rar G$ and for every maximal reduction process $(s_1,t_1),...,(s_m,t_m)$ for $\sigma'$, we have $l-2m=r$ and the residual path coincides with $\ol\sigma$.
\end{myprop}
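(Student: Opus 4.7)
The plan is to prove both existence and uniqueness by lifting to the universal cover, where reduced combinatorial paths between two given vertices are unique.

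\emph{Existence.} Starting from any combinatorial representative $\sigma':I_l\to G$ of $[\sigma]$, I iteratively apply Proposition \ref{extensionofprocess} to extend a reduction process one pair at a time. The iteration must terminate since each step uses two new edges of $I_l$, so $2m\le l$. Because $[\sigma]$ is non-trivial, case (i) of Proposition \ref{extensionofprocess} (which would make $\sigma'$ nullhomotopic) cannot occur; hence the process stops in case (ii), producing a maximal reduction process whose residual $\ol{\sigma}:I_{l-2m}\to G$ is reduced and homotopic to $\sigma'$.

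\emph{Uniqueness and residual claim.} Pass to the universal cover $p:\tilde G\to G$, which is a tree. Fix a lift $\tilde v$ of $\sigma(0)$. Every combinatorial path in $[\sigma]$ starting at $\sigma(0)$ lifts uniquely to a combinatorial path in $\tilde G$ starting at $\tilde v$, and since homotopies rel endpoints also lift, all such lifts share a common terminal vertex $\tilde w$. The key technical fact I would establish is that any reduction process $(s_1,t_1),\ldots,(s_m,t_m)$ for $\sigma':I_l\to G$ is also a reduction process for its lift $\tilde\sigma':I_l\to\tilde G$, and the residual of $\tilde\sigma'$ projects under $p$ to the residual of $\sigma'$. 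The verification is that if $\sigma'$ sends two adjacent edges $s,t$ (in the appropriate iterated quotient of $I_l$) to the same edge of $G$ with opposite orientations, then continuity of $\tilde\sigma'$ at the shared vertex, together with the fact that combinatorial maps send each edge homeomorphically onto an edge, forces $\tilde\sigma'$ to send $s,t$ to the same edge of $\tilde G$ traversed in opposite directions; after collapsing this pair, the restricted lift is still the lift of the restricted projection, so the argument iterates.

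In the tree $\tilde G$ there is a unique reduced combinatorial path $\tilde\tau$ from $\tilde v$ to $\tilde w$; let $r$ denote its length. For any maximal reduction process of $\tilde\sigma'$, Proposition \ref{extensionofprocess} applied in $\tilde G$ implies the residual is reduced, and since reduced combinatorial paths between two fixed vertices of a tree are unique, this residual must coincide with $\tilde\tau$. Therefore $l-2m=r$ and the residual of $\sigma'$ equals $p\circ\tilde\tau$, a reduced path depending only on $[\sigma]$. This simultaneously gives uniqueness of $\ol\sigma$ in its homotopy class and its identification with the residual of any maximal reduction process of any combinatorial representative. The main obstacle I anticipate is the rigorous formulation of the lifting step: reduction processes are defined on iterated quotients of $I_l$, and one must verify inductively that the identification between the $k$-th step of the process for $\sigma'$ and the corresponding step for $\tilde\sigma'$ is preserved by the quotient, which ultimately reduces to the unique lifting property of coverings applied to combinatorial paths.
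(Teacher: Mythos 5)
Your proof is correct and takes essentially the same route as the paper: lift to the universal cover $\ot G$, which is a tree, and use the uniqueness of reduced combinatorial paths between two fixed vertices there. The only real difference is that you lift the reduction process itself to $\ot G$, whereas the paper handles the ``moreover'' clause more cheaply by using the facts, already recorded before the proposition, that the residual path is homotopic to $\sigma$ rel endpoints and (by maximality, via Proposition \ref{extensionofprocess}) reduced, so that uniqueness applies directly; your extra lifting step is sound but not needed.
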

\begin{proof}
Let $p:\ot G\rar G$ be the universal cover and choose a lifting $\tau:I_l\rar\ot G$ of the combinatorial path $\sigma$: we have that $\tau$ is a combinatorial path, connecting two distinct vertices $v_0=\tau(0)$ and $v_1=\tau(1)$ of $\ot G$. Let $\ol\sigma:I_r\rar G$ be any reduced path in the homotopy class of $\sigma$: then there is a unique lifting $\ol\tau:I_r\rar\ot G$ with $\ol\tau(0)=v_0$ and $\ol\tau(1)=v_1$, and this is a reduced path. But since $\ot G$ is a tree, there is a unique reduced path connecting $v_0$ and $v_1$. This means that $\ol\tau$ is uniquely determined by the homotopy class, and thus $\ol\sigma=p\circ\ol\tau$ is uniquely determined too. The conclusion follows.
\end{proof}
The following graphical representation of a reduction process will be useful. Let $\sigma:I_l\rar G$ be a combinatorial path and let $(s_1,t_1),...,(s_m,t_m)$ be a reduction process for $\sigma$. Consider $I_l$ as a subdivision of the unit interval $[0,1]\times\{0\}\subseteq\bR^2$. For each couple $(s_i,t_i)$, take a smooth path $r_i$ in the upper half-plane connecting the midpoint of $s_i$ to the midpoint of $t_i$. The paths $r_1,...,r_m$ can be taken to be pairwise disjoint, as in figure \ref{diagramreductionprocess}.

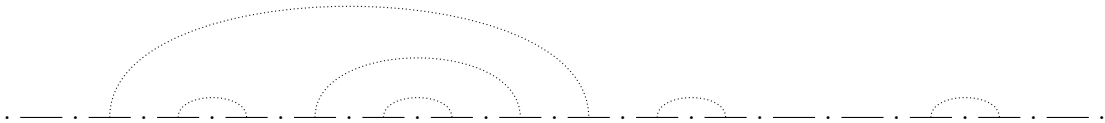
\begin{figure}[h!]
\centering
\begin{tikzpicture}[scale=0.9]
\node (0) at (0,0) {.};
\node (1) at (1,0) {.};
\node (2) at (2,0) {.};
\node (3) at (3,0) {.};
\node (4) at (4,0) {.};
\node (5) at (5,0) {.};
\node (6) at (6,0) {.};
\node (7) at (7,0) {.};
\node (8) at (8,0) {.};
\node (9) at (9,0) {.};
\node (10) at (10,0) {.};
\node (11) at (11,0) {.};
\node (12) at (12,0) {.};
\node (13) at (13,0) {.};
\node (14) at (14,0) {.};
\node (15) at (15,0) {.};
\node (16) at (16,0) {.};

\draw (0) to (1);
\draw (1) to (2);
\draw (2) to (3);
\draw (3) to (4);
\draw (4) to (5);
\draw (5) to (6);
\draw (6) to (7);
\draw (7) to (8);
\draw (8) to (9);
\draw (9) to (10);
\draw (10) to (11);
\draw (11) to (12);
\draw (12) to (13);
\draw (13) to (14);
\draw (14) to (15);
\draw (15) to (16);

\draw[out=90,in=90,looseness=0.8,densely dotted] (1.5,0) to (8.5,0);
\draw[out=90,in=90,looseness=1,densely dotted] (2.5,0) to (3.5,0);
\draw[out=90,in=90,looseness=1,densely dotted] (4.5,0) to (7.5,0);
\draw[out=90,in=90,looseness=1,densely dotted] (5.5,0) to (6.5,0);
\draw[out=90,in=90,looseness=1,densely dotted] (9.5,0) to (10.5,0);
\draw[out=90,in=90,looseness=1,densely dotted] (13.5,0) to (14.5,0);
\end{tikzpicture}
\caption{An example of a possible diagram for a reduction process.}
\label{diagramreductionprocess}
\end{figure}

\subsection{Labeled graphs}

We consider the finitely generated free group $F_n$ of rank $n$, generated by $a_1,...,a_n$. We write $\ol{a_i}=a_i^{-1}$. We denote with $R_n$ the standard $n$-rose, i.e. the graph with one vertex $*$ and $n$ oriented edges labeled $a_1,...,a_n$. The fundamental group $\pi_1(R_n,*)$ will be identified with $F_n$: the path going along the edge labeled $a_i$ (with the right orientation) corresponds to the element $a_i\in F_n$.

\begin{mydef}\label{grafo}
An \textbf{$\grafo$} is a graph $G$ together with a map $f:G\rar R_n$ sending each vertex of $G$ to the unique vertex of $R_n$, and each open edge of $G$ homeomorphically to one edge of $R_n$.
\end{mydef}

This means that every edge of $G$ is equipped with a label in $\{a_1,...,a_n\}$ and an orientation, according to which edge of $R_n$ it is mapped to; the map $f:G\rar R_n$ is called \textbf{labeling map} for $G$.

\begin{mydef}
Let $G_0,G_1$ be $\grafos$ with labeling maps $f_0,f_1$ respectively. A map $h:G_0\rar G_1$ is called \textbf{label-preserving} if $f_1\circ h=f_0$.
\end{mydef}

This means that the map $h$ sends each vertex to a vertex, and each open edge homeomorphically onto an edge with the same label and orientation. In particular $h$ is a combinatorial map.

\subsection{Core graph of a subgroup}

From the theory of covering spaces, we know that pointed covering spaces of $R_n$ are in bijection with subgroups of the fundamental group $\pi_1(R_n,*)=F_n$. Given a pointed covering space $p:(P,*)\rar(R_n,*)$, we have that the map $p_*:\pi_1(P,*)\rar F_n$ is injective, and thus $\pi_1(P,*)$ can be identified with its image $p_*(\pi_1(P,*))=H$, determining a subgroup $H\sgr F_n$. Conversely, given a subgroup $H\sgr F_n$, there is a unique pointed covering space $p:(P,*)\rar(R_n,*)$ such that $p_*(\pi_1(P,*))=H$: we define $(\cov{H},*)=(P,*)$ to be such covering space.

\begin{myrmk}
A covering space $p:(P,*)\rar(R_n,*)$ is in particular an $\grafo$.
\end{myrmk}

\begin{mydef}
Define the core graph $\core{H}$ and the pointed core graph $\bcore{H}$ to be the core and the pointed core of $(\cov{H},*)$, respectively.
\end{mydef}

Of course $\cov{H},\core{H},\bcore{H}$ are $\grafo$s, with labeling map given by the covering projection $p$, and by its restriction to the subgraphs $\core{H}$ and $\bcore{H}$ respectively. The labeling map $f:\bcore{H}\rar R_n$ gives a map $f_*:\pi_1(\bcore{H},*)\rar F_n$ which induces an isomorphism $f_*:\pi_1(\bcore{H},*)\rar H$.


We have that $H$ is finitely generated if and only if $\core{H}$ is finite (and if and only if $\bcore{H}$ is finite). In that case, $\core{H}$ and $\bcore{H}$ can be built algorithmically from a finite set of generators for $H$, see algorithm 5.4 
in \cite{Stallings}.

Given two finitely generated subgroups $H_1,H_2$, it is possible to algorithmically build the core graph $\core{H_1\cap H_2}$ of their intersection, see theorem 5.5 
in \cite{Stallings}. This also allows one to prove Howson's theorem, stating that the intersection of two finitely generated subgroups of a free group is finitely generated.

\section{Equations and Stallings folding}\label{SectionStallings}

Fix $H\sgr F_n$ finitely generated and $g\in F_n$ that depends on $H$. We here introduce an efficient way of computing a set of generators for the ideal $\fI_g\sgr H*\gen{x}$ as a normal subgroup. The technique is based on the classical Stallings folding operations; the novel aspect of what we do is that we focus on the non-rank-preserving folding operations, which are the ones responsible for the generators of the ideal, and we delay them until the end of the chain of folding operations. The same technique can be used more generally to compute a set of normal generators for the kernel of any homomorphism between free groups.

\subsection{Stallings folding}

We will assume that the reader has some confidence with the classical Stallings folding operation, for which I refer to \cite{Stallings}. I briefly recall the main properties that we are going to use.

Let $G$ be a finite connected $\grafo$ and suppose there are two distinct edges $e_1,e_2$ with endpoints $v,v_1$ and $v,v_2$ respectively. Suppose that $e_1$ and $e_2$ have the same label and orientation. We can identify $v_1$ with $v_2$, and $e_1$ with $e_2$: we then get a label-preserving quotient map of graphs $q:G\rar G'$.

\begin{mydef}
The quotient map $q:G\rar G'$ is called \textbf{Stallings folding}.
\end{mydef}

Given a finite connected $\grafo$ $G$, we can successively apply folding operations to $G$ in order to get a sequence $G=G^0\rar G^1\rar...\rar G^l$. Notice that the number of edges decreases by $1$ at each step, and thus the length of any such chain is bounded (by the number of the edges of $G$). The following proposition, although not explicitly stated in \cite{Stallings}, is a well-known consequence.

\begin{myprop}\label{folding}
Let $G$ be a finite connected $\grafo$ and let $G=G^0\rar G^1\rar...\rar G^m$ be a maximal sequence of folding operations. Also, fix a basepoint $*\in G$, inducing a basepoint $*\in G^i$ for $i=0,...,l$. Then we have the following:

(i) Each such sequence has the same length $m$ and the same final graph $G^m$.

(ii) Let $f^i:G^i\rar R_n$ be the labeling map. Then the image of $f^i_*:\pi_1(G^i,*)\rar\pi_1(R_n,*)$ is the same subgroup $H\sgr F_n$ for every $i=1,...,m$.

(iii) For every $i=1,...,m$ there is a unique label-preserving map of pointed graphs $h^i:G^i\rar\cov{H}$. The image $\im{h^i}$ is the same subgraph of $\cov{H}$ for every $i=1,...,m$.

(iv) The map $h^m$ is an embedding of $G^m$ as a subgraph of $\cov{H}$ and the subgraph $h^m(G^m)$ contains $\bcore{H}$. In particular $h^m_*:\pi_1(G^m,*)\rar\pi_1(\cov{H},*)$ is an isomorphism and the map $f^m_*:\pi_1(G^m,*)\rar F_n$ is injective.
\end{myprop}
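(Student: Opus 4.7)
The plan is to establish the items out of order: (ii) first, then (iii), then the main geometric content (iv), and finally (i) as a formal consequence.

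For (ii), each fold $q:G^i\to G^{i+1}$ satisfies $f^{i+1}\circ q=f^i$, and a direct application of Seifert--van Kampen (or an elementary check on generating loops) shows that $q_*$ is surjective on $\pi_1$ in both the rank-preserving and non-rank-preserving cases. Hence $\im{f^i_*}=\im{f^{i+1}_*}$, and by induction every $\im{f^i_*}$ equals $\im{f^0_*}=H$. For (iii), the lifting criterion for the covering $p:\cov{H}\to R_n$, applied to the pointed combinatorial map $f^i$ whose $\pi_1$-image lies in $H=p_*(\pi_1(\cov{H},*))$, yields a unique pointed lift $h^i:(G^i,*)\to(\cov{H},*)$ with $p\circ h^i=f^i$; this $h^i$ is automatically label-preserving. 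Uniqueness of the lift together with $f^{i+1}\circ q=f^i$ forces $h^i=h^{i+1}\circ q$, so $\im{h^i}=\im{h^{i+1}}$, establishing the stability claim.

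The main work is (iv). Maximality of the chain means that at each vertex of $G^m$ and each label-orientation pair, at most one incident edge carries that data; equivalently, $f^m$, and therefore its lift $h^m$, is a graph immersion (a combinatorial local homeomorphism). The restriction $h^m:G^m\to h^m(G^m)$ is then a surjective immersion of graphs, hence a covering map. On the other hand the inclusion $h^m(G^m)\hookrightarrow\cov{H}$ of a connected subgraph is $\pi_1$-injective, and by (ii) combined with $f^m_*=p_*\circ h^m_*$ the map $h^m_*$ is already surjective onto $\pi_1(\cov{H},*)=H$; so the covering $G^m\to h^m(G^m)$ is $\pi_1$-surjective and therefore a homeomorphism, proving that $h^m$ is an embedding. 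For $\bcore{H}\subseteq h^m(G^m)$: every reduced loop $\gamma$ at $*$ in $\cov{H}$ represents an element of $H$, which is realized by some loop $\gamma'$ in $G^m$; its reduction $\bar{\gamma}'$ has $h^m(\bar{\gamma}')$ reduced in $\cov{H}$ and homotopic to $\gamma$, so Proposition \ref{reducedpath} forces $h^m(\bar{\gamma}')=\gamma$ and hence $\gamma\subseteq h^m(G^m)$. This gives $\core{H}\subseteq h^m(G^m)$, and the initial segment of any such $\gamma$ supplies the connecting path, yielding all of $\bcore{H}$. The $\pi_1$-isomorphism is then immediate ($h^m$ is an embedding of connected graphs, so $\pi_1$-injective, and we already know $\pi_1$-surjectivity), and $f^m_*=p_*\circ h^m_*$ is then injective.

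Finally, for (i), if $G^m$ and $G^{m'}$ are the outputs of two maximal sequences then (iii) gives $h^m(G^m)=h^{m'}(G^{m'})$, while (iv) shows both $h^m$ and $h^{m'}$ are embeddings onto this common subgraph of $\cov{H}$, so $G^m\cong G^{m'}$ canonically; the length is then forced to be $|E(G)|-|E(G^m)|$, since each fold strictly decreases the edge count by exactly one. The main obstacle is the injectivity step in (iv): converting the purely local condition ``no fold is available'' into the global injectivity of $h^m$ requires the immersion-plus-$\pi_1$-surjectivity argument via covering space theory, and this is where the combinatorics of Stallings folding interacts essentially with the structure of the covering $\cov{H}\to R_n$.
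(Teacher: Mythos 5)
Your overall covering-space strategy is the standard one (the paper itself gives no proof of this proposition, citing it as a well-known consequence of Stallings' work), and most of the steps are fine: the surjectivity of folds on $\pi_1$ for (ii), the lifting-criterion argument and $h^i=h^{i+1}\circ q$ for (iii), the argument that $\bcore{H}\subseteq h^m(G^m)$ via uniqueness of reduced representatives, and the deduction of (i) from (iii) and (iv). The genuine gap is at the crux of (iv), in the sentence ``the restriction $h^m:G^m\rar h^m(G^m)$ is then a surjective immersion of graphs, hence a covering map.'' That implication is false in general: the interval $I_2$ with both edges labelled $a$ and oriented forward immerses onto the single $a$-labelled loop surjectively, but this is not a covering. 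Local injectivity says nothing about local surjectivity onto the star of a vertex of the image: an edge $e'$ of $h^m(G^m)$ incident to $y=h^m(x)$ is only known to be the image of an edge incident to \emph{some} preimage $x'$ of $y$, not necessarily to $x$; maximality of the folding sequence gives \emph{at most} one candidate edge at $x$ with the right label and orientation, never at least one. Since this is exactly the point where, as you yourself note, the local condition ``no fold is available'' must be converted into global injectivity of $h^m$, the argument as written begs the question: in this setting the restriction is a covering only because $h^m$ turns out to be injective.

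The standard repairs stay within your framework. One option is to prove injectivity of $h^m$ on vertices directly: if $h^m(x)=h^m(y)$, choose reduced paths $\gamma,\delta$ in $G^m$ from $*$ to $x$ and $y$ reading reduced words $w_\gamma,w_\delta$; covering space theory gives $w_\gamma w_\delta^{-1}\in H$, and since $f^m$ is an immersion its reduced word $u$ is read by a reduced path at $*$ in $G^m$; reducing the concatenation of that path with $\delta$ yields a reduced path from $*$ to $y$ reading $w_\gamma$, and in a folded graph a reduced word is read by at most one path starting at a given vertex, so $x=y$ (injectivity on edges then follows from foldedness). Alternatively, complete $G^m$ to a covering of $R_n$ by attaching trees at the vertices with missing label--orientation pairs; this changes neither connectivity nor $\pi_1$, so by uniqueness of pointed coverings the completion is $\cov{H}$, and uniqueness of the lift in (iii) identifies $h^m$ with the resulting inclusion $G^m\subseteq\cov{H}$. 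With either fix, the rest of your argument (one-sheetedness being unnecessary in the second route) goes through.
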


\begin{mydef}
Let $G$ be a finite connected $\grafo$. Define its \textbf{folded graph} $\fold{G}$ to be the $\grafo$ $G^m$ obtained from any maximal sequence of folding operations as in Proposition \ref{folding}.
\end{mydef}

\subsection{Rank-preserving and non-rank-preserving folding operations}

Let $G$ be an $\grafo$ and let $q:G\rar G'$ be a folding operation.

\begin{mydef}
A Stallings folding $q:G\rar G'$ is called \textbf{rank-preserving} if  it is an homotopy equivalence.
\end{mydef}

In that case, for every basepoint $*\in G$, the map $q:(G,*)\rar(G',q(*))$ is a pointed homotopy equivalence and $q_*:\pi_1(G,*)\rar\pi_1(G',q(*))$ is an isomorphism. Being rank-preserving is equivalent to the requirement that the endpoints that we are identifying are distinct (see also figure \ref{rankpreserving}).

\begin{figure}[h!]
\centering
\begin{tikzpicture}
\node (1) at (2,1) {.};
\node (2) at (1,3) {.};
\node (3) at (3,3) {.};
\draw[->] (1) to node[left]{$a$} (2);
\draw[->] (1) to node[right]{$a$} (3);

\node (4) at (5,1) {.};
\node (5) at (7,1) {.};
\draw[->] (4) to[out=115,in=65,looseness=50] node[below]{$a$} (4);
\draw[->] (4) to node[above]{$a$} (5);

\node (6) at (10,1) {.};
\node (7) at (10,3) {.};
\draw[->] (6) to[out=115,in=-115,looseness=1] node[left]{$a$} (7);
\draw[->] (6) to[out=65,in=-65,looseness=1] node[right]{$a$} (7);

\node (8) at (13,1) {.};
\draw[->] (8) to[out=140,in=110,looseness=50] node[left]{$a$} (8);
\draw[->] (8) to[out=70,in=40,looseness=50] node[right]{$a$} (8);
\end{tikzpicture}
\caption{Examples of configurations where a folding operation is possible. The two examples on the left produce rank-preserving folding operations; the two examples on the right produce non-rank-preserving folding operations.}
\label{rankpreserving}
\end{figure}
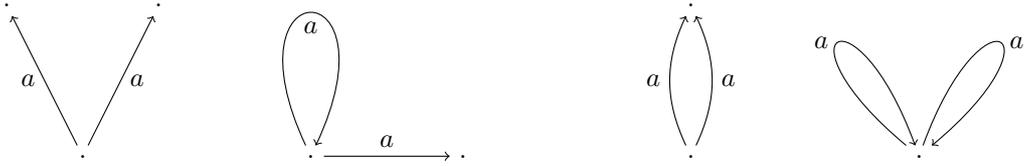

\begin{figure}[h!]
\centering
\begin{tikzpicture}
\node (1) at (0,0) {.};
\node (2) at (-1,2) {.};
\node (3) at (1,2) {.};
\node (4) at (-2,4) {.};
\node (5) at (0,4) {.};
\draw[->] (1) to node[left]{$a$} (2);
\draw[->] (1) to node[right]{$a$} (3);
\draw[->] (2) to node[left]{$b$} (4);
\draw[->] (2) to node[right]{$b$} (5);

\node (6) at (6,0) {.};
\node (7) at (5,2) {.};
\node (8) at (7,2) {.};
\node (9) at (4,4) {.};
\node (10) at (8,4) {.};
\draw[->] (6) to node[left]{$a$} (7);
\draw[->] (6) to node[right]{$a$} (8);
\draw[->] (7) to node[left]{$b$} (9);
\draw[->] (8) to node[right]{$b$} (10);
\end{tikzpicture}
\caption{Above we have two examples of graphs, and in each of them we want to perform two folding operations (one involving $a$-labeled edges, and the other involving $b$-labeled edges). In the graph on the left, we can perform the two operations in any order. In the graph on the right, we are forced to perform the operation on the $a$-labeled edges first.}
\label{commutefolding}
\end{figure}
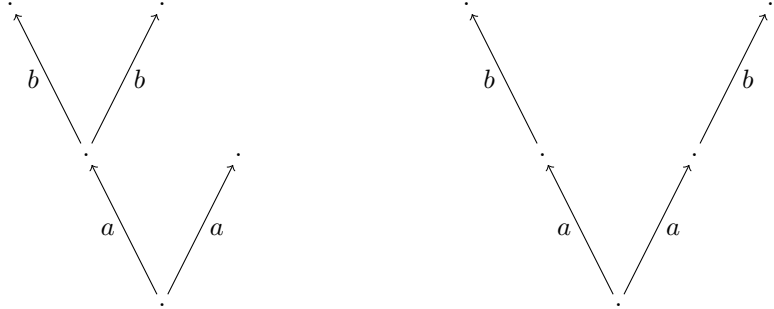

In a sequence of folding operations as in Proposition \ref{folding}, it is not always possible to change the order of the operations; see for example figure \ref{commutefolding}. Informally, we could say that certain folding operations are required before being able to perform other operations. The key observation is that the non-rank-preserving folding operations change the set of edges of $G$, but they do not change the set of vertices of $G$; as a consequence, they are not a requirement for any other operation. This can be made precise as follows.

\begin{myprop}\label{folding2}
Let $G$ be a finite connected $\grafo$. Let $G=G^0\rar G^1\rar...\rar G^k$ be a maximal sequence of rank-preserving folding operations. Let $G^k\rar G^{k+1}\rar...\rar G^m$ be a maximal sequence of folding operations for $G^k$. Also, fix a basepoint $*\in G$, inducing a basepoint $*\in G^i$ for every $i=1,...,m$. Then we have the following:

(i) Each map in the first sequence is a (pointed) homotopy equivalence; the map $G^0\rar G^k$ is an homotopy equivalence.

(ii) The second sequence only contains non-rank-preserving folding operations; the map $G^k\rar G^m$ is an isomorphism on the set of vertices.

(iii) The concatenation of the two sequences produces a folding sequence as in Proposition \ref{folding}. In particular $G^m=\fold{G}$.

(iv) The numbers $k,m$ do not depend on the chosen sequences.
\end{myprop}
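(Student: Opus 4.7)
The plan is to prove the four parts essentially in order, with part (ii) carrying the main content and the others following by bookkeeping.

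For part (i), each rank-preserving fold is a homotopy equivalence by definition, and a composition of homotopy equivalences is a homotopy equivalence, so $G^0\rar G^k$ is a (pointed) homotopy equivalence. This gives (i) immediately.

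The heart of the argument is part (ii). First I would reformulate the absence of a rank-preserving folding configuration as a local property at vertices: for a graph $G'$, no rank-preserving fold is available if and only if, at every vertex $v$ and for every label $a_i$ and every orientation, all edges at $v$ with that label/orientation share the opposite endpoint. Since $G^0\rar\cdots\rar G^k$ is a \emph{maximal} sequence of rank-preserving folds, the graph $G^k$ satisfies this local property. The key claim is then: a non-rank-preserving fold preserves this property. Indeed, such a fold merely identifies two parallel edges $e_1,e_2$ from $v$ to $w$ (same label, same orientation, $v_1=v_2=w$), so the set of vertices is unchanged and the effect on edges at any vertex $u$ is to either leave them untouched (if $u\notin\{v,w\}$) or to replace the pair $\{e_1,e_2\}$ by a single edge; in particular the collection of "other endpoints" reachable by a given label/orientation at $u$ is not enlarged. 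Hence the property "no rank-preserving fold available" propagates to every $G^i$ with $i\ge k$. This shows that the second sequence contains only non-rank-preserving folds, and since such folds act as the identity on vertex sets, the map $G^k\rar G^m$ is a bijection on vertices (and in fact a label-preserving isomorphism on the $0$-skeleton).

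Part (iii) is then a direct consequence: the concatenation $G^0\rar\cdots\rar G^m$ is a sequence of folds, and it is maximal because the second part was chosen maximal, so $G^m$ admits no further folding. Proposition \ref{folding}(i) identifies $G^m$ with $\fold{G}$.

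For part (iv), the number $m$ is already uniquely determined by $G$ through Proposition \ref{folding}(i). To pin down $k$, I would count vertices: a rank-preserving fold decreases the vertex count by exactly $1$, while a non-rank-preserving fold leaves it unchanged. Combined with (ii), this gives
\[
k=|V(G)|-|V(G^k)|=|V(G)|-|V(G^m)|=|V(G)|-|V(\fold{G})|,
\]
a quantity depending only on $G$.

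The only step requiring genuine work is the propagation claim in (ii); everything else is a matter of tracking invariants (homotopy type, vertex count, number of edges) under the two types of folds. The cleanest way to organise the propagation argument is the local characterisation of maximality stated above, which reduces the question to inspecting what a single non-rank-preserving fold does at the two vertices $v$ and $w$ it touches.
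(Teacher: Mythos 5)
Your proposal is correct and follows essentially the same route as the paper: the content is concentrated in (ii), and the key fact in both arguments is that a non-rank-preserving fold fixes the vertex set and cannot create a new rank-preserving configuration, with (i), (iii), (iv) handled by the same bookkeeping (homotopy equivalences compose; vertex count drops by exactly one per rank-preserving fold, giving $k=|V(G)|-|V(\fold{G})|$, and $m$ is fixed by Proposition \ref{folding}). The only difference is one of direction: the paper argues by contradiction, pulling a putative rank-preserving configuration in some $G^j$ back to $G^k$ through the vertex bijections, whereas you push the property ``no rank-preserving fold is available'' forward as an invariant via your local characterisation --- the contrapositive of the same observation.
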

\begin{myrmk}
This shows that the graph $G^k$ is essentially $\fold{G}$, but with some edge repeated two or more times (see figure \ref{gmgl}). The repeated edges (and their multiplicity) can depend on the chosen sequence of folding operations; the graph $G^k$ is not uniquely determined by $G$.
\end{myrmk}

\begin{proof}
Part (i) is trivial.

For (ii), suppose the sequence $G^k\rar...\rar G^m$ contains a rank-preserving folding operation, and let $j\ge k$ be the smallest integer such that $G^j\rar G^{j+1}$ is rank-preserving; this means that there are two edges $e_1,e_2$ in $G^j$ with an endpoint $v$ in common, the other endpoints $v_1\not=v_2$ distinct, and the same label and orientation. Let $p:G^k\rar G^j$ be the composition of the sequence of folding operations $G^k\rar...\rar G^j$: each of those operations is non-rank-preserving, and in particular it induces an isomorphism on the set of vertices. Thus we can take the vertices $p^{-1}(v)$ and $p^{-1}(v_1)\not=p^{-1}(v_2)$. Take any edge $e_3\in p^{-1}(e_1)$ and $e_4\in p^{-1}(e_2)$ and we have that in $G^k$ is it possible to fold $e_3$ and $e_4$, performing a rank-preserving folding operation. This gives a contradiction because the sequence of rank-preserving folding operations $G^0\rar...\rar G^k$ was maximal.

Part (iii) is trivial.

For part (iv), we observe the following: along the sequence $G^0\rar...\rar G^k$, at each step the number of vertices decreases by one, while along the sequence $G^k\rar...\rar G^m$ the number of vertices is preserved. Thus $k$ is equal to the number of vertices of $G$ minus the number of vertices of $\fold{G}$, regardless of the chosen sequence. By Proposition \ref{folding} the sum $m+k$ doesn't depend on the chosen sequence, and thus neither does $m$.
\end{proof}

\begin{figure}
\centering
\begin{tikzpicture}
\node (a1) at (0,0) {*};
\node (a2) at (-2,1) {.};
\node (a3) at (-2,3) {.};
\node (a4) at (-1,4) {.};
\node (a5) at (1,4) {.};
\node (a6) at (2,3) {.};
\node (a7) at (2,1) {.};
\draw (a1) to (a2);
\draw[out=90,in=270,looseness=1] (a2) to (a3);
\draw[out=110,in=250,looseness=1] (a2) to (a3);
\draw (a3) to (a4);
\draw (a4) to (a5);
\draw[out=120,in=60,looseness=25] (a5) to (a5);
\draw[out=125,in=55,looseness=25] (a5) to (a5);
\draw (a5) to (a6);
\draw[out=-115,in=115,looseness=1] (a6) to (a7);
\draw[out=-125,in=125,looseness=1] (a6) to (a7);
\draw[out=-135,in=135,looseness=1] (a6) to (a7);
\draw[out=-60,in=60,looseness=1] (a6) to (a7);
\draw[out=-50,in=50,looseness=1] (a6) to (a7);
\draw (a7) to (a1);
\draw (a4) to (a1);
\node (a15) at (-2,5) {$G^k$};


\node (b1) at (9,0) {*};
\node (b2) at (7,1) {.};
\node (b3) at (7,3) {.};
\node (b4) at (8,4) {.};
\node (b5) at (10,4) {.};
\node (b6) at (11,3) {.};
\node (b7) at (11,1) {.};
\draw (b1) to (b2);
\draw (b2) to (b3);
\draw (b3) to (b4);
\draw (b4) to (b5);
\draw[out=120,in=60,looseness=25] (b5) to (b5);
\draw (b5) to (b6);
\draw[out=-120,in=120,looseness=1] (b6) to (b7);
\draw[out=-60,in=60,looseness=1] (b6) to (b7);
\draw (b7) to (b1);
\draw (b4) to (b1);
\node (b15) at (7,5) {$G^m$};
\end{tikzpicture}
\caption{An example of the result of the folding procedure described in Proposition \ref{folding2} (the labeling has been omitted). On the left the graph $G^k$, the result of the sequence of only rank-preserving folding operations. On the right the graph $G^m$ obtained from a sequence of both rank-preserving and non-rank-preserving operations.}\label{gmgl}
\end{figure}
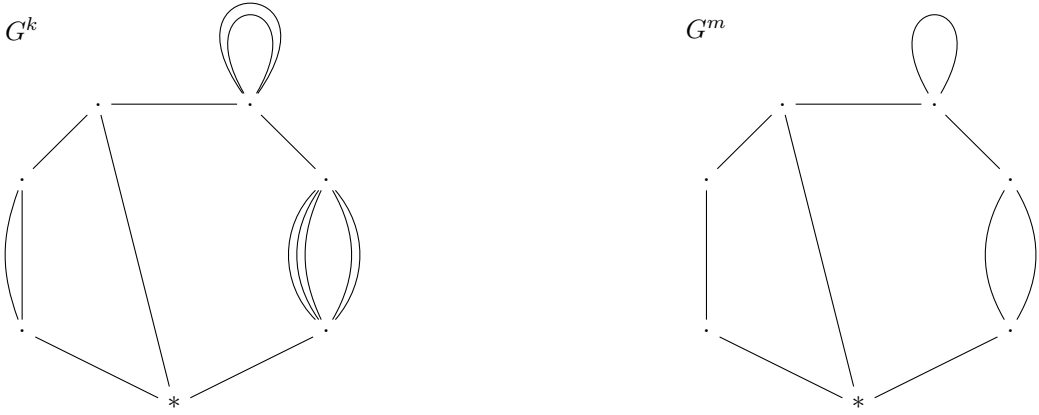

\subsection{A set of normal generators for the set of equations}\label{SubsectionG}

\begin{mythm}\label{idealfingen}
Let $H\sgr F_n$ be a finitely generated subgroup and $g\in F_n$ be an element. Then we have the following:

(i) The ideal $\fI_g\nor H*\gen{x}$ is finitely generated as a normal subgroup.

(ii) The set of generators for $\fI_g$ can be taken to be a subset of a basis for $H*\gen{x}$.

(iii) There is an algorithm that, given $H$ and $g$, computes a finite set of normal generators for $\fI_g$ which is also a subset of a basis for $H*\gen{x}$.
\end{mythm}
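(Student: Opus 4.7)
The plan is to realize $\varphi_g$ geometrically and to read $\fI_g$ off the non-rank-preserving folds. Writing $g$ as a word of length $l$ in the basis $a_1, \dots, a_n$ of $F_n$, I would build the $\grafo$ $G = \bcore{H} \vee C_l$ (wedged at basepoints), with $C_l$ labeled so that the standard loop around it spells $g$. Under the natural identification $\pi_1(G, *) \cong \pi_1(\bcore{H}, *) * \pi_1(C_l, *) \cong H * \gen{x}$, the map induced on fundamental groups by the labeling map $f : G \to R_n$ is exactly $\varphi_g$; hence $\ker f_* = \fI_g$.

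I would then apply Proposition \ref{folding2} to $G$: perform all rank-preserving folds first, obtaining $G = G^0 \to G^k$, then the remaining non-rank-preserving folds $G^k \to G^m = \fold{G}$. The first sequence is a pointed homotopy equivalence, so $\pi_1(G^k, *) \cong H * \gen{x}$ and its labeling map still induces $\varphi_g$. By Proposition \ref{folding}(iv), the labeling of $G^m$ induces an injective map $\pi_1(G^m, *) \hookrightarrow F_n$. Letting $q : G^k \to G^m$ be the composition of the non-rank-preserving folds, this means that $\fI_g$ equals the kernel of the surjection $q_* : \pi_1(G^k, *) \twoheadrightarrow \pi_1(G^m, *)$, and it remains to read this kernel off the combinatorics of $q$.

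The key point is that $q$ is a bijection on vertices and merely identifies edges: each edge $e$ of $G^m$ has a fibre $q^{-1}(e)$ of $\mu(e) \geq 1$ edges in $G^k$, with $\sum_e (\mu(e) - 1) = m - k$. I would choose a maximal tree $T^m \subseteq G^m$ and, for each edge $e$ of $G^m$, a distinguished preimage $e^* \in q^{-1}(e)$. The set $T^k := \{e^* : e \in T^m\}$ is then a maximal tree of $G^k$: it spans because $q$ is a vertex-bijection, and it is acyclic because any cycle would project to a cycle in $T^m$. By Proposition \ref{fundamentalgroupgraph}, $T^k$ determines a basis $\{\sigma_{\tilde e} : \tilde e \notin T^k\}$ of $\pi_1(G^k, *)$. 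On this basis I would perform the following Nielsen transformation: for each \emph{extra} preimage $\tilde e \in q^{-1}(e) \setminus \{e^*\}$, replace $\sigma_{\tilde e}$ by $\sigma_{\tilde e} \sigma_{e^*}^{-1}$ when $e \notin T^m$, and leave $\sigma_{\tilde e}$ unchanged when $e \in T^m$ (in which case $\sigma_{\tilde e}$ already lies in $\ker q_*$, since $e^* \in T^k$). The output is a new basis $B = B_1 \sqcup B_2$ of $H * \gen{x}$: the set $B_2$ has exactly $m - k$ elements, all in $\ker q_*$, while $B_1$ maps bijectively to the standard basis $\{\sigma_e : e \notin T^m\}$ of $\pi_1(G^m, *)$. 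Therefore $\pi_1(G^k, *) / \ggen{B_2}$ is free on the image of $B_1$ and maps isomorphically to $\pi_1(G^m, *)$, forcing $\ggen{B_2} = \fI_g$ and establishing (i) and (ii).

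Part (iii) then follows because every ingredient above is effective: $\bcore{H}$ is built from the generators of $H$ by the standard Stallings algorithm; $C_l$ is built directly from the spelling of $g$; the folding procedure is algorithmic, as is its split into rank-preserving and non-rank-preserving parts; and maximal trees, distinguished preimages, and the Nielsen transformation are purely combinatorial operations on $G^k$ and $G^m$. Each element of $B_2$ is then expressed as a word in a fixed basis of $H * \gen{x}$ by tracing the corresponding loop in $G^k$ back through the rank-preserving folds to $G$. The main obstacle I anticipate is keeping the bookkeeping of the third step clean so that $B_2$ is simultaneously a subset of a basis of $H * \gen{x}$ and a normal generating set for $\fI_g$; once this is in place, (iii) follows mechanically.
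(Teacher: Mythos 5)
Your proposal is correct and follows essentially the same route as the paper: realize $\varphi_g$ as the labeling map on a wedge graph, split the folding sequence into rank-preserving folds (a homotopy equivalence) followed by non-rank-preserving folds $q$, and read off normal generators of $\ker q_*$ from a maximal-tree basis, obtaining a subset of a basis of $H*\gen{x}$. The only differences are cosmetic: you wedge on a labeled cycle $C_l$ rather than $\bcore{\gen{g}}$, and you lift a maximal tree of $G^m$ via distinguished preimages instead of pushing forward a maximal tree of $G^k$, arriving at the same generating set $\{\sigma_{\tilde e}\}\cup\{\sigma_{\tilde e}\sigma_{e^*}^{-1}\}$ as the paper's set $N$.
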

\begin{proof}
Let $\varphi_g:H*\gen{x}\rar F_n$ be the corresponding evaluation map, so that $\fI_g=\ker{\varphi_g}$. Let $G=\bcore{H}\vee\bcore{\gen{g}}$ be the pointed $\grafo$ obtained by identifying the basepoints of $\bcore{H}$ and $\bcore{\gen{g}}$; see figure \ref{graphG}. Let $f:G\rar R_n$ be the labeling map, inducing a map $f_*:\pi_1(G,*)\rar\pi_1(R_n,*)$ between the fundamental groups.

Let $\theta:H*\gen{x}\rar\pi_1(G,*)$ be the isomorphism sending each element of $H$ to the corresponding path in $\bcore{H}$, and the element $x$ to the path in $\bcore{\gen{g}}$ corresponding to the element $g$. It is immediate to see that $f_*\circ\theta=\varphi_g$ as maps from $H*\gen{x}$ to $\pi_1(R_n,*)=F_n$: in particular we have $\fI_g=\ker{\varphi_g}=\ker(f_*\circ\theta)=\theta^{-1}(\ker{f_*})$.

\begin{figure}[h!]
\centering
\begin{tikzpicture}[scale=1]
\node (1) at (0,0) {*};
\node (2) at (-2,-1.2) {.};
\node (3) at (-4,-1.2) {.};
\node (4) at (-6,0) {.};
\node (5) at (-2,1.2) {.};
\node (6) at (-4,1.2) {.};

\node (8) at (2,1) {.};
\node (9) at (2,-1) {.};

\draw[->] (1) to node[below]{$a$} (2);
\draw[->] (2) to node[below]{$b$} (3);
\draw[->] (3) to node[below]{$a$} (4);
\draw[->] (1) to node[above]{$b$} (5);
\draw[->] (5) to node[above]{$b$} (6);
\draw[->] (4) to node[above]{$a$} (6);
\draw[->] (4) to node[above]{$b$} (1);

\draw[->] (1) to node[above]{$b$} (8);
\draw[->] (8) to node[right]{$b$} (9);
\draw[->] (1) to node[below]{$a$} (9);

\node (a) at (0,-1.6) {$G$};

\end{tikzpicture}
\caption{In the picture we can see the graph $G$. Here $F_2=\gen{a,b}$ and $H=\gen{h_1,h_2}$ with $h_1=b^2\ol{a}b$ and $h_2=abab$. On the left of the basepoint we have the graph $\bcore{H}$. On the right of the basepoint we have the graph $\bcore{\gen{g}}$ where $g=b^2\ol{a}$.}
\label{graphG}
\end{figure}

Let $G=G^0\rar...\rar G^k$ be a maximal sequence of rank-preserving folding operations and let $G^k\rar...\rar G^m$ be a maximal sequence of folding operations for $G^k$, as in Proposition \ref{folding2}. The basepoint $*\in G$ induces a basepoint $*\in G^i$ for every $i=0,...,m$. Let $p:G\rar G^k$ and $q:G^k\rar G^m$ be the quotient maps given by the sequences of foldings. Let $R_n$ be the $n$-rose with edges labeled $a_1,...,a_n$ and let $f:G\rar R_n$ and $f^k:G^k\rar R_n$ and $f^m:G^m\rar R_n$ be the labeling maps.

By Proposition \ref{folding2} we have that $p_*:\pi_1(G,*)\rar\pi_1(G^k,*)$ is an homotopy equivalence; by Proposition \ref{folding} we have that $f^m_*:\pi_1(G^m,*)\rar\pi_1(R_n,*)=F_n$ is injective. Since diagram \ref{maps} commutes, we have $\ker{\varphi_g}=\ker(f_*\circ\theta)=\ker(f^m_*\circ q_*\circ p_*\circ\theta)=\theta^{-1}(p_*^{-1}(\ker{q_*}))$. We now show that $\ker{q_*}$ is quite easy to compute, and the maps $p_*^{-1}$ and $\theta^{-1}$ can be made explicit too.

\begin{figure}[h!]
\centering
\begin{tikzpicture}
\node (0) at (0,2) {$H*\gen{x}$};
\node (1) at (3,2) {$\pi_1(G,*)$};
\node (2) at (6,2) {$\pi_1(G^k,*)$};
\node (3) at (9,2) {$\pi_1(G^m,*)$};
\node (4) at (3,0) {$F_n=\pi_1(R_n,*)$};

\draw[->] (0) to node[above]{$\theta$} (1);
\draw[->] (1) to node[above]{$p_*$} (2);
\draw[->] (2) to node[above]{$q_*$} (3);
\draw[->] (0) to node[left]{$\varphi_g$} (1.8,0.3);
\draw[->] (1) to node[left]{$f_*$} (3.4,0.3);
\draw[->] (2) to node[left]{$f^k_*$} (3.85,0.3);
\draw[->] (3) to (4.3,0.3);
\node (9) at (5.6,1) {$f^m_*$};
\end{tikzpicture}
\caption{The diagram commutes.}\label{maps}
\end{figure}
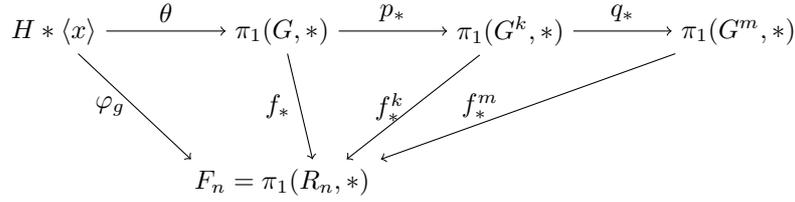

Let $T$ be a maximal tree for $G^k$. Let $e_1,...,e_{r}$ be the list of edges in $G^k\setminus T$ (each with its orientation coming from the labeling): these give a basis $\sigma_1,...,\sigma_{r}$ for the fundamental group $\pi_1(G^k,*)$, as defined in Proposition \ref{fundamentalgroupgraph}.
By Proposition \ref{folding2}, the map $q:G^k\rar G^m$ is an isomorphism on the set of vertices (see figure \ref{gmgl}), and thus $\restr{q}{T}$ is an homeomorphism and $q(T)$ is a maximal tree for $G^m$. Let $d_1,...,d_s$ be the list of edges in $G^m\setminus q(T)$ (each with its orientation coming from the labeling map): these give a basis $\tau_1,...,\tau_s$ for the fundamental group $\pi_1(G^m,*)$, according to Proposition \ref{fundamentalgroupgraph}.

The map $q_*:\pi_1(G^k,*)\rar\pi_1(G^m,*)$ is now very easy to describe: we have

$$q_*(\sigma_i)=\begin{cases}
1&\text{if }q(e_i)\in q(T)\\
\tau_j&\text{if }q(e_i)=d_j\in G^m\setminus q(T)
\end{cases}$$

For each $d_j\in G^m\setminus q(T)$ fix an index $i(j)$ such that $q(e_{i(j)})=d_j$. Then we can define the set
$$N=\{\sigma_i : q_*(\sigma_i)=1\}\cup\{\sigma_{i'}\sigma_{i(j)}^{-1} : q_*(\sigma_{i'})=d_j\text{ and }i'\not=i(j)\}$$
and we observe that $N$ is a set of normal generators for $\ker{q_*}$, and it also has the additional property of being a subset of a basis for $\pi_1(G^k,*)$.

We observe that the inverse $p_*^{-1}$ can be made explicit as follows. Each folding operation in the chain $G=G^0\rar...\rar G^k$ is a pointed homotopy equivalence, and it is easy to produce homotopy inverses $\alpha^i:G^i\rar G^{i-1}$ for $i=1,...,k$. We now take the composition $\alpha=\alpha^1\circ...\circ\alpha^k:G^k\rar G$ and we observe that the map $\alpha_*:\pi_1(G^k,*)\rar\pi_1(G,*)$ is exactly the desired inverse $\alpha_*=p_*^{-1}$.

Finally, the map $\theta^{-1}$ works as follows: we take a path $\gamma:I_l\rar G$ with $\gamma(0)=\gamma(1)=*$, we take its reduction $\ol\gamma$, and we write down the word that we read while going along $\ol\gamma$; whenever we cross $\bcore{x}$ we write $x$ or $\ol{x}$ instead of the labels of the edges of $\bcore{x}$. The result of this process is exactly the element $\theta^{-1}([\gamma])\in H*\gen{x}$.
\end{proof}

\begin{myrmk}
Notice that with the above argument, we are able to produce a basis $c_1,...,c_{r+1}$ for $H*\gen{x}$, where $c_i=\theta^{-1}(p_*^{-1}(\sigma_i))$, such that each of $h_1,...,h_r,x$ written as a reduced word in $c_1,...,c_{r+1}$ has at most the same length as $h_1,...,h_r,g$ written as a reduced word in $a_1,...,a_n$, respectively; moreover, the ideal $\fI_g$ is generated (as normal subgroup) by words in $c_1,...,c_{r+1}$ of length at most $2$.
\end{myrmk}

\begin{myrmk}
Consider the subgroup $\gen{H,g}\sgr F_n$. We can compute a basis for $\gen{H,g}$, and we can then use $\gen{H,g}$ as ambient group instead of $F_n$ itself; this doesn't change the kernel $\ker{\varphi_g}$. In other words, we can assume that the graph $G^m$ that we obtain at the end of the folding process is exactly the rose $R_n$, and that the graph $G^k$ is a rose too, but with some label repeated more than once on the petals. This assumption makes the computations easier.
\end{myrmk}

\section{The minimum degree of an equation}\label{SectionMain}

In this section, we work with a fixed finitely generated subgroup $H\sgr F_n$ and with a fixed element $g\in F_n$ such that $g$ depends on $H$. With the same notation as in the proof of Theorem \ref{idealfingen}, we consider the $\grafo$ $G=\bcore{H}\vee\bcore{\gen{g}}$ with labeling map $f:G\rar R_n$, inducing a map $f_*:\pi_1(G,*)\rar F_n$ of fundamental groups.  We consider the isomorphism $\theta:H*\gen{x}\rar\pi_1(G,*)$ as defined in the proof of Theorem \ref{idealfingen}.

\begin{mydef}\label{defcorrpath}
Let $w\in H*\gen{x}$ be a non-trivial equation. Define the \textbf{corresponding path} $\sigma:I_l\rar G$ as the unique reduced path in the homotopy class $\theta(w)$ (see Proposition \ref{reducedpath}).
\end{mydef}

\begin{mydef}\label{defcorrequation}
Let $\sigma:I_l\rar G$ be a reduced path with $\sigma(0)=\sigma(1)=*$. Define the \textbf{corresponding equation} $w\in H*\gen{x}$ as $w=\theta^{-1}([\sigma])$.
\end{mydef}

The two above definitions give a bijection between non-trivial equations $w\in H*\gen{x}$ and reduced paths $\sigma:I_l\rar G$. Cyclically reduced paths correspond to \textit{cyclically reduced equations}, i.e. equations $w\in H*\gen{x}$ such that, when we write $w$ as a reduced word in the letters $a_1,...,a_n,x$, the word is also cyclically reduced.

The aim of this section is to prove the following theorem:

\begin{mythm}\label{main}
Let $L$ be the number of edges of the graph $G$ and let $d_{min}$ be the minimum possible degree for a non-trivial equation in $\fI_g$. Then there is a non-trivial equation $w\in\fI_g$ of degree $d_{min}$ and such that the corresponding path $\sigma:I_l\rar G$ has length $l\le16L^2d_{min}$.
\end{mythm}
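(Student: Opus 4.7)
The plan is to take a cyclically reduced equation $w\in\fI_g$ that realizes the minimum degree $d_{min}$ and, among all such equations, has corresponding path of minimum length $l$. I then aim to show $l\le 16L^2d_{min}$ by contradiction: if $l$ is too large, I would apply the parallel cancellation move of Lemma \ref{parallelcancellation} to produce a strictly shorter equation of the same degree $d_{min}$, contradicting minimality. A tacit preliminary step is to replace $w$ by a cyclic conjugate so that the corresponding path $\sigma:I_l\rar G$ is cyclically reduced, which does not alter membership in $\fI_g$ nor the degree.

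First I would analyze the structure of $\sigma$. Since $w$ is cyclically reduced of degree exactly $d_{min}$, the $d_{min}$ occurrences of $x^{\pm1}$ in $w$ correspond to $d_{min}$ maximal sub-arcs of $\sigma$ traversing the cycle of $\bcore{\gen{g}}$. Removing these sub-arcs partitions $\sigma$ (viewed cyclically) into $d_{min}$ maximal subpaths, each contained in $\bcore{H}$. Under the assumption $l>16L^2d_{min}$, an averaging argument locates one such subpath $\alpha\subseteq\bcore H$ of length greater than $16L^2$.

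Second, I would extract from $\alpha$ the parallel structure required as input to Lemma \ref{parallelcancellation}. The factor $L^2$ reflects pigeonholing on the finite set of local configurations of two consecutive oriented edges of $G$ (at most $L$ edges, each with two orientations, yielding on the order of $L^2$ possible such states). Among the $>16L^2$ consecutive edges of $\alpha$, this guarantees two long, disjoint subsegments of $\sigma$ traversing the same labeled path in $G$ with the same orientation; these are exactly the parallel arcs that Lemma \ref{parallelcancellation} can delete, producing a reduced path $\sigma'$ whose corresponding equation $w'$ still lies in $\fI_g$. Because both parallel subsegments sit inside $\bcore H$, the deletion destroys no $x^{\pm1}$-occurrences, so the degree of $w'$ remains $d_{min}$; but $\sigma'$ is strictly shorter than $\sigma$, contradicting the minimality of $l$.

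The main obstacle will be matching the combinatorial content of the pigeonhole to the precise hypotheses of Lemma \ref{parallelcancellation}. The lemma presumably demands the parallel arcs to be disjoint, sufficiently long, and compatible with a reduction process of $f\circ\sigma$ in $R_n$ in a way that guarantees the resulting $\sigma'$ is again reduced and still represents a nullhomotopic path in $R_n$. The constant $16=2\cdot 8$ (or similar) arises as the slack needed to simultaneously find the parallel subsegments, place them safely away from the $d_{min}$ excised $x$-arcs, and satisfy the technical separation conditions of the lemma. Once this matching is justified, the rest is a clean length-minimization contradiction; equivalently, one could present the proof as a termination argument that iterates parallel cancellations on an arbitrary minimum-degree equation until its length descends below $16L^2d_{min}$.
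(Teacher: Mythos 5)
Your overall frame (choose a minimum-degree equation whose corresponding path has minimum length, then derive a contradiction from a parallel cancellation move via Lemma \ref{parallelcancellation} and Lemma \ref{degreedecreases}) is exactly the paper's. The gap is in the middle step, where you produce the input to Lemma \ref{parallelcancellation}. In the paper, ``parallel'' (Definition \ref{parallel}) refers to two \emph{couples of a maximal reduction process for $f\circ\sigma$}, i.e.\ pairs of edges of $I_l$ that cancel against each other when the image path in $R_n$ is freely reduced, and which moreover are nested in the order $s_\beta,s_\alpha,t_\alpha,t_\beta$ and have matching $\sigma$-images. Your pigeonhole instead finds two repeated oriented edge-configurations of $\sigma$ inside a single long subpath $\alpha\subseteq\bcore{H}$ (one $H$-coefficient block of $w$). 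Such a repetition is not a pair of parallel couples: it carries no information about the cancellation structure of $f\circ\sigma$, and cutting out the subloop of $\sigma$ between the two repeated configurations merely replaces the coefficient $c_i\in H$ by a shorter element of $H$. There is no reason the resulting word $w'$ still satisfies $\varphi_g(w')=1$; membership in $\fI_g$ is precisely what the paper's move preserves, because it deletes \emph{two} intervals (one between $\omeno{s_\beta}$ and $\omeno{s_\alpha}$, one between $\opiu{t_\alpha}$ and $\opiu{t_\beta}$) whose removal is compatible with the reduction process, so that the surviving couples still witness that $f\circ\sigma'$ is nullhomotopic.

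Concretely, what is missing is the content of Proposition \ref{findingparallels}: one pigeonholes the $l/2$ cancellation couples of $f\circ\sigma$ (not positions along $\sigma$) according to the quadruple $(\sigma(s_i),\epsilon,\sigma(t_i),\delta)$, of which there are at most $4L^2$; since $l/2>4L^2\cdot 2d_{min}$ this yields $2d_{min}+1$ couples with the same quadruple, and then the bound of at most $2d_{min}$ innermost cancellations (Lemmas \ref{innermostcouple} and \ref{innermostbounded}) forces two of them to contain the same innermost cancellation, which is what gives the nesting $s_\beta,s_\alpha,t_\alpha,t_\beta$ required by Definition \ref{parallel}. Your argument never invokes the reduction process or the innermost-cancellation count, so it cannot produce the nested parallel couples the cancellation move needs; also note that in the paper one does not need the move to be degree-preserving here, since Lemma \ref{degreedecreases} together with minimality of $d_{min}$ already forces $d'=d_{min}$ (degree-preservation only becomes essential for the fixed-degree results via Proposition \ref{findingparallels2}).
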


\subsection{Innermost cancellations}

The degree of a cyclically reduced equation can be computed by looking at how many times we cross the edges of $\core{\gen{g}}$.

\begin{mylemma}\label{degree}
Let $\sigma:I_l\rar G$ be a cyclically reduced path. Let $e$ be any edge of $G$ that belongs to the subgraph $\core{\gen{g}}$. Then the degree of the equation $w$ corresponding to $\sigma$ coincides with the number of times $\sigma$ crosses the edge $e$ (in either direction).
\end{mylemma}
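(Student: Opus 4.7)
The plan is to decompose $\sigma$ according to which side of the wedge $G = \bcore{H} \vee \bcore{\gen{g}}$ each edge lies on. Since $\bcore{H} \cap \bcore{\gen{g}} = \{*\}$, any transition between the two subgraphs forces $\sigma$ to pass through the basepoint. Grouping consecutive edges of the same type into maximal runs thus yields a decomposition $\sigma = \gamma_1 \gamma_2 \cdots \gamma_r$ in which each $\gamma_j$ is a loop based at $*$ lying entirely in one subgraph, and in which the types alternate by maximality.

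Next I would read off the corresponding equation. Each $H$-segment represents a non-trivial element $h_j \in H \cong \pi_1(\bcore{H}, *)$, and each $g$-segment represents a non-trivial power $x^{k_j}$ in $\gen{x} \cong \pi_1(\bcore{\gen{g}}, *)$. Applying $\theta^{-1}$ to the concatenation gives $w = h_0 x^{k_1} h_1 \cdots x^{k_s} h_s$ as a reduced word in $H * \gen{x}$ (with $h_0$ or $h_s$ possibly absent). Because $\sigma$ is cyclically reduced, $w$ is cyclically reduced as a word in $H * \gen{x}$ as well, and therefore the degree of $w$ equals $\sum_j |k_j|$.

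To finish, I count crossings of $e$. Since $e$ lies in the cycle $\core{\gen{g}}$, which sits inside $\bcore{\gen{g}}$ as a lollipop with a (possibly trivial) stem attached at $*$, only $g$-segments can cross it. By Proposition \ref{reducedpath} each $\gamma_j$ is the unique reduced loop representing $x^{k_j}$ in $\bcore{\gen{g}}$: it traverses the stem, winds $|k_j|$ times around $\core{\gen{g}}$ in the direction determined by the sign of $k_j$, and returns along the stem. Each full winding crosses $e$ exactly once, so $\gamma_j$ contributes $|k_j|$ crossings, and summing over the $g$-segments gives $\sum_j |k_j|$, which matches the degree of $w$.

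The main subtle point I expect is checking the compatibility between reduction in $G$ and reduction in the free product: one has to show that reducedness (respectively cyclic reducedness) of $\sigma$ forces $w$ to be reduced (respectively cyclically reduced) in $H * \gen{x}$, by arguing that any cancellation internal to the free product would lift to a cancellation in $\sigma$ between adjacent edges at the basepoint, contradicting the maximality of the segments or the reducedness of $\sigma$. Once this bookkeeping is pinned down, the counting argument above goes through directly.
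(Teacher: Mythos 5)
Your proposal is correct and is essentially the paper's own proof run in the opposite direction: the paper starts from the cyclically reduced free-product normal form $c_1x^{\alpha_1}\cdots c_rx^{\alpha_r}c_{r+1}$ and realizes $\sigma$ as the concatenation of the corresponding subpaths, while you decompose $\sigma$ at the wedge point into maximal runs and read off the same normal form. The key counting step is identical in both (the $H$-segments stay in $\bcore{H}$ and never cross $e$, and each $x^{k}$-segment, being the unique reduced lollipop path, crosses each edge of $\core{\gen{g}}$ exactly $\abs{k}$ times), as is the compatibility between cyclic reducedness of $\sigma$ and of $w$ that you flag.
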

\begin{proof}
Write the equation $w$ as a cyclically reduced word $c_1x^{\alpha_1}c_2x^{\alpha_2}...c_rx^{\alpha_r}c_{r+1}$ with $\alpha_1,...,\alpha_r\in\bZ\setminus\{0\}$ and $c_1,...,c_{r+1}\in H$. Then in the graph $G$ we have that $\theta(w)=\theta(c_1)\cdot\theta(x^{\alpha_1})\cdot...\cdot\theta(x^{\alpha_r})\cdot\theta(c_{r+1})$, where the $\cdot$ symbol denotes the concatenation of paths (without any homotopy). It is immediate to see that $\theta(x^{\alpha_i})$ crosses each edge of $\core{\gen{g}}$ exactly $\abs{\alpha_i}$ times, for $i=1,...,r$, and that $\theta(c_i)$ is contained in $\bcore{H}$ and thus it doesn't cross any edge of $\core{\gen{g}}$. The conclusion follows.
\end{proof}

%
%
%
%

Non-trivial equations $w\in\fI_g$ with $g$ as a solution correspond to reduced paths $\sigma:I_l\rar G$ such that $f\circ\sigma$ is homotopycally trivial (relative to the endpoints). In this case we can take a maximal reduction process $(s_1,t_1),...,(s_{l/2},t_{l/2})$ for $f\circ\sigma$. The following two lemmas, which will be of fundamental importance in what follows, tell us that the degree of an equation is closely related to the number of innermost cancellations.

\begin{mydef}
Let $\sigma:I_l\rar G$ be a reduced path with $\sigma(0)=\sigma(1)=*$ and such that $f\circ\sigma$ is homotopycally trivial (relative to the endpoints); let $(s_1,t_1),...,(s_{l/2},t_{l/2})$ be a maximal reduction process for $f\circ\sigma$. A couple $(s_i,t_i)$ is called \textbf{innermost cancellation} if $s_i$ and $t_i$ are adjacent on the interval $I_l$.
\end{mydef}

\begin{mylemma}\label{innermostcouple}
Let $\sigma:I_l\rar G$ be a reduced path with $\sigma(0)=\sigma(1)=*$ and such that $f\circ\sigma$ is homotopically trivial (relative to its endpoints); let $(s_1,t_1),...,(s_{l/2},t_{l/2})$ be a maximal reduction process for $f\circ\sigma$. Let $(s_i,t_i)$ be an innermost cancellation. Then, among $\sigma(s_i)$ and $\sigma(t_i)$, one is an edge of $\bcore{H}$ and the other is an edge of $\bcore{\gen{g}}$ (and the vertex between them is the basepoint).
\end{mylemma}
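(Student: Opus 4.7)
The plan is to exploit two facts: first, $\sigma$ is a reduced combinatorial map; second, each of $\bcore{H}$ and $\bcore{\gen{g}}$ is a subgraph of a covering of $R_n$, hence Stallings-folded. Combined with the wedge structure $G=\bcore{H}\vee\bcore{\gen{g}}$, these will force exactly the claimed configuration.

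Since $(s_i,t_i)$ is innermost, $s_i$ and $t_i$ are adjacent in $I_l$ and share the vertex $v:=\opiu{s_i}=\omeno{t_i}$; in particular $\sigma(s_i)$ and $\sigma(t_i)$ are edges of $G$ sharing the vertex $u:=\sigma(v)$. By property (iv) of Definition \ref{reductionprocess} applied to $f\circ\sigma$, the edges $f(\sigma(s_i))$ and $f(\sigma(t_i))$ coincide as the same edge of $R_n$ traversed with opposite orientations; so $\sigma(s_i)$ and $\sigma(t_i)$ carry the same label, say $a$, and are traversed by $\sigma$ in opposite orientations relative to the labeling.

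The first key step is to rule out $\sigma(s_i)=\sigma(t_i)$ as edges of $G$. If they coincided, then at the shared vertex $u$ the path $\sigma$ would traverse the same edge forward and then immediately backward, contradicting that $\sigma$ is reduced. Hence $\sigma(s_i)$ and $\sigma(t_i)$ are two \emph{distinct} edges of $G$, both labeled $a$, meeting at $u$, and with opposite orientations in the sense of the labeling. Equivalently, $u$ is either the initial vertex of both or the terminal vertex of both (with respect to the $a$-orientation).

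Now I use the folded structure. Suppose, for contradiction, that $u\neq*$. Then $u$ lies in the interior of one of the two wedge factors, say $\bcore{H}$, and both incident edges $\sigma(s_i)$ and $\sigma(t_i)$ belong to $\bcore{H}$. But $\bcore{H}\subseteq\cov{H}$ is a subgraph of a covering of $R_n$, so at every vertex of $\bcore{H}$ there is at most one edge labeled $a$ with any prescribed orientation at that vertex; this contradicts the existence of two distinct such edges at $u$. Hence $u=*$. The same folding argument applied at $*$ inside $\bcore{H}$ (resp.\ inside $\bcore{\gen{g}}$) shows that $\sigma(s_i)$ and $\sigma(t_i)$ cannot both lie in $\bcore{H}$, nor both in $\bcore{\gen{g}}$. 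Therefore one of them is an edge of $\bcore{H}$ and the other an edge of $\bcore{\gen{g}}$, with the basepoint $*$ sitting between them, as claimed.

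I expect no serious obstacle: the only thing to be careful about is distinguishing the labeling-orientation of an edge of $G$ from the direction in which $\sigma$ happens to traverse it, so that the phrase ``opposite orientations'' from the reduction process is correctly transferred from $R_n$ to $G$. Once that bookkeeping is in place, the foldedness of $\bcore{H}$ and $\bcore{\gen{g}}$ together with the wedge structure does all the work.
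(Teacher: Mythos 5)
Your proof is correct and follows essentially the same route as the paper: the reduction-process condition gives that $\sigma(s_i)$ and $\sigma(t_i)$ have the same label and opposite orientations, reducedness of $\sigma$ makes them distinct, and foldedness of $\bcore{H}$ and $\bcore{\gen{g}}$ rules out both lying in the same wedge factor. Your extra step showing the shared vertex must be the basepoint (since a non-basepoint vertex would put both edges in one folded factor) is just a slightly more explicit bookkeeping of what the paper leaves implicit.
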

\begin{proof}
Since $(s_i,t_i)$ is a couple of a reduction process for $f\circ\sigma$, we have that $(f\circ\sigma)(s_i)$ and $(f\circ\sigma)(t_i)$ are the same edge of $\fold{G}$ but with opposite orientations. In particular $(f\circ\sigma)(s_i)$ and $(f\circ\sigma)(t_i)$ have the same label and opposite orientations, and, since $p$ is label-preserving, the two edges $\sigma(s_i)$ and $\sigma(t_i)$ have the same label and opposite orientations too. Observe that $\sigma(s_i)$ and $\sigma(t_i)$ are adjacent but distinct, since $\sigma$ is a reduced path. This means that $\sigma(s_i)$ and $\sigma(t_i)$ can't both belong to $\bcore{H}$ (because it is folded), and can't both belong to $\bcore{\gen{g}}$ (because it is folded too). Thus one of them has to belong to $\bcore{H}$ and the other to $\bcore{\gen{g}}$, and the conclusion follows.
\end{proof}

\begin{mylemma}\label{innermostbounded}
Let $\sigma:I_l\rar G$ be a cyclically reduced path with $\sigma(0)=\sigma(1)=*$ and such that $f\circ\sigma$ is homotopically trivial (relative to its endpoints); let $(s_1,t_1),...,(s_{l/2},t_{l/2})$ be a maximal cancellation process for $f\circ\sigma$. Suppose the equation $w\in\fI_g$ corresponding to $\sigma$ has degree $d$. Then the reduction process contains at most $2d$ innermost cancellations.
\end{mylemma}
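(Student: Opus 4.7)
The idea is to partition the path $\sigma:I_l\to G$ using its visits to the basepoint and to show that each innermost cancellation corresponds to a place where the path transitions between the two ``lobes'' of $G=\bcore{H}\vee\bcore{\gen{g}}$. The degree $d$ will then provide an upper bound on the number of transitions.

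First I would list the vertices $0=v_0<v_1<\dots<v_j=l$ of $I_l$ such that $\sigma(v_i)=*$ (including the endpoints, since $\sigma(0)=\sigma(1)=*$). These chop $\sigma$ into subpaths; call such a subpath an $H$-\emph{segment} if all its edges lie in $\bcore{H}$ and a $g$-\emph{segment} if they all lie in $\bcore{\gen{g}}$. Since $\bcore{H}$ and $\bcore{\gen{g}}$ meet only at $*$, and each segment's interior contains no preimage of $*$, each segment falls into exactly one of these two classes. Call a basepoint vertex $v_i$ (with $0<i<j$) a \emph{mixed vertex} if the segment immediately before it and the one immediately after it have different types. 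By Lemma \ref{innermostcouple}, every innermost cancellation $(s_k,t_k)$ is located at a mixed vertex (the common endpoint of $s_k$ and $t_k$), and since $I_l$ is a path, two distinct innermost cancellations cannot share their common endpoint; hence the number of innermost cancellations is at most the number of mixed vertices.

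Next I would count mixed vertices in terms of $g$-segments. Let $r$ denote the number of $g$-segments. Each $g$-segment has exactly two boundary vertices, and a mixed vertex is precisely one that is a boundary of a $g$-segment whose adjacent segment on the other side is an $H$-segment. Hence the number of mixed vertices is at most $2r$.

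The final step is to bound $r$ using the degree. Recall that $\bcore{\gen{g}}$ is either the cycle $\core{\gen{g}}$ (if $g$ is cyclically reduced, so $*$ sits on the cycle) or a tail attached at $*$ to the cycle $\core{\gen{g}}$. Any reduced loop in $\bcore{\gen{g}}$ based at $*$ must traverse the tail forward, wind around $\core{\gen{g}}$ at least once (otherwise it would backtrack on the tail, contradicting reducedness), and then traverse the tail backward. Each $g$-segment is exactly such a reduced loop, so it crosses every edge $e\in\core{\gen{g}}$ at least once. Fixing any such edge $e$ and applying Lemma \ref{degree} (using that $\sigma$ is cyclically reduced, so the corresponding equation is cyclically reduced of degree $d$), the total number of crossings of $e$ by $\sigma$ equals $d$. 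Since each of the $r$ $g$-segments contributes at least one crossing, we obtain $r\le d$, and therefore the number of innermost cancellations is at most $2r\le 2d$, as required. The only delicate point is making sure that within a single $g$-segment the reducedness of $\sigma$ in $G$ forces reducedness inside $\bcore{\gen{g}}$ and rules out segments that merely poke into the tail; this follows because any non-trivial reduced path from $*$ to $*$ in a ``lollipop'' graph must go around the loop at least once.
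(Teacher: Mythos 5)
Your proof is correct and follows essentially the same route as the paper's: both use Lemma \ref{innermostcouple} to place every innermost cancellation at a basepoint vertex where $\sigma$ switches between $\bcore{H}$ and $\bcore{\gen{g}}$, and then bound the number of such switches by $2d$. The only difference is bookkeeping — the paper counts the $x$-blocks in the cyclically reduced normal form $c_1x^{\alpha_1}\cdots c_rx^{\alpha_r}c_{r+1}$, giving at most $2r\le 2d$ switches, whereas you count $g$-segments between basepoint visits and bound their number by $d$ via Lemma \ref{degree} together with the winding argument in the lollipop graph $\bcore{\gen{g}}$.
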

\begin{proof}
As in the proof of Lemma \ref{degree}, write the equation $w$ as a cyclically reduced word
$$c_1x^{\alpha_1}c_2x^{\alpha_2}...c_rx^{\alpha_r}c_{r+1}$$
with $\alpha_1,...,\alpha_r\in\bZ\setminus\{0\}$ and $c_1,...,c_{r+1}\in H$. In the graph $G$ we have $\sigma=\theta(w)=\theta(c_1)\cdot\theta(x^{\alpha_1})\cdot...\cdot\theta(x^{\alpha_r})\cdot\theta(c_{r+1})$, where the $\cdot$ symbol denotes the concatenation of paths (without any homotopy). We see that $w$ has degree $d=\abs{\alpha_1}+...+\abs{\alpha_r}\ge r$ and, using Lemma \ref{innermostcouple}, that the path $\sigma$ contains at most $2r$ innermost cancellations. The conclusion follows.
\end{proof}


\subsection{Parallel cancellation}

In this subsection we introduce the parallel cancellation moves, which allow us to produce a shorter equation from a longer one. We give a characterization of which parallel cancellation moves preserve the degree of the equation. Recall that $I_l$ is the unit interval $[0,1]$ subdivided into $l$ segments, and recall that for an edge $s$ of $I_l$, we denote with $\omeno{s},\opiu{s}$ the endpoints of $s$, ordered on the interval in such a way that $\omeno{s}<\opiu{s}$.

\begin{mydef}\label{parallel}
Let $\sigma:I_l\rar G$ be a reduced path with $\sigma(0)=\sigma(1)=*$ and such that $f\circ\sigma$ is homotopically trivial; let $(s_1,t_1),...,(s_{l/2},t_{l/2})$ be a maximal reduction process for $f\circ\sigma$. We say that two couples $(s_\alpha,t_\alpha),(s_\beta,t_\beta)$ with $\alpha<\beta$ are \textbf{$\parallele$} if they satisfy the following conditions:

(i) The edges $s_\beta,s_\alpha,t_\alpha,t_\beta$ appear in this order on $I_l$.

(ii) The map $\sigma$ sends $s_\alpha,s_\beta$ to the same edge of $G$ crossed with the same orientation.

(iii) The map $\sigma$ sends $t_\alpha,t_\beta$ to the same edge of $G$ crossed with the same orientation.
\end{mydef}

The reason behind the definition of $\parallele$ couples is that they allow us to perform a cancellation move, which I now describe, that will be of fundamental importance in the proof of the main theorem. Let $(s_\alpha,t_\alpha),(s_\beta,t_\beta)$ be two parallel couples; we take the subgraph of $I_l$ given by the interval $[\omeno{s_\beta},\omeno{s_\alpha}]$ and collapse it to a point; we also take the subgraph of $I_l$ given by the interval $[\opiu{t_\alpha},\opiu{t_\beta}]$ and collapse it to a point (see figure \ref{cancellationmove}). We obtain a graph isomorphic to $I_{l'}$, and notice that $2\le l'\le l-2$ (because there are at least two edges that get collapsed, namely $s_\beta$ and $t_\beta$, and two that don't get collapsed, namely $s_\alpha$ and $t_\alpha$). We can define a map $\sigma':I_{l'}\rar G$ which is equal to $\sigma$, except on the collapsed interval $[\omeno{s_\beta},\omeno{s_\alpha}]$, where we set it equal to $\sigma(\omeno{s_\beta})=\sigma(\omeno{s_\alpha})$, and except on the collapsed interval $[\opiu{t_\alpha},\opiu{t_\beta}]$, where we set it equal to $\sigma(\opiu{t_\alpha})=\sigma(\opiu{t_\beta})$. This gives a well-defined combinatorial path $\sigma':I_{l'}\rar G$.

\begin{mylemma}[Parallel cancellation]\label{parallelcancellation}
Let $\sigma:I_l\rar G$ be a reduced path with $\sigma(0)=\sigma(1)=*$ and such that $f\circ\sigma$ is homotopically trivial; let $(s_1,t_1),...,(s_{l/2},t_{l/2})$ be a maximal reduction process for $f\circ\sigma$. Suppose for some $1\le\alpha<\beta\le l/2$ the couples $(s_\alpha,t_\alpha),(s_\beta,t_\beta)$ are $\parallele$ and define the map $\sigma':I_{l'}\rar G$ as above. Then $\sigma'$ is a reduced path with $\sigma'(0)=\sigma'(1)=*$ and $f\circ\sigma'$ is homotopically trivial. A maximal reduction process for $f\circ\sigma'$ can be obtained from $(s_1,t_1),...,(s_{l/2},t_{l/2})$ by removing the couples containing edges which get collapsed in the definition of $I_{l'}$.
\end{mylemma}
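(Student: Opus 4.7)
The plan is to verify the conclusions in the order stated: the endpoint condition, reducedness, and then that deleting the collapsed couples yields a reduction process for $f\circ\sigma'$, which will force null-homotopy.

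The endpoint condition $\sigma'(0)=\sigma'(1)=*$ is immediate: if $\omeno{s_\beta}>0$ then $0$ survives in $I_{l'}$ and $\sigma'(0)=\sigma(0)=*$; if $\omeno{s_\beta}=0$, then $0$ lies in the collapsed interval and $\sigma'$ is defined there to equal $\sigma(\omeno{s_\beta})=\sigma(0)=*$. The endpoint $1$ is handled identically. For reducedness, the only new adjacencies in $I_{l'}$ are the two produced by the collapses: $s_\alpha$ becomes adjacent to the edge $a$ of $I_l$ (if any) immediately preceding $s_\beta$, and $t_\alpha$ becomes adjacent to the edge (if any) immediately following $t_\beta$. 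Parallel condition (ii) gives $\sigma(s_\alpha)=\sigma(s_\beta)$ as oriented edges, so the reducedness of $\sigma$ at the pair $(a,s_\beta)$ transfers verbatim to the new pair $(a,s_\alpha)$ under $\sigma'$; parallel condition (iii) handles the other new adjacency symmetrically.

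The core of the argument is identifying the couples with both edges collapsed as a subset $K\subseteq\{1,\dots,l/2\}$ containing $\beta$ but not $\alpha$, and showing that deleting them from the process leaves a reduction process for $f\circ\sigma'$. I would first classify each couple $(s_i,t_i)$ with $i\neq\alpha,\beta$ by two applications of Lemma \ref{lemmino1}. Applied to $(s_i,t_i)$ against $(s_\beta,t_\beta)$, either $s_i,t_i$ both lie before $s_\beta$ or both after $t_\beta$ (hence outside both collapsed intervals), or $s_\beta<s_i<t_i<t_\beta$. In the nested subcase, applying Lemma \ref{lemmino1} to $(s_i,t_i)$ and $(s_\alpha,t_\alpha)$ yields three sub-possibilities, each of which places both $s_i$ and $t_i$ inside the union $[\omeno{s_\beta},\omeno{s_\alpha}]\cup[\opiu{t_\alpha},\opiu{t_\beta}]$. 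Since $(s_\beta,t_\beta)$ itself has both edges collapsed while $(s_\alpha,t_\alpha)$ has neither, exactly $l'/2$ couples survive the deletion.

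For this restricted sequence to be a reduction process for $f\circ\sigma'$, conditions (i), (ii), (iv) of Definition \ref{reductionprocess} are inherited directly; the subtle condition is (iii), and here I would invoke the monotonicity principle that collapsing more edges of the interval can only preserve or strengthen adjacency between any two fixed non-collapsed edges. In the original process, $s_k,t_k$ are adjacent after collapsing $\{s_j,t_j\,:\,j<k\}$; in the new process applied to $\sigma'$ (which is $\sigma$ modulo $\bigcup_{j\in K}\{s_j,t_j\}$), before cancelling the $k$-th surviving couple we have collapsed $\bigcup_{j\in K}\{s_j,t_j\}\cup\{s_j,t_j\,:\,j<k,\ j\notin K\}$, a superset of $\{s_j,t_j\,:\,j<k\}$, so $s_k,t_k$ remain adjacent. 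Since the new process has $l'/2$ pairs on $I_{l'}$ (which has $l'$ edges), Proposition \ref{extensionofprocess} forces case (i), yielding both that $f\circ\sigma'$ is null-homotopic and that the new process is maximal. The main obstacle is the double case analysis via Lemma \ref{lemmino1}: the picture of non-crossing arcs in Figure \ref{diagramreductionprocess} makes the dichotomy geometrically obvious, but the combinatorial translation requires care.
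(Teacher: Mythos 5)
Your architecture is the same as the paper's (reducedness at the two new adjacencies via parallel conditions (ii)--(iii), deletion of the fully collapsed couples, a monotonicity argument for condition (iii) of Definition \ref{reductionprocess}, and maximality plus null-homotopy via Proposition \ref{extensionofprocess}), but the central combinatorial step contains a false claim. In the subcase where $(s_i,t_i)$ is nested inside $(s_\beta,t_\beta)$, you assert that every sub-possibility coming from Lemma \ref{lemmino1} applied to $(s_i,t_i)$ and $(s_\alpha,t_\alpha)$ ``places both $s_i$ and $t_i$ inside the union $[\omeno{s_\beta},\omeno{s_\alpha}]\cup[\opiu{t_\alpha},\opiu{t_\beta}]$''. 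That is not true: when $i<\alpha$, Lemma \ref{lemmino1} allows the configuration $s_\alpha,s_i,t_i,t_\alpha$, in which both edges lie in the surviving middle segment between $s_\alpha$ and $t_\alpha$, not in the collapsed union. Such couples are unavoidable whenever $s_\alpha$ and $t_\alpha$ are not adjacent on $I_l$ (the edges strictly between them must be paired among themselves; see the couples sitting between $s_\alpha$ and $t_\alpha$ in Figure \ref{cancellationmove}). Taken literally, your classification would place these couples in the deleted set $K$; then the surviving couples would not exhaust the edges of $I_{l'}$, the count ``exactly $l'/2$ couples survive'' would fail, and the final appeal to case (i) of Proposition \ref{extensionofprocess} (which needs $2m=l'$ with every edge of $I_{l'}$ appearing in some couple) would break down. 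A smaller imprecision of the same kind: your first trichotomy against $(s_\beta,t_\beta)$ omits the configuration $s_i,s_\beta,t_\beta,t_i$ (possible when $i>\beta$), though that case is harmless since both edges survive.

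The correct conclusion to extract from the two applications of Lemma \ref{lemmino1} is the weaker dichotomy the paper uses: for every couple, either both of $s_i,t_i$ are collapsed or neither is. Concretely, couples lying strictly inside one of the two collapsed intervals, or with $(s_\alpha,t_\alpha)$ nested inside them (only possible for $i>\alpha$), are fully collapsed; couples lying outside $[\omeno{s_\beta},\opiu{t_\beta}]$, containing $(s_\beta,t_\beta)$, or nested inside $(s_\alpha,t_\alpha)$ (only possible for $i<\alpha$), fully survive. With this dichotomy in place of your stronger claim, the rest of your argument --- inheritance of conditions (i), (ii), (iv), the superset/monotonicity argument for condition (iii), and maximality together with null-homotopy of $f\circ\sigma'$ from Proposition \ref{extensionofprocess} --- is correct and coincides with the paper's proof.
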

\begin{proof}
Consider the map $\sigma':I_{l'}\rar G$ and we want to show that it is reduced. At the vertices in the interval $[0,\omeno{s_\beta})$, the local injectivity of $\sigma'$ immediately follows from the local injectivity of $\sigma$; the same holds for the vertices in the intervals $[\opiu{s_\alpha},\omeno{t_\alpha}]$ and $(\opiu{t_\beta},1]$. For the vertex of $I_{l'}$ corresponding to the collapsed interval $[\omeno{s_\beta},\omeno{s_\alpha}]$, the local injectivity of $\sigma'$ follows from the local injectivity of $\sigma$ at $\omeno{s_\beta}$ (and here we use the hypothesis that $\sigma$ sends $s_\beta$ and $s_\alpha$ to the same edge of $G$, crossed with the same orientation). Similarly, for the vertex of $I_{l'}$ corresponding to the collapsed interval $[\opiu{t_\alpha},\opiu{t_\beta}]$, the local injectivity of $\sigma'$ follows from the local injectivity of $\sigma$ at $\opiu{t_\beta}$. This shows that $\sigma'$ is reduced.

It is easy to see, using Lemma \ref{lemmino1}, that for every $1\le i\le m$ we have that either both or none of $s_i,t_i$ is collapsed to a point. Consider the sequence $(s_1,t_1),...,(s_{l/2},t_{l/2})$ and we remove the couple of edges that get collapsed (preserving the order of the other couples): the remaining couples contain the edges of $I_{l'}$, each appearing exactly once. We thus get a sequence $(q_1,r_1),...,(q_{l'/2},r_{l'/2})$ of couples of edges of $I_{l'}$. We want to prove that this is a reduction process for $\sigma'$ (the thesis then immediately follows).

We take a couple $(q_i,r_i)$ for some $1\le i\le l'/2$ and we want to prove that, if in $I_{l'}$ we collapse each of $q_1,r_1,...,q_{i-1},r_{i-1}$ to a point, the edges $q_i,r_i$ become adjacent. We have $(q_i,r_i)=(s_j,t_j)$ for some $1\le j\le l/2$. But In the sequence $(s_1,t_1),...,(s_{l/2},t_{l/2})$ we have that, collapsing each of $(s_1,t_1),...,(s_{j-1},t_{j-1})$, the edges $s_j,t_j$ become adjacent; of the couples $(s_1,t_1),...,(s_{j-1},t_{j-1})$, some get collapsed when passing from $I_l$ to $I_{l'}$, and the others are exactly the couples $(q_1,r_1),...,(q_{i-1},r_{i-1})$; thus, if we collapse $(q_1,r_1),...,(q_{i-1},r_{i-1})$ too, the two edges $q_i$ and $r_i$ become adjacent, as desired.

Finally, take a couple $(q_i,r_i)$ for some $1\le i\le l'/2$, and we want to prove that $f\circ\sigma'$ sends $q_i$ and $r_i$ to the same edge of $\fold{G}$ crossed with opposite orientation. But $(q_i,r_i)=(s_j,t_j)$ for some $1\le j\le l/2$, and $f\circ\sigma$ sends $s_j$ and $t_j$ to the same edge of $\fold{G}$ crossed with opposite orientation. Since $\sigma'$ is defined to coincide with $\sigma$ on the edges $q_i=s_j$ and $r_i=t_j$, we have that $f\circ\sigma'$ sends $q_i$ and $r_i$ to the same edge of $\fold{G}$ crossed with opposite orientation.

Thus $(q_1,r_1),...,(q_{l'/2},r_{l'/2})$ is a maximal reduction process for $f\circ\sigma'$, as desired.
\end{proof}

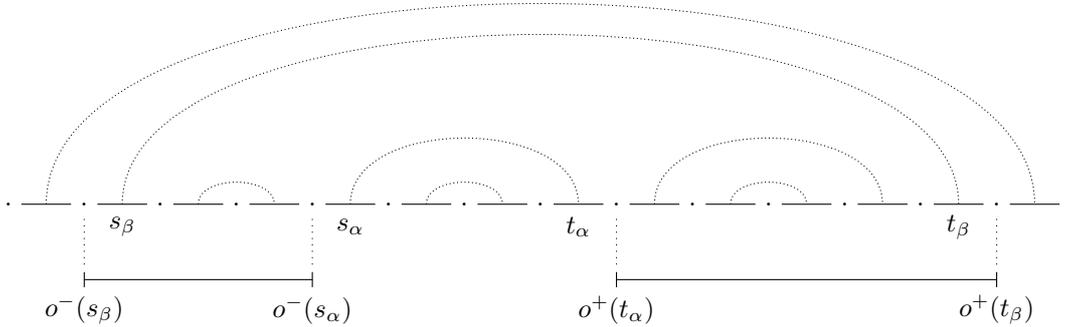
\begin{figure}[h!]
\centering
\begin{tikzpicture}
\node (0) at (0,0) {.};
\node (1) at (1,0) {.};
\node (2) at (2,0) {.};
\node (3) at (3,0) {.};
\node (4) at (4,0) {.};
\node (5) at (5,0) {.};
\node (6) at (6,0) {.};
\node (7) at (7,0) {.};
\node (8) at (8,0) {.};
\node (9) at (9,0) {.};
\node (10) at (10,0) {.};
\node (11) at (11,0) {.};
\node (12) at (12,0) {.};
\node (13) at (13,0) {.};
\node (14) at (14,0) {.};

\draw (0) to (1);
\draw (1) to (2);
\draw (2) to (3);
\draw (3) to (4);
\draw (4) to (5);
\draw (5) to (6);
\draw (6) to (7);
\draw (7) to (8);
\draw (8) to (9);
\draw (9) to (10);
\draw (10) to (11);
\draw (11) to (12);
\draw (12) to (13);
\draw (13) to (14);

\draw[out=90,in=90,looseness=0.7,densely dotted] (0.5,0) to (13.5,0);
\draw[out=90,in=90,looseness=0.7,densely dotted] (1.5,0) to (12.5,0);
\draw[out=90,in=90,looseness=1,densely dotted] (2.5,0) to (3.5,0);
\draw[out=90,in=90,looseness=1,densely dotted] (4.5,0) to (7.5,0);
\draw[out=90,in=90,looseness=1,densely dotted] (5.5,0) to (6.5,0);
\draw[out=90,in=90,looseness=1,densely dotted] (8.5,0) to (11.5,0);
\draw[out=90,in=90,looseness=1,densely dotted] (9.5,0) to (10.5,0);

\node (sb) at (1.5,-0.3) {$s_\beta$};
\node (sa) at (4.5,-0.3) {$s_\alpha$};
\node (ta) at (7.5,-0.3) {$t_\alpha$};
\node (tb) at (12.5,-0.3) {$t_\beta$};

\draw (1,-0.9) to (1,-1.1);
\draw (4,-0.9) to (4,-1.1);
\draw (1,-1) to (4,-1);
\node (sb1) at (1,-1.4) {$\omeno{s_\beta}$};
\node (sa1) at (4,-1.4) {$\omeno{s_\alpha}$};

\draw (8,-0.9) to (8,-1.1);
\draw (13,-0.9) to (13,-1.1);
\draw (8,-1) to (13,-1);
\node (ta1) at (8,-1.4) {$\opiu{t_\alpha}$};
\node (tb1) at (13,-1.4) {$\opiu{t_\beta}$};

\draw[dotted] (1,-0.8) to (1);
\draw[dotted] (4,-0.8) to (4);
\draw[dotted] (8,-0.8) to (8);
\draw[dotted] (13,-0.8) to (13);
\end{tikzpicture}
\caption{An example of a diagram for a maximal reduction process. The cancellation move collapses two intervals, which are painted below the interval, i.e. $[\omeno{s_\beta},\omeno{s_\alpha}]$ and $[\opiu{t_\alpha},\opiu{t_\beta}]$.}
\label{cancellationmove}
\end{figure}

The following two lemmas give us information about how the degree of an equation changes when we perform a parallel cancellation move on the corresponding path.

\begin{mylemma}\label{degreedecreases}
Let $\sigma:I_l\rar G$ and $\sigma':I_{l'}\rar G$ be cyclically reduced paths with $\sigma(0)=\sigma(1)=\sigma'(0)=\sigma'(1)=*$ and suppose $\sigma'$ is obtained from $\sigma$ by means of a cancellation move as described in Lemma \ref{parallelcancellation}. Then the degrees $d,d'$ of the corresponding equations $w,w'$ satisfy $d'\le d$.
\end{mylemma}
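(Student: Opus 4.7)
The plan is to reduce the inequality $d'\le d$ to a counting statement, via Lemma \ref{degree}. First I would fix an edge $e$ of the subgraph $\core{\gen{g}}\subseteq G$; such an edge exists in the only non-degenerate case, where $g\ne1$. Since both $\sigma$ and $\sigma'$ are assumed to be cyclically reduced, Lemma \ref{degree} identifies $d$ with the number of times $\sigma$ crosses the edge $e$ (in either direction), and $d'$ with the number of times $\sigma'$ crosses $e$. So the entire claim reduces to showing that the crossings of $e$ can only decrease under the cancellation move.

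Next I would unpack the construction of $\sigma'$ from the discussion preceding Lemma \ref{parallelcancellation}: $I_{l'}$ is the quotient of $I_l$ obtained by collapsing the two intervals $[\omeno{s_\beta},\omeno{s_\alpha}]$ and $[\opiu{t_\alpha},\opiu{t_\beta}]$ to points, and $\sigma'$ agrees with $\sigma$ on every edge of $I_l$ that survives in the quotient. In particular, the multiset of edges of $G$ traversed by $\sigma'$ is literally a sub-multiset of the multiset traversed by $\sigma$. Consequently, for the fixed edge $e$, the number of crossings by $\sigma'$ is at most the number of crossings by $\sigma$, which yields $d'\le d$.

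I don't expect any real obstacle: the content is essentially reading off the immediate consequence of Lemma \ref{degree}, together with the observation that the parallel cancellation move only removes edges from the domain. The only minor point to mention explicitly is that we are entitled to apply Lemma \ref{degree} to $\sigma'$ as well, which is guaranteed by the standing hypothesis in the lemma that $\sigma'$ is cyclically reduced.
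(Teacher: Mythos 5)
Your proposal is correct and is essentially the same argument as the paper's: fix an edge $e$ of $\core{\gen{g}}$, apply Lemma \ref{degree} to both cyclically reduced paths, and observe that since the domain of $\sigma'$ is the domain of $\sigma$ with some edges collapsed (and $\sigma'$ agrees with $\sigma$ on the surviving edges), the number of crossings of $e$ can only decrease. Nothing further is needed.
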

\begin{proof}
Fix an edge $e$ of $G$ belonging to $\core{\gen{g}}$ and apply Lemma \ref{degree}: the domain of $\sigma'$ is the domain of $\sigma$ with some edges collapsed, and thus the number of times $\sigma'$ crosses the edge $e$ is lesser or equal than the number of times $\sigma$ does.
\end{proof}

\begin{mydef}\label{defdegpres}
Let $\sigma:I_l\rar G$ and $\sigma':I_{l'}\rar G$ be reduced paths with $\sigma(0)=\sigma(1)=\sigma'(0)=\sigma'(1)=*$ and suppose $\sigma'$ is obtained from $\sigma$ by means of a cancellation move as described in Lemma \ref{parallelcancellation}. We say that the parallel cancellation move is \textbf{degree-preserving} if the two equations $w,w'$ corresponding to the paths $\sigma,\sigma'$ have the same degree $d=d'$.
\end{mydef}

\begin{mylemma}\label{degreepreserving}
Let $\sigma:I_l\rar G$ be a cyclically reduced path with $\sigma(0)=\sigma(1)=*$ and such that $f\circ\sigma$ is homotopically trivial; let $(s_1,t_1),...,(s_{l/2},t_{l/2})$ be a maximal reduction process for $f\circ\sigma$. Suppose there are two parallel couples $(s_\alpha,t_\alpha),(s_\beta,t_\beta)$ with $1\le\alpha<\beta\le l/2$, and let $\sigma':I_{l'}\rar G$ be the reduced path obtained with the cancellation move described in Lemma \ref{parallelcancellation}. If the cancellation move is degree-preserving, then the images by $\sigma$ of the two intervals $[\omeno{s_\beta},\omeno{s_\alpha}]$ and $[\opiu{t_\alpha},\opiu{t_\beta}]$ are contained in $\core{H}$; moreover, the two paths $\restr{f\circ\sigma}{[\omeno{s_\beta},\omeno{s_\alpha}]}$ and $\restr{f\circ\sigma}{[\opiu{t_\alpha},\opiu{t_\beta}]}$ are the same, but walked in reverse direction.
\end{mylemma}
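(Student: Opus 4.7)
The plan is to split the argument into two parts matching the two conclusions. First, I prove the image-containment via a degree-counting argument based on Lemma \ref{degree}. Second, I identify the two restrictions as reverses of each other by analysing which couples of the reduction process survive the parallel cancellation.

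For the image claim, fix any edge $e$ of $\core{\gen{g}}$. Since $\sigma$ is cyclically reduced, Lemma \ref{degree} says that $d$ equals the number of times $\sigma$ crosses $e$. Writing $J_1 = [\omeno{s_\beta},\omeno{s_\alpha}]$ and $J_2 = [\opiu{t_\alpha},\opiu{t_\beta}]$, the path $\sigma'$ is obtained by collapsing $J_1$ and $J_2$ to points, so it crosses $e$ exactly $d-k$ times, where $k$ counts the $e$-crossings of $\sigma$ inside $J_1 \cup J_2$; passing to the cyclic reduction of $\sigma'$ can only decrease the count further, so Lemma \ref{degree} gives $d' \le d - k$. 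The hypothesis $d = d'$ forces $k=0$, and letting $e$ vary over $\core{\gen{g}}$ shows that $\sigma(J_1)$ and $\sigma(J_2)$ use no edge of $\core{\gen{g}}$. Each $\sigma|_{J_i}$ is a reduced loop (based at $\sigma(\omeno{s_\beta})$ and $\sigma(\opiu{t_\alpha})$ respectively, by the parallelism condition); a short case analysis in $G = \bcore{H} \vee \bcore{\gen{g}}$ then shows that such a loop cannot use any edge of the stem of $\bcore{\gen{g}}$ either, since after removing $\core{\gen{g}}$ the stem becomes a dead end and the loop would be forced to backtrack. This places each image inside $\core{H}$.

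For the reversal claim, I invoke Lemma \ref{lemmino1} to classify the couples of the reduction process: any couple $(s_i,t_i)$ with an edge strictly inside $M = [\omeno{s_\alpha},\opiu{t_\alpha}]$ has both edges in $M$, and any couple with an edge in $J_1 \cup J_2$ has both edges in $J_1 \cup J_2$. For $(s_\beta,t_\beta)$ to be the $\beta$-th couple, every edge strictly between $s_\beta$ and $t_\beta$ must already have been cancelled by an earlier couple, which forces all couples internal to $M$ and all couples in $J_1 \cup J_2$ other than $(s_\beta,t_\beta)$ itself to have index strictly less than $\beta$. Restricting the first $\beta-1$ cancellations to $M$ alone yields a reduction process that fully cancels $f\circ\sigma|_M$, so this restriction is nullhomotopic in $R_n$; restricting them instead to the open sub-interval $(\opiu{s_\beta},\omeno{t_\beta})$ shows that $f\circ\sigma$ restricted to that interval is nullhomotopic as well. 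Writing $w_1 = f\circ\sigma|_{J_1}$ and $w_2 = f\circ\sigma|_{J_2}$, and using that $(f\circ\sigma)(s_\beta)$ and $(f\circ\sigma)(t_\beta)$ are inverses of each other in $R_n$, these two facts combine to yield the identity $w_1 \cdot w_2 = 1$ in $F_n$.

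By the image-containment already established, each $\sigma|_{J_i}$ factors through $\core{H} \subseteq \cov{H}$; since the labeling map restricted to $\cov{H}$ is a covering, in particular a local homeomorphism, both $w_1$ and $w_2$ are already reduced as words in the free group $F_n$. Two reduced words whose product is trivial in $F_n$ must be literal inverses of each other, which is exactly the statement that $f\circ\sigma|_{J_1}$ and $f\circ\sigma|_{J_2}$ are the same path walked in opposite directions. The main obstacle I foresee is the combinatorial bookkeeping in the middle paragraph: one needs to use the nesting structure from Lemma \ref{lemmino1} carefully to verify that the first $\beta-1$ couples really do restrict to full reduction processes on $M$ and on the open sub-interval $(\opiu{s_\beta},\omeno{t_\beta})$ independently.
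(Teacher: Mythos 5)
Your overall strategy works, but there is one concrete gap in the first half. The crossing count correctly forces $k=0$ and so excludes the edges of $\core{\gen{g}}$, and your dead-end argument excludes the edges of $\bcore{\gen{g}}$ outside $\core{\gen{g}}$; at that point you have only placed the images inside $\bcore{H}$, not inside $\core{H}$. The edges of $\bcore{H}\setminus\core{H}$ (the segment joining the basepoint to $\core{H}$) are never addressed, and the distinction matters: the lemma is used later (Definition \ref{insertionpath}, Lemma \ref{parallelinverses}) precisely through the requirement that the insertion data live in $\pi_1(\core{H},\cdot)$. The repair is your own dead-end argument run once more: once all edges of $\bcore{\gen{g}}$ are forbidden, the side of any edge of $\bcore{H}\setminus\core{H}$ containing the basepoint is a dead end for the loop. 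A second, smaller point: ``reduced loop forced to backtrack'' is not quite sufficient as stated, since a merely reduced loop whose first and last edges are the same bridge traversed in opposite directions can enter and leave a dead end without any internal backtrack (in principle this could happen with the bridge being $\sigma(s_\beta)$ itself). What saves you is that $\restr{\sigma}{[\omeno{s_\beta},\omeno{s_\alpha}]}$ is in fact cyclically reduced: the edge following it is $s_\alpha$, mapped to the same oriented edge as $s_\beta$, so reducedness of $\sigma$ at $\omeno{s_\alpha}$ forbids the last edge of the loop from being that edge reversed (and symmetrically at $\opiu{t_\alpha}$ for the other loop). With these two additions the first half is correct; it is organized differently from the paper, which instead argues that any offending edge forces the collapsed interval to cross every edge of $\core{\gen{g}}$.

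The second half is correct and follows a genuinely different route from the paper. The paper uses Lemma \ref{innermostcouple} (together with the containment in $\core{H}$) to show that no couple has both edges inside $[\omeno{s_\beta},\omeno{s_\alpha}]$, and then Lemma \ref{lemmino1} to show that the process pairs the edges of that interval with those of $[\opiu{t_\alpha},\opiu{t_\beta}]$ in reverse order, reading off the reversal edge by edge. You instead use only the nesting constraints of Lemma \ref{lemmino1}: every couple meeting the interior of $[\omeno{s_\beta},\opiu{t_\beta}]$ lies inside it and has index less than $\beta$, so the first $\beta-1$ couples restrict to complete reduction processes on $M$ and on the interval strictly between $s_\beta$ and $t_\beta$; I checked this bookkeeping, and it does go through, since for a nested couple the edges strictly between its two members remain in the relevant subinterval and belong to earlier couples, so condition (iii) of Definition \ref{reductionprocess} survives the restriction. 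This yields $w_1w_2=1$ in $F_n$, and foldedness of $\core{H}$ makes $w_1,w_2$ reduced words, hence literal inverses. Your route trades the paper's innermost-cancellation input for a free-group cancellation argument; both routes need the first half, the paper's to exclude couples internal to one side, yours to guarantee that the words $w_i$ are reduced.
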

\begin{proof}
By hypothesis, $\sigma(s_\beta)$ and $\sigma(s_\alpha)$ are the same edge $e$ of $G$ crossed with the same orientation. Suppose first that $e$ doesn't belong to $\core{H}$. Then any reduced path that starts and ends with $e$ has to cross all the edges of $\core{\gen{g}}$. Thus, by Lemma \ref{degree}, when we collapse the interval $[\omeno{s_\beta},\omeno{s_\alpha}]$ the degree strictly decreases.

Suppose now that $e$ belongs to $\core{H}$. Suppose there is an edge $s$ in the interval $[\omeno{s_\beta},\omeno{s_\alpha}]$ such that $\sigma(s)$ doesn't belong to $\core{H}$. If $\sigma(s)$ belongs to $\core{\gen{g}}$, then, by Lemma \ref{degree}, when we collapse the interval $[\omeno{s_\beta},\omeno{s_\alpha}]$ the degree strictly decreases. If $\sigma(s)$ doesn't belong to $\core{H}$ nor to $\core{\gen{g}}$, then at least one of the paths $\restr{\sigma}{[\omeno{s_\beta},\omeno{s}]}$ and $\restr{\sigma}{[\opiu{s},\omeno{s_\alpha}]}$ crosses all the edges of $\bcore{\gen{g}}$; in particular, by Lemma \ref{degree}, when collapsing the interval $[\omeno{s_\beta},\omeno{s_\alpha}]$ the degree strictly decreases. Thus the only possibility is that the path $\restr{\sigma}{[\omeno{s_\beta},\omeno{s_\alpha}]}$ is contained in $\core{H}$. Similarly, we obtain that the path $\restr{\sigma}{[\opiu{t_\alpha},\opiu{t_\beta}]}$ is contained in $\core{H}$ too. This proves the first part of the lemma.

For the second part, suppose the interval $[\omeno{s_\beta},\omeno{s_\alpha}]$ contains the two edges $s_i,t_i$ for some couple $(s_i,t_i)$ of our reduction process. Then the interval $[\omeno{s_\beta},\omeno{s_\alpha}]$ has to contain an innermost couple $(s_j,t_j)$ of our reduction process, i.e. a couple with $s_j,t_j$ adjacent on $I_l$. But by Lemma \ref{innermostcouple}, at least one of the edges $\sigma(s_j),\sigma(t_j)$ has to belong to $\bcore{\gen{g}}$, which is a contradiction with our assumptions. Thus the interval $[\omeno{s_\beta},\omeno{s_\alpha}]$ does not contain both $s_i,t_i$ for any couple $(s_i,t_i)$ of our reduction process. The same holds for the interval $[\opiu{t_\alpha},\opiu{t_\beta}]$.

Now, using Lemma \ref{lemmino1}, it is easy to see that the reduction process has to pair up the edges of the interval $[\omeno{s_\beta},\omeno{s_\alpha}]$ with the edges of the interval $[\opiu{t_\alpha},\opiu{t_\beta}]$, and the pairing has to be done in decreasing order. It follows that $\restr{f\circ\sigma}{[\omeno{s_\beta},\omeno{s_\alpha}]}$ and $\restr{f\circ\sigma}{[\opiu{t_\alpha},\opiu{t_\beta}]}$ are the same path, walked in reverse directions.
\end{proof}

\subsection{The minimum possible degree for a non-trivial equation}

Let $L$ be the number of edges of the graph $G$.

\begin{myprop}\label{findingparallels}
Let $w\in\fI_g$ be a cyclically reduced equation of degree $d$ and let $\sigma:I_l\rar G$ be the corresponding path; let $(s_1,t_1),...,(s_{l/2},t_{l/2})$ be any maximal reduction process for $f\circ\sigma$. Suppose $l>16L^2d$. Then the reduction process contains two $\parallele$ couples $(s_\alpha,t_\alpha),(s_\beta,t_\beta)$.
\end{myprop}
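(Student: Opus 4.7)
The plan is a double pigeonhole argument organised around a forest structure on the couples of the reduction process.

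First, I would organise the $l/2$ couples into a rooted forest $\mathcal{F}$. By Lemma \ref{lemmino1}, for any two distinct couples $(s_\alpha,t_\alpha),(s_\beta,t_\beta)$ with $\alpha<\beta$, either they are disjoint on $I_l$ or $(s_\alpha,t_\alpha)$ is strictly nested inside $(s_\beta,t_\beta)$; in particular two couples can never interleave, so defining the parent of a couple to be the smallest enclosing couple yields a well-defined rooted forest $\mathcal{F}$ in which parents always have strictly larger index than their children. The leaves of $\mathcal{F}$ are exactly the innermost cancellations: if some edge $u$ of $I_l$ lies strictly between $s_i$ and $t_i$, then its partner $u'$ in the reduction process cannot lie outside the interval $[s_i,t_i]$ (this would produce one of the interleaving configurations forbidden by Lemma \ref{lemmino1}), so $u,u'$ form a couple nested inside $(s_i,t_i)$. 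By Lemma \ref{innermostbounded} the forest $\mathcal{F}$ therefore has at most $2d$ leaves.

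Next, I would invoke heavy path decomposition: any rooted forest with $K$ leaves is the vertex-disjoint union of exactly $K$ downward paths, one terminating at each leaf. Applied to $\mathcal{F}$ this partitions all $l/2$ couples among at most $2d$ downward paths, so by pigeonhole some path $C$ contains at least $\lceil l/(4d)\rceil$ couples; the hypothesis $l>16L^2d$ then forces $|C|\ge 4L^2+1$.

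Finally, I would pigeonhole on \emph{types}: assign to each couple $(s_i,t_i)$ the ordered pair $(\sigma(s_i),\sigma(t_i))$ of signed edges of $G$ (each edge together with the orientation in which $\sigma$ traverses it). Since $G$ has $L$ edges there are at most $(2L)^2=4L^2$ possible types, so along $C$ two couples $(s_\alpha,t_\alpha),(s_\beta,t_\beta)$ with indices $\alpha<\beta$ must share a type. Being on the same downward path of $\mathcal{F}$ they are nested, and by the parent-larger-index convention the order on $I_l$ is $s_\beta,s_\alpha,t_\alpha,t_\beta$; combined with the same-type property this is exactly the content of Definition \ref{parallel}, so $(s_\alpha,t_\alpha),(s_\beta,t_\beta)$ are parallel. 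The only step requiring genuine care is the forest construction and the identification of its leaves with innermost cancellations; the rest is clean bookkeeping plus two applications of pigeonhole.
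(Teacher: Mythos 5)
Your proof is correct and is essentially the paper's argument: both are double pigeonhole counts on the same two quantities, the $4L^2$ edge-orientation types and the at most $2d$ innermost cancellations of Lemma \ref{innermostbounded}, with nesting forced by Lemma \ref{lemmino1}. You merely run the two pigeonholes in the opposite order (first grouping couples into at most $2d$ nested chains anchored at innermost cancellations via the forest/path decomposition, then pigeonholing on types within a long chain), whereas the paper first extracts $2d+1$ same-type couples and then observes two of them must contain the same innermost cancellation; the bound $l>16L^2d$ and the conclusion are identical.
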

\begin{proof}
To each couple $(s_i,t_i)$ we associate the quadruple of edges $(\sigma(s_i),\epsilon,\sigma(t_i),\delta)$ where $\sigma(s_i),\sigma(t_i)$ are edges of $G$ and $\epsilon,\delta\in\{+1,-1\}$ tell us the orientation with which $\sigma(s_i)$ and $\sigma(t_i)$ cross their images. There are $l/2$ couples that are sent to $4L^2$ possible quadruples; but by hypothesis we have $l/2>4L^2\cdot 2d$, so we can find at least $2d+1$ couples $(s_{i_1},t_{i_1}),...,(s_{i_{2d+1}},t_{i_{2d+1}})$ with $i_1<...<i_{2d+1}$ which are sent to the same quadruple; this means that $\sigma$ sends $s_{i_1},...,s_{i_{2d+1}}$ all to the same edge of $G$ crossed with the same orientation, and $t_{i_1},...,t_{i_{2d+1}}$ all to the same edge of $G$ crossed with the same orientation.

Each of the couples $(s_{i_k},t_{i_k})$ has to contain an innermost cancellation, i.e. there is a cancellation $(q_k,r_k)$ with $\opiu{s_{i_k}}\le\opiu{q_k}=\omeno{r_k}\le\omeno{t_{i_k}}$. By Lemma \ref{innermostbounded} there are at most $2d$ innermost cancellations: since we have $2d+1$ couples $(s_{i_1},t_{i_1}),...,(s_{i_{2d+1}},t_{i_{2d+1}})$, two of them, let's say $(s_{i_j},t_{i_j})$ and $(s_{i_k},t_{i_k})$ with $j<k$, have to contain the same innermost cancellation $(q_j,r_j)=(q_k,r_k)$. But this forces $s_{i_j},s_{i_k},t_{i_k},t_{i_j}$ to appear in this order on the interval $I_l$. Thus the two couples $(s_{i_j},t_{i_j}),(s_{i_k},t_{i_k})$ are parallel, as desired.
\end{proof}
%

We are now ready to prove Theorem \ref{main}.

\begin{proof}[Proof of Theorem \ref{main}]
Let $w\in\fI_g$ be a non-trivial equation of degree $d_{min}$, and let $\sigma:I_l\rar G$ be the corresponding path. Suppose also that, between the equations of degree $d_{min}$, the equation $w$ has the property that the length $l$ of the corresponding path is the minimum possible. This in particular implies that $w$ is cyclically reduced.

Assume by contradiction that $l>16L^2d_{min}$. Then take any maximal reduction process $(s_1,t_1),...,(s_{l/2},t_{l/2})$ for $f\circ\sigma$, and by Proposition \ref{findingparallels} we can find two parallel couples $(s_i,t_i),(s_j,t_j)$. We perform the corresponding parallel cancellation move (according to Lemma \ref{parallelcancellation}) and we obtain a path $\sigma':I_{l'}\rar G$, with corresponding equation $w'\in\fI_g$ with $w'\not=1$. Notice that $l'<l$; moreover, by Lemma \ref{degreedecreases}, the degree $d'$ of $w'$ satisfies $d'\le d_{min}$, but since $d_{min}$ is the minimum possible this implies $d'=d_{min}$. But then $l'<l$ contradicts the minimality of $l$. This proves the theorem.
\end{proof}

\begin{mycor}\label{algorithm}
There is an algorithm that, given $H$ and $g$ such that $g$ depends on $H$, produces a non-trivial equation $w\in\fI_g$ of minimum possible degree.
\end{mycor}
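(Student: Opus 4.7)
The corollary is an immediate algorithmic consequence of Theorem \ref{main} combined with Theorem \ref{idealfingen}: the former gives an a priori length bound for some minimum-degree representative (in terms of $d_{min}$ and the computable constant $L$), while the latter lets us compute a concrete upper bound for $d_{min}$ itself. Once both are in place, brute-force enumeration finishes the job.

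The plan is as follows. First, from the input $H$ and $g$, algorithmically construct the pointed $\grafo$ $G = \bcore{H} \vee \bcore{\gen{g}}$ as in Section \ref{SectionStallings}, and record its number of edges $L$. Since $g$ depends on $H$, Theorem \ref{idealfingen} produces a finite non-empty set of normal generators for $\fI_g$; pick any one of them, call it $w_0$, and let $D$ be the degree of its cyclic reduction. Then $d_{min} \le D$, so by Theorem \ref{main} there exists a non-trivial equation in $\fI_g$ of degree $d_{min}$ whose corresponding reduced path in $G$ has length at most $16L^2 d_{min} \le 16L^2 D$. This is the key observation that makes the search space finite and explicit.

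Next, enumerate every reduced combinatorial loop $\sigma: I_l \to G$ based at $*$ with $2 \le l \le 16L^2 D$; since $G$ is finite this is a finite list that can be generated mechanically. For each such $\sigma$, read off the associated element $w = \theta^{-1}([\sigma]) \in H*\gen{x}$ by walking along $\sigma$ as described at the end of the proof of Theorem \ref{idealfingen} (writing $x^{\pm 1}$ for edges in $\bcore{\gen{g}}$ and $a_i^{\pm 1}$ for edges in $\bcore{H}$). To decide whether $w \in \fI_g$, compose $\sigma$ with the labeling map $f: G \to R_n$ and check whether the resulting word in $a_1, \ldots, a_n$ is trivial in $F_n$: this is just free reduction. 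For those $\sigma$ that pass the test, compute the degree of $w$ either directly from its cyclic reduction or, equivalently, via Lemma \ref{degree}, by cyclically reducing $\sigma$ and counting crossings of a fixed edge of $\core{\gen{g}}$. Output any $w$ realizing the minimum.

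There is really no hard part here: correctness follows because Theorem \ref{main} guarantees that a minimum-degree representative lies inside the finite enumeration window $l \le 16L^2 D$, and every subroutine invoked (building $G$, invoking Theorem \ref{idealfingen}, enumerating reduced loops in a finite graph, free reduction in $F_n$, cyclic reduction, counting crossings) is manifestly effective. The only point worth emphasizing is why the a priori bound is available: without Theorem \ref{idealfingen} we would know only that the search \emph{eventually} succeeds, whereas combining the two theorems yields a halting procedure with an explicit termination certificate.
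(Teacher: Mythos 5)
Your proposal is correct and follows essentially the same route as the paper: obtain an upper bound $D$ on $d_{min}$ from some non-trivial element of $\fI_g$, then use the bound $16L^2 d_{min}\le 16L^2 D$ of Theorem \ref{main} to brute-force over all reduced loops in $G$ of bounded length, testing membership by free reduction of $f\circ\sigma$. Your only addition is to make explicit that the initial equation can be taken from the normal generating set produced by Theorem \ref{idealfingen}, which is a harmless (indeed helpful) elaboration of the paper's ``take any non-trivial equation in $\fI_g$''.
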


\begin{proof}[Algorithm]
We first produce an upper bound $D$ on the minimum degree of an equation in $\fI_g$; this is done for example by taking any non-trivial equation in $\fI_g$, and taking its degree $D$. Given this upper bound $D$, we take all the non-trivial reduced paths $\sigma:I_l\rar G$ from the basepoint to itself and of length $l\le16L^2D$. For each such path $\sigma$, we check whether $f\circ\sigma$ is homotopically trivial (in linear time on a pushdown automaton, with a free reduction process), and we compute the degree of the corresponding equation $w$. We take the minimum of all the degrees of those equations: this is also the minimum possible degree for a non-trivial equation in $\fI_g$.
\end{proof}

\section{The set of minimum-degree equations}

In this section we describe a parallel insertion move and we show that it is an inverse to the degree-preserving cancellation moves. We also provide a few lemmas that help us manipulate sequences of insertion moves. The aim of this section is to provide an explicit characterization of the set of all the equations of minimum possible degree (and more generally, of the set of all the equations of a certain fixed degree).

\subsection{Parallel insertion}

Observe that, for every vertex $v\in\core{H}$, the group $\pi_1(\core{H},v)$ can be seen as a subgroup of $F_n$, by means of the injective map $\pi_1(f):\pi_1(\core{H},v)\rar\pi_1(R_n,*)$, where $f:\core{H}\rar R_n$ is the labeling map.

\begin{mydef}\label{insertionpath}
Let $\sigma:I_l\rar G$ be a reduced path with $\sigma(0)=\sigma(1)=*$ and such that $f\circ\sigma$ is homotopically trivial; let $(s_1,t_1),...,(s_{l/2},t_{l/2})$ be a maximal reduction process for $f\circ\sigma$. Fix a couple $(s_\alpha,t_\alpha)$ such that $\sigma(s_\alpha)$ and $\sigma(t_\alpha)$ belong to $\core{H}$; an element $u\in F_n$ is called \textbf{$\inserzione$ for $\sigma$ at $(s_\alpha,t_\alpha)$} if it satisfies the following conditions:

(i) $u$ belongs to the subgroup $\pi_1(\core{H},\sigma(\omeno{s_\alpha}))\cap\pi_1(\core{H},\sigma(\opiu{t_\alpha}))$ of $F_n$.

(ii) $u$ begins with the label of $\sigma(s_\alpha)$, if $\sigma$ crosses $\sigma(s_\alpha)$ with the same orientation of the labeling, or with the inverse of that label, if $\sigma$ crosses $\sigma(s_\alpha)$ with opposite orientation to the labeling.

(iii) $u$ is cyclically reduced.
\end{mydef}

Let $u$ be a $\inserzione$ at $(s_\alpha,t_\alpha)$ for the path $\sigma$. Then there is a unique reduced path $\tau_1:I_r\rar G$ representing $u\in\pi_1(\core{H},\sigma(\omeno{s_\alpha}))$; similarly, there is a unique reduced path $\tau_2:I_r\rar G$ representing $\ol{u}\in\pi_1(\core{H},\sigma(\opiu{t_\alpha}))$. These two paths have the same length $r$, which is also the length of the word $u$. Now cut $I_l$ at the two points $\omeno{s_\alpha}$ and $\opiu{t_\alpha}$, and insert an interval of length $r$ at each of these two cuts, in order to obtain an interval $I_{l+2r}$; define the map $\sigma':I_{l+2r}\rar G$ which is equal to $\sigma$ on the edges that belonged to $I_l$, and is equal to $\tau_1$ on the interval added at the cut at $\omeno{s_\alpha}$, and is equal to $\tau_2$ on the interval added at the cut at $\opiu{t_\alpha}$; see also figure \ref{insertionmove}.

\begin{mylemma}[Parallel insertion]\label{parallelinsertion}
Let $\sigma:I_l\rar G$ be a reduced path with $\sigma(0)=\sigma(1)=*$ and such that $f\circ\sigma$ is homotopically trivial; let $(s_1,t_1),...,(s_{l/2},t_{l/2})$ be a maximal reduction process for $f\circ\sigma$. Let $(s_\alpha,t_\alpha)$ be a couple such that $\sigma(s_\alpha)$ and $\sigma(t_\alpha)$ belong to $\core{H}$ and let $u$ be an $\inserzione$ for $\sigma$ at $(s_\alpha,t_\alpha)$. Let $\sigma':I_{l+2r}\rar G$ be the path defined as above. Then $\sigma'$ is a reduced path with $\sigma'(0)=\sigma'(1)=*$ and $f\circ\sigma'$ is homotopically trivial. Moreover, there is a maximal reduction process for $f\circ\sigma'$ containing the couples $(s_1,t_1),...,(s_{l/2},t_{l/2})$.
\end{mylemma}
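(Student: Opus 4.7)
The plan is to verify, in order, that (a) $\sigma'(0)=\sigma'(1)=*$ and $\sigma'$ is reduced, (b) $f\circ\sigma'$ is null-homotopic in $R_n$, and (c) a maximal reduction process for $f\circ\sigma'$ extending $(s_1,t_1),\ldots,(s_{l/2},t_{l/2})$ exists. Basepoint preservation is immediate because $\tau_1$ and $\tau_2$ are loops at $\sigma(\omeno{s_\alpha})$ and $\sigma(\opiu{t_\alpha})$. For reducedness, local injectivity at any vertex in the interior of one of the original pieces of $\sigma$ or of $\tau_1,\tau_2$ is inherited from the corresponding map being reduced, so only the four junction vertices where an inserted loop meets the original path require genuine checking.

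At those junctions I would argue as follows. Condition (ii) of Definition \ref{insertionpath}, together with the foldedness of $\core{H}$, forces the first edge of $\tau_1$ to coincide, as a directed edge of $G$, with $\sigma(s_\alpha)$ traversed in the direction $\sigma$ uses; hence the junction before $\tau_1$ is locally identical to the original junction at $\omeno{s_\alpha}$ in $\sigma$ and is reduced. The junction after $\tau_1$ is exactly the cyclic basepoint of $\tau_1$, so reducedness here is equivalent to $u_r\ne u_1^{-1}$, which is cyclic reducedness (iii). For the two junctions flanking $\tau_2$ the auxiliary fact I would use is that the pairing of $(s_\alpha,t_\alpha)$ in the reduction process forces $\sigma(s_\alpha)$ and $\sigma(t_\alpha)$ to carry the same label with opposite $\sigma$-orientations; combined with the foldedness of $\core{H}$, this identifies the last edge of $\tau_2$ (which, since $\tau_2$ represents $\ol{u}$, corresponds to the letter $u_1^{-1}$) with $\sigma(t_\alpha)$. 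The junction after $\tau_2$ is then equivalent to the original junction at $\opiu{t_\alpha}$ and is reduced, while the junction before $\tau_2$ is non-backtracking again by cyclic reducedness of $u$.

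For the null-homotopy of $f\circ\sigma'$, the key input is Lemma \ref{lemmino1}: any pair $(s_\beta,t_\beta)$ with $\beta<\alpha$ is either disjoint from or strictly nested inside $(s_\alpha,t_\alpha)$, and condition (iii) of the reduction process forces the nested ones to pair up precisely the edges strictly between $s_\alpha$ and $t_\alpha$. Hence these nested pairs form a maximal reduction process for $f\circ\sigma|_{[\opiu{s_\alpha},\omeno{t_\alpha}]}$ that collapses it completely, so by Proposition \ref{extensionofprocess} this sub-path is null-homotopic in $R_n$; together with the pair $(s_\alpha,t_\alpha)$ itself, the whole loop $f\circ\sigma|_{[\omeno{s_\alpha},\opiu{t_\alpha}]}$ is trivial in $F_n=\pi_1(R_n,*)$, and the inserted contributions $u$ and $\ol{u}$ then sit adjacent in $F_n$ and cancel, yielding $[f\circ\sigma']=[f\circ\sigma]=1$. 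To extend the reduction process I would insert $r$ new couples $(e_i,f_{r+1-i})$ pairing the $i$-th edge of $\tau_1$ with the $(r+1-i)$-th edge of $\tau_2$, placing them immediately after the old couple of index $\alpha$ and ordering them from innermost ($i=r$) to outermost ($i=1$); condition (iv) of Definition \ref{reductionprocess} holds because $\tau_2$ represents $\ol{u}$ so the matched letters are inverses in $R_n$, and condition (iii) is verified stagewise---after processing old couples $1,\ldots,\alpha$ the old middle of $\sigma$ fully collapses, making the innermost new pair adjacent; each subsequent new pair becomes adjacent after its predecessor is collapsed; and the remaining old couples $\alpha+1,\ldots,l/2$ then inherit adjacency from the original process since the inserted regions are entirely gone. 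Maximality follows from the length count $l+2r=2((l/2)+r)$. The most delicate part of the argument is the junction analysis on the $\tau_2$ side: the definition of $\inserzione$ constrains only the first letter of $u$ via $\sigma(s_\alpha)$, so the corresponding control at $\sigma(\opiu{t_\alpha})$ is not given directly and must be extracted from the reduction-process pairing together with foldedness of $\core{H}$.
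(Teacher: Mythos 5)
Your proposal is correct and follows essentially the same route as the paper: you check local injectivity of $\sigma'$ at the four junction vertices (using condition (ii), foldedness of $\core{H}$, condition (iv) of the reduction process at $(s_\alpha,t_\alpha)$, and cyclic reducedness of $u$ — the paper states this more tersely), and you exhibit the same explicit maximal reduction process, namely the old couples with the $r$ new couples pairing the $i$-th edge of $\tau_1$ with the $(r+1-i)$-th edge of $\tau_2$, inserted right after the couple of index $\alpha$ and ordered from innermost outward, with maximality from the length count. The only difference is that your separate null-homotopy argument in step (b) is redundant, since exhibiting a reduction process of full length $l+2r$ already gives triviality of $f\circ\sigma'$ by Proposition \ref{extensionofprocess}(i), which is exactly how the paper concludes.
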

\begin{proof}
Of course we have $\sigma'(0)=\sigma'(1)=*$. The fact that $\sigma'$ is reduced follows from the fact that $\sigma$ is reduced, and from the fact that $u$ is cyclically reduced (here it is important that $u$ is cyclically reduced and begins with the label of $\sigma(s_\alpha)$ or with its inverse: otherwise the local injectivity of $\sigma'$ may fail at $\omeno{s_\alpha}$). Let $e_r,...,e_1$ be the edges of the interval which is the domain of $\tau_1$ and let $e_1',...,e_r'$ be the edges of the interval which is the domain of $\tau_2$; we mean that $e_r,...,e_1$ and $e_1',...,e_r'$ appear in this order on $I_{l+2r}$. Then a maximal reduction process for $f\circ\sigma'$ is given by $(s_1,t_1),...,(s_\alpha,t_\alpha),(e_1,e_1'),...,(e_r,e_r'),(s_{\alpha+1},t_{\alpha+1 }),...,(s_{l/2},t_{l/2})$, and in particular $f\circ\sigma'$ is homotopically trivial, as desired.
\end{proof}

\begin{myrmk}
Notice that these moves of parallel insertion depend on the existence of an element $u\in\pi_1(\core{H},\sigma(\omeno{s_\alpha}))\cap\pi_1(\core{H},\sigma(\opiu{t_\alpha}))$ with some specific properties. The two subgroups $\pi_1(\core{H},\sigma(\omeno{s_\alpha}))$ and $\pi_1(\core{H},\sigma(\opiu{t_\alpha}))$ are both conjugates of $H$, so there are cases where the possibilities for $u$ are very limited (for example if $H$ is malnormal in $F_n$, meaning that every two distinct conjugates of $H$ have trivial intersection). In any case it is possible that $\sigma(\omeno{s_\alpha})=\sigma(\opiu{t_\alpha})$, giving the possibility for at least some insertion moves to be performed.
\end{myrmk}

\begin{figure}[h!]
\centering
\begin{tikzpicture}[scale=1]
\node (0) at (0,0) {.};
\node (1) at (1,0) {.};
\node (2) at (2,0) {.};
\node (3) at (3,0) {.};
\node (4) at (4,0) {.};
\node (5) at (5,0) {.};
\node (6) at (6,0) {.};
\node (7) at (7,0) {.};
\node (8) at (8,0) {.};
\draw (0) to (1);
\draw (1) to (2);
\draw (2) to (3);
\draw (3) to (4);
\draw (4) to (5);
\draw (5) to (6);
\draw (6) to (7);
\draw (7) to (8);

\draw[out=90,in=90,looseness=0.7,densely dotted] (0.5,0) to (7.5,0);
\draw[out=90,in=90,looseness=1,densely dotted] (1.5,0) to (4.5,0);
\draw[out=90,in=90,looseness=1,densely dotted] (2.5,0) to (3.5,0);
\draw[out=90,in=90,looseness=1,densely dotted] (5.5,0) to (6.5,0);

\node (sa) at (1.5,-0.3) {$s_\alpha$};
\node (ta) at (4.5,-0.3) {$t_\alpha$};

\begin{scope}[yshift=-3cm]
\node (0i) at (-0.3,0) {.};
\node (1i') at (0.7,0) {.};
\node (1i) at (1,0) {.};
\node (2i) at (2,0) {.};
\node (3i) at (3,0) {.};
\node (4i) at (4,0) {.};
\node (5i) at (5,0) {.};
\node (5i') at (5.3,0) {.};
\node (6i) at (6.3,0) {.};
\node (7i) at (7.3,0) {.};
\node (8i) at (8.3,0) {.};
\draw (0i) to (1i');
\draw (1i) to (2i);
\draw (2i) to (3i);
\draw (3i) to (4i);
\draw (4i) to (5i);
\draw (5i') to (6i);
\draw (6i) to (7i);
\draw (7i) to (8i);

\draw[out=90,in=90,looseness=0.7,densely dotted] (0.2,0) to (7.8,0);
\draw[out=90,in=90,looseness=1,densely dotted] (1.5,0) to (4.5,0);
\draw[out=90,in=90,looseness=1,densely dotted] (2.5,0) to (3.5,0);
\draw[out=90,in=90,looseness=1,densely dotted] (5.8,0) to (6.8,0);

\node (sai) at (1.5,-0.3) {$s_\alpha$};
\node (tai) at (4.5,-0.3) {$t_\alpha$};

\node (9i) at (-1.8,-1) {.};
\node (10i) at (-0.8,-1) {.};
\node (11i) at (0.2,-1) {.};
\node (12i) at (1.2,-1) {.};
\node (13i) at (4.8,-1) {.};
\node (14i) at (5.8,-1) {.};
\node (15i) at (6.8,-1) {.};
\node (16i) at (7.8,-1) {.};

\draw (9i) to (10i);
\draw (10i) to (11i);
\draw (11i) to (12i);
\draw (13i) to (14i);
\draw (14i) to (15i);
\draw (15i) to (16i);
\draw[densely dashed] (9i) to (1i');
\draw[densely dashed] (12i) to (1i);
\draw[densely dashed] (13i) to (5i);
\draw[densely dashed] (16i) to (5i');
\draw[->] (-1.8,-1.2) to node[below]{$u$} (1.2,-1.2);
\draw[->] (7.8,-1.2) to node[below]{$u$} (4.8,-1.2);
\end{scope}

\begin{scope}[yshift=-7.7cm,xshift=-3cm]
\node (0j) at (0,0) {.};
\node (1j) at (1,0) {.};
\node (2j) at (2,0) {.};
\node (3j) at (3,0) {.};
\node (4j) at (4,0) {.};
\node (5j) at (5,0) {.};
\node (6j) at (6,0) {.};
\node (7j) at (7,0) {.};
\node (8j) at (8,0) {.};
\node (9j) at (9,0) {.};
\node (10j) at (10,0) {.};
\node (11j) at (11,0) {.};
\node (12j) at (12,0) {.};
\node (13j) at (13,0) {.};
\node (14j) at (14,0) {.};
\draw (0j) to (1j);
\draw (1j) to (2j);
\draw (2j) to (3j);
\draw (3j) to (4j);
\draw (4j) to (5j);
\draw (5j) to (6j);
\draw (6j) to (7j);
\draw (7j) to (8j);
\draw (8j) to (9j);
\draw (9j) to (10j);
\draw (10j) to (11j);
\draw (11j) to (12j);
\draw (12j) to (13j);
\draw (13j) to (14j);

\draw[out=90,in=90,looseness=0.6,densely dotted] (0.5,0) to (13.5,0);
\draw[out=90,in=90,looseness=0.9,densely dotted] (4.5,0) to (7.5,0);
\draw[out=90,in=90,looseness=1,densely dotted] (5.5,0) to (6.5,0);
\draw[out=90,in=90,looseness=1,densely dotted] (11.5,0) to (12.5,0);

\draw[out=90,in=90,looseness=0.7,densely dotted] (1.5,0) to (10.5,0);
\draw[out=90,in=90,looseness=0.75,densely dotted] (2.5,0) to (9.5,0);
\draw[out=90,in=90,looseness=0.8,densely dotted] (3.5,0) to (8.5,0);

\node (sai) at (4.5,-0.3) {$s_\alpha$};
\node (tai) at (7.5,-0.3) {$t_\alpha$};
\draw[->] (1,-0.2) to node[below]{$u$} (4,-0.2);
\draw[->] (11,-0.2) to node[below]{$u$} (8,-0.2);
\end{scope}

\end{tikzpicture}
\caption{An example of an insertion move. In the image above, we can see a diagram for a maximal reduction process for $\sigma$. In the image in the middle, we see the two cuts at $\omeno{s_\alpha}$ and $\opiu{t_\alpha}$ and the two pieces $u$ and $\ol{u}$ ready to be inserted. In the image below, we see the result after the insertion move, and a diagram for a maximal reduction process for $\sigma'$.}
\label{insertionmove}
\end{figure}
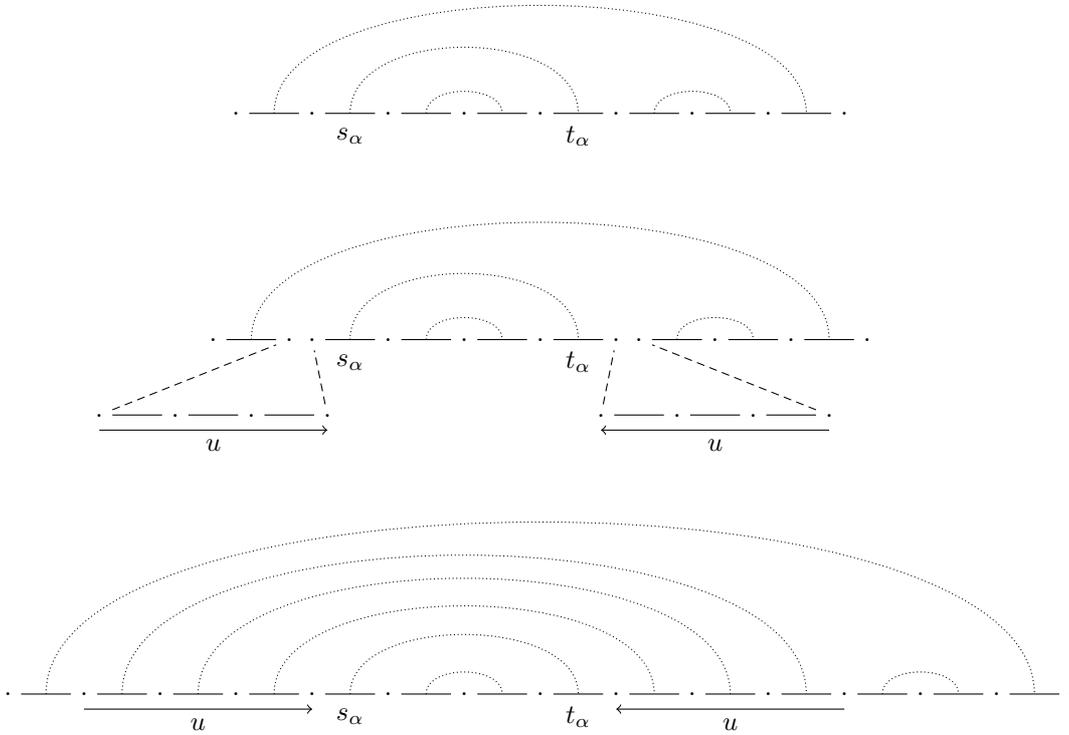

The following two lemmas show that the parallel insertion moves of Lemma \ref{parallelinsertion} are essentially the inverse of the parallel cancellation moves of Lemma \ref{parallelcancellation} which are degree-preserving as in Definition \ref{defdegpres}.

\begin{mylemma}\label{parallelinverses}
Let $\sigma:I_l\rar G$ be a cyclically reduced path with $\sigma(0)=\sigma(1)=*$ and such that $f\circ\sigma$ is homotopically trivial; let $(s_1,t_1),...,(s_{l/2},t_{l/2})$ be a maximal reduction process for $f\circ\sigma$. Suppose there are two parallel couples $(s_\alpha,t_\alpha),(s_\beta,t_\beta)$ and let $\sigma':I_{l'}\rar G$ be the path obtained with the cancellation move of Lemma \ref{parallelcancellation}. Suppose that the cancellation move is degree-preserving, and let $u$ be the word that we read when going along $\sigma([\omeno{s_\beta},\omeno{s_\alpha}])$. Then $\sigma$ can be obtained from $\sigma'$ with an insertion move as described in Lemma \ref{parallelinsertion}, using the $\inserzione$ $u$ for $\sigma'$ at $(s_\alpha,t_\alpha)$.
\end{mylemma}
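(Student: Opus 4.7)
The plan is to verify the three conditions of Definition \ref{insertionpath} for $u$, and then check that performing the insertion move recovers $\sigma$ on the nose.

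First, I would exploit the parallelism hypothesis to identify the relevant loops. Since $\sigma$ sends $s_\alpha, s_\beta$ to the same edge of $G$ with the same orientation, we have $\sigma(\omeno{s_\beta}) = \sigma(\omeno{s_\alpha})$; likewise $\sigma(\opiu{t_\alpha}) = \sigma(\opiu{t_\beta})$. Hence both subpaths $\sigma|_{[\omeno{s_\beta},\omeno{s_\alpha}]}$ and $\sigma|_{[\opiu{t_\alpha},\opiu{t_\beta}]}$ are combinatorial loops, based at $v_1 := \sigma(\omeno{s_\alpha})$ and $v_2 := \sigma(\opiu{t_\alpha})$ respectively. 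Since the cancellation move is degree-preserving, Lemma \ref{degreepreserving} tells us that both of these loops lie inside $\core{H}$, and that the second is the first walked in reverse. By the definition of $u$, the first loop spells $u$, so $u \in \pi_1(\core{H}, v_1)$. Reading the second loop forwards then spells $u^{-1}$, which places $u^{-1}$ (and therefore $u$) in $\pi_1(\core{H}, v_2)$. This is condition (i).

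For condition (ii), the first edge traversed on $[\omeno{s_\beta},\omeno{s_\alpha}]$ is $s_\beta$ itself, and parallelism gives that $\sigma(s_\beta)$ equals $\sigma(s_\alpha)$ with the same orientation. So the first letter of $u$ is precisely the label of $\sigma(s_\alpha)$, signed according to how $\sigma$ (equivalently $\sigma'$) traverses $s_\alpha$, as required. For condition (iii), reducedness of $u$ as a word is immediate because it is read along a subpath of the reduced path $\sigma$. Cyclic reducedness is the step I expect to be the main obstacle, but it follows from local injectivity of $\sigma$ at the vertex $\omeno{s_\alpha}$: if the last letter of $u$ were the inverse of its first letter, then the edge of $I_l$ immediately preceding $\omeno{s_\alpha}$ and the edge $s_\alpha$ itself would be sent by $\sigma$ to the same edge of $G$ with opposite orientations, contradicting the reducedness of $\sigma$ at $\omeno{s_\alpha}$.

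Having verified that $u$ is a genuine insertion word at $(s_\alpha,t_\alpha)$ for $\sigma'$, it remains to see that the resulting insertion reproduces $\sigma$. Here I would appeal to the uniqueness part of Proposition \ref{reducedpath}: the inserted pieces $\tau_1$ and $\tau_2$ are by definition the unique reduced paths in $G$ representing $u \in \pi_1(\core{H}, v_1)$ and $\ol u \in \pi_1(\core{H}, v_2)$ respectively. But the reduced loops $\sigma|_{[\omeno{s_\beta},\omeno{s_\alpha}]}$ and $\sigma|_{[\opiu{t_\alpha},\opiu{t_\beta}]}$ are loops in $\core{H}$ realizing exactly these homotopy classes (from the first paragraph above), so by uniqueness $\tau_1$ and $\tau_2$ coincide with them. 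A length check, $l'+2r = (l-2r)+2r = l$, confirms that splicing $\tau_1$ and $\tau_2$ back in at the cuts $\omeno{s_\alpha}$ and $\opiu{t_\alpha}$ reconstructs $\sigma$ verbatim.
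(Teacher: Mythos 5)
Your proof is correct and takes exactly the route the paper intends: the paper's own proof of this lemma is just ``Immediate from Lemma \ref{degreepreserving}'', and your write-up is a careful unpacking of that, using the containment of the two collapsed subpaths in $\core{H}$ and the reversal statement to verify conditions (i)--(iii) of Definition \ref{insertionpath} and then invoking uniqueness of reduced representatives to identify $\tau_1,\tau_2$ with the collapsed loops. The only step you leave implicit is in the cyclic-reducedness check, where ``same label, opposite traversal, common endpoint'' forces ``same edge of $G$'' only because both edges in question lie in the folded graph $\core{H}$ (rather than meeting at the possibly unfolded wedge vertex of $G$); this follows immediately from the containments you established in your first paragraph, so it is a one-line addition rather than a gap.
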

\begin{proof}
Immediate from Lemma \ref{degreepreserving}.
\end{proof}

\begin{mylemma}\label{parallelinverses2}
Let $\sigma':I_l\rar G$ be a cyclically reduced path with $\sigma'(0)=\sigma'(1)=*$ and such that $f\circ\sigma'$ is homotopically trivial; let $(s_1,t_1),...,(s_{l/2},t_{l/2})$ be a maximal reduction process for $f\circ\sigma'$. Let $(s_\alpha,t_\alpha)$ be a couple such that $\sigma'(s_\alpha)$ and $\sigma'(t_\alpha)$ belong to $\core{H}$ and let $u$ be an $\inserzione$ for $\sigma'$ at $(s_\alpha,t_\alpha)$; let $\sigma$ be the path obtained from $\sigma'$ by means of the insertion move of Lemma \ref{parallelinsertion}. Then $\sigma'$ can be obtained from $\sigma$ by means of a cancellation move which collapses the intervals that we just added; moreover this cancellation move is degree-preserving.
\end{mylemma}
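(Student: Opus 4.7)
The plan is to exhibit explicit parallel couples in a maximal reduction process for $f\circ\sigma$ whose associated cancellation move (via Lemma \ref{parallelcancellation}) collapses exactly the two intervals inserted in the construction of $\sigma$, and then to verify that these intervals contribute nothing to the degree.

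First I would take the maximal reduction process for $f\circ\sigma$ that is exhibited in the proof of Lemma \ref{parallelinsertion}, namely $(s_1,t_1),\ldots,(s_\alpha,t_\alpha),(e_1,e_1'),\ldots,(e_r,e_r'),(s_{\alpha+1},t_{\alpha+1}),\ldots,(s_{l/2},t_{l/2})$, where $e_r,\ldots,e_1$ are the edges of the inserted interval corresponding to $\tau_1$ (the reduced representative of $u$ based at $\sigma'(\omeno{s_\alpha})$) and $e_1',\ldots,e_r'$ the edges of the inserted interval corresponding to $\tau_2$ (the reduced representative of $\ol{u}$ based at $\sigma'(\opiu{t_\alpha})$). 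The candidate parallel pair is $(s_\alpha,t_\alpha)$ (inner) together with $(e_r,e_r')$ (outer), which have indices $\alpha$ and $\alpha+r$ respectively.

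Next I would verify the three conditions of Definition \ref{parallel} for this pair. The ordering $e_r, s_\alpha, t_\alpha, e_r'$ on the new interval is immediate from the way the insertion is defined. For the matching-edge conditions, condition (ii) of Definition \ref{insertionpath} forces the first letter of $u$ to be the label of $\sigma'(s_\alpha)$ with the same orientation in which $\sigma'$ crosses $s_\alpha$; hence $\sigma(e_r)$ and $\sigma(s_\alpha)$ share label, orientation, and starting vertex $\sigma'(\omeno{s_\alpha})\in\core{H}$, and since $\core{H}$ is folded they must be the same oriented edge of $G$. The analogous argument at the opposite end, combining the fact that $\ol{u}$ ends with the inverse of the first letter of $u$ with the fact that $(s_\alpha,t_\alpha)$ being a reduction couple forces $\sigma'(s_\alpha)$ and $\sigma'(t_\alpha)$ to share their label with opposite crossing orientations, identifies $\sigma(e_r')$ with $\sigma(t_\alpha)$ as oriented edges of $G$.

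Applying Lemma \ref{parallelcancellation} to this parallel pair then collapses precisely the intervals $[\omeno{e_r},\omeno{s_\alpha}]$ and $[\opiu{t_\alpha},\opiu{e_r'}]$, i.e.\ the two inserted intervals, so the resulting path is $\sigma'$. For degree-preservation, both $\tau_1$ and $\tau_2$ are contained in $\core{H}$, and since $\bcore{H}$ and $\bcore{\gen{g}}$ meet only at the basepoint, the inserted intervals cross no edge of $\core{\gen{g}}$; thus $\sigma$ and $\sigma'$ cross every edge of $\core{\gen{g}}$ the same number of times. A short case analysis, handling the corner situations $\omeno{s_\alpha}=0$ or $\opiu{t_\alpha}=l$ via the same label--orientation matching as above, shows that $\sigma$ inherits cyclic reducedness from $\sigma'$, so Lemma \ref{degree} applies to both paths and delivers equal degrees. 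The main obstacle is the edge-matching verification of the second step, which needs the foldedness of $\core{H}$ together with the prescribed boundary behaviour of $u$ in Definition \ref{insertionpath}; the rest is bookkeeping of the inserted intervals and a direct appeal to Lemma \ref{degree}.
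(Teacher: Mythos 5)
Your proof is correct and follows the route the paper intends: the paper's own proof is just ``Immediate from the definitions,'' and what you write out --- exhibiting the parallel pair $(s_\alpha,t_\alpha)$, $(e_r,e_r')$ inside the maximal reduction process constructed in Lemma \ref{parallelinsertion}, checking Definition \ref{parallel} via foldedness of $\core{H}$ and the boundary condition on $u$, and getting degree-preservation from Lemma \ref{degree} since the inserted intervals lie in $\core{H}$ --- is exactly the verification the paper leaves implicit. No gaps; the corner cases ($\omeno{s_\alpha}=0$, $\opiu{t_\alpha}=l$) you flag do work out as you indicate.
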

\begin{proof}
Immediate from the definitions.
\end{proof}

We are now going to prove the technical Lemmas \ref{insertioncommutes}, \ref{insertionsame} and \ref{insertioninsertion}; these will allow us to manipulate a sequence of insertion moves. The following lemma says that, if we take a path $\sigma$ and we have two parallel insertion moves that we want to perform on $\sigma$, then we can perform them in any order that we want, and we get the same result.

\begin{mylemma}\label{insertioncommutes}
Let $\sigma:I_l\rar G$ be a reduced path with $\sigma(0)=\sigma(1)=*$ and such that $f\circ\sigma$ is homotopically trivial; let $(s_1,t_1),...,(s_{l/2},t_{l/2})$ be a maximal reduction process for $f\circ\sigma$. Let $(s_\alpha,t_\alpha),(s_{\alpha'},t_{\alpha'})$ be distinct couples such that $\sigma(s_\alpha),\sigma(t_\alpha),\sigma(s_{\alpha'}),\sigma(t_{\alpha'})$ belong to $\core{H}$, and let $u,u'$ be $\inserzione$s for $\sigma$ at $(s_\alpha,t_\alpha),(s_{\alpha'},t_{\alpha'})$ respectively. Perform on $\sigma$ the insertion move relative to $u,(s_\alpha,t_\alpha)$ and then the insertion move relative to $u',(s_{\alpha'},t_{\alpha'})$ in order to obtain a path $\mu_1$. Perform on $\sigma$ the insertion move relative to $u',(s_{\alpha'},t_{\alpha'})$ and then the insertion move relative to $u,(s_\alpha,t_\alpha)$ in order to get a path $\mu_2$. Then $\mu_1$ and $\mu_2$ are the same path.
\end{mylemma}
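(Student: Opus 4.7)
The plan is to describe both $\mu_1$ and $\mu_2$ as explicit modifications of $\sigma$ and to check that the two modifications agree regardless of the order in which they are performed. The key observation is that an insertion move relative to a couple $(s,t)$ and a word $u$ is a local operation: it cuts $I_l$ at the two vertices $\omeno{s}$ and $\opiu{t}$, splices in the interval representing $u$ immediately before $s$ and the interval representing $\ol{u}$ immediately after $t$, and leaves every other edge of $I_l$ together with its relative position unchanged. In particular, the map on the uninvolved portion of the interval is literally the restriction of $\sigma$.

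By condition (i) of Definition \ref{reductionprocess}, the four edges $s_\alpha, t_\alpha, s_{\alpha'}, t_{\alpha'}$ are pairwise distinct. Using that an edge of $I_l$ is determined by either of its endpoints, and that condition (ii) gives $\omeno{s_\alpha}<\opiu{t_\alpha}$ and $\omeno{s_{\alpha'}}<\opiu{t_{\alpha'}}$, a short case check shows that the four cut vertices $\omeno{s_\alpha},\opiu{t_\alpha},\omeno{s_{\alpha'}},\opiu{t_{\alpha'}}$ are pairwise distinct except possibly in the two symmetric configurations $\opiu{t_\alpha}=\omeno{s_{\alpha'}}$ or $\opiu{t_{\alpha'}}=\omeno{s_\alpha}$, which occur exactly when the $t$-edge of one couple is immediately followed by the $s$-edge of the other.

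In the generic case where the four cut vertices are pairwise distinct, the two insertions modify $I_l$ at four genuinely disjoint locations, and so by locality they trivially commute: both $\mu_1$ and $\mu_2$ coincide with the path obtained from $\sigma$ by simultaneously cutting at each of the four vertices and splicing in the four corresponding intervals. In the degenerate case, say $\opiu{t_\alpha}=\omeno{s_{\alpha'}}$ so that $t_\alpha$ and $s_{\alpha'}$ are consecutive in $I_l$, a direct local verification shows that in either order the piece of the interval around this shared vertex evolves from $t_\alpha, s_{\alpha'}$ to $t_\alpha, \ol{u}, u', s_{\alpha'}$: the interval representing $\ol{u}$ is placed immediately after $t_\alpha$ and the one representing $u'$ immediately before $s_{\alpha'}$, independently of which insertion is performed first. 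The symmetric argument handles $\opiu{t_{\alpha'}}=\omeno{s_\alpha}$. Away from the shared vertices, both $\mu_1$ and $\mu_2$ agree with $\sigma$ on the original edges and agree with each other on the inserted intervals, so $\mu_1=\mu_2$. The only content is the bookkeeping in the degenerate case; the generic case is automatic from locality, which is where I expect no obstacles at all.
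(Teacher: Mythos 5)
Your argument is correct and is essentially the paper's own: the paper's proof likewise observes that both composites add the same edges to the same interval $I_l$ with the maps defined identically on them, so only the order of the additions changes and the resulting paths coincide. Your extra bookkeeping for the degenerate configuration $\opiu{t_\alpha}=\omeno{s_{\alpha'}}$ (or its mirror) is a finer-grained check of the same locality observation, and it is carried out correctly.
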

\begin{proof}
The two domains of $\mu_1,\mu_2$ are defined starting with the same interval $I_l$, and adding edges as explained in Lemma \ref{parallelinsertion}. The edges added are the same, and the maps $\mu_1,\mu_2$ are defined in the same way on those edges. The only thing that changes is the order in which the edges are added, but the resulting paths $\mu_1$ and $\mu_2$ are the same.
\end{proof}

The following lemma says that, if we take a path and we perform two parallel insertion moves at the same couple of edges, then we can consolidate then into one single insertion move instead (at the same couple of edges).

\begin{mylemma}\label{insertionsame}
Let $\sigma:I_l\rar G$ be a reduced path with $\sigma(0)=\sigma(1)=*$ and such that $f\circ\sigma$ is homotopically trivial; let $(s_1,t_1),...,(s_{l/2},t_{l/2})$ be a maximal reduction process for $f\circ\sigma$. Let $(s_\alpha,t_\alpha)$ be a couple in the reduction process such that $\sigma(s_\alpha),\sigma(t_\alpha)$ belong to $\core{H}$, and let $u,u'$ be $\inserzione$s for $\sigma$ at $(s_\alpha,t_\alpha)$. Perform on $\sigma$ the insertion move relative to $u,(s_\alpha,t_\alpha)$ and then the insertion move relative to $u',(s_\alpha,t_\alpha)$ in order to obtain a path $\mu_1$. Perform on $\sigma$ the insertion move relative to $uu',(s_\alpha,t_\alpha)$ in order to obtain a path $\mu_2$. Then $\mu_1$ and $\mu_2$ are the same path.
\end{mylemma}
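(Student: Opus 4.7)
The plan is to compute both paths $\mu_1$ and $\mu_2$ as explicit combinatorial maps on a common interval and verify they agree edge by edge. Setting $r=|u|$ and $r'=|u'|$, both constructions produce a map with domain $I_{l+2r+2r'}$, since each procedure adjoins a total of $r+r'$ edges at each of the two cuts at $\omeno{s_\alpha}$ and $\opiu{t_\alpha}$; moreover, by construction both paths restrict to $\sigma$ on the edges inherited from $I_l$. Hence the whole task reduces to checking that $\mu_1$ and $\mu_2$ coincide on the two inserted blocks.

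First I would check that $uu'$ is itself a valid insertion word for $\sigma$ at $(s_\alpha,t_\alpha)$, so that the single insertion defining $\mu_2$ makes sense. Let $a\in\{a_1^{\pm 1},\dots,a_n^{\pm 1}\}$ denote the letter with which condition (ii) requires both $u$ and $u'$ to begin. Condition (i) for $uu'$ is immediate, since $u$ and $u'$ both lie in $\pi_1(\core{H},\sigma(\omeno{s_\alpha}))\cap\pi_1(\core{H},\sigma(\opiu{t_\alpha}))$ and so does their product. For condition (iii), cyclic reducedness of $u'$ gives that its last letter is different from $a^{-1}$, so $uu'$ is cyclically reduced, and it evidently begins with $a$. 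Thus $uu'$ is a legitimate insertion word of length $r+r'$.

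Next I would compare the inserted blocks. At the left cut, by unpacking the two successive insertions, the block in $\mu_1$ is the concatenation $\tau_1\cdot\tau_1'$, where $\tau_1,\tau_1'$ are the reduced loops at $\sigma(\omeno{s_\alpha})$ representing $u$ and $u'$ respectively; the order is forced because the second insertion is pasted at the new position of $\omeno{s_\alpha}$ in $\sigma'$, which lies immediately to the right of $\tau_1$. This concatenation represents $uu'$, and it is reduced: the only new junction is at $\sigma(\omeno{s_\alpha})$, where the last letter of $u$ (not $a^{-1}$, by cyclic reducedness of $u$) and the first letter of $u'$ ($=a$) are not inverses, so, since $\core{H}$ is folded, the two corresponding edges of $G$ are distinct. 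By the uniqueness statement of Proposition \ref{reducedpath}, $\tau_1\cdot\tau_1'$ is the unique reduced loop at $\sigma(\omeno{s_\alpha})$ representing $uu'$, which is precisely what $\mu_2$ inserts there. A symmetric argument shows that the right block of $\mu_1$, namely $\tau_2'\cdot\tau_2$, represents $\ol{u'}\cdot\ol{u}=\overline{uu'}$ and is reduced, hence equals the right block inserted by $\mu_2$. The main subtlety, and the step where bookkeeping has to be done carefully, is this reversed order at the right cut — $\tau_2$ was inserted first and then the second insertion was pasted in between $t_\alpha$ and $\tau_2$ — but the product $\ol{u'}\,\ol{u}$ is exactly $\overline{uu'}$, matching the single insertion performed by $\mu_2$.
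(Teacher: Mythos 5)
Your proof is correct and follows essentially the same route as the paper, which simply unpacks the definition of the insertion move and observes that the two procedures adjoin the same edges with the same maps (the paper disposes of this lemma by declaring it ``completely analogous'' to Lemma \ref{insertioncommutes}). You additionally supply the details the paper leaves implicit — that $uu'$ is a legitimate $\inserzione$, that the concatenated blocks $\tau_1\cdot\tau_1'$ and $\tau_2'\cdot\tau_2$ are reduced because $\core{H}$ is folded and $u,u'$ are cyclically reduced words beginning with the same letter, and the reversed order $\ol{u'}\,\ol{u}=\overline{uu'}$ at the cut $\opiu{t_\alpha}$ — all of which is accurate.
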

\begin{proof}
Completely analogous to the proof of Lemma \ref{insertioncommutes}.
\end{proof}

The following Lemma \ref{insertioninsertion} says that, if we take a path and we perform a parallel insertion move at a couple of edges, and then another insertion move at a couple of edges that we just added, then we can again consolidate the two insertion moves into a single one. Notice that this is slightly different from the previous Lemma \ref{insertionsame}.

\begin{mylemma}\label{insertioninsertion}
Let $\sigma:I_l\rar G$ be a reduced path with $\sigma(0)=\sigma(1)=*$ and such that $f\circ\sigma$ is homotopically trivial; let $(s_1,t_1),...,(s_{l/2},t_{l/2})$ be a maximal reduction process for $f\circ\sigma$. Let $(s_\alpha,t_\alpha)$ be a couple such that $\sigma(s_\alpha)$ and $\sigma(t_\alpha)$ belong to $\core{H}$, and let $u$ be an $\inserzione$ for $\sigma$ at $(s_\alpha,t_\alpha)$; let $\sigma'$ be the reduced path obtained with the insertion move relative to $u,(s_\alpha,t_\alpha)$, and take a maximal reduction process for $f\circ\sigma'$ containing all the couples $(s_1,t_1),...,(s_{l/2},t_{l/2})$. Let $(s',t')$ be a couple in the reduction process for $f\circ\sigma'$ that does not belong to the reduction process for $f\circ\sigma$, and let $u'$ be an $\inserzione$ for $\sigma'$ at $(s',t')$; let $\sigma''$ be the path obtained from $\sigma'$ after performing the insertion move relative to $u',(s',t')$. Then there is an $\inserzione$ $\ol u$ for $\sigma$ at $(s_\alpha,t_\alpha)$ such that, if we perform on $\sigma$ the insertion move relative to $\ol{u},(s_\alpha,t_\alpha)$, we obtain $\sigma''$.
\end{mylemma}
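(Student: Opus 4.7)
The plan is to identify $(s',t')$ explicitly inside the material inserted by the first move, and then to build $\ol u$ by splicing $u'$ into $u$ at the matching position. By the proof of Lemma \ref{parallelinsertion}, the couples added to the reduction process of $f\circ\sigma'$ are exactly the pairs $(e_1,e_1'),\dots,(e_r,e_r')$ pairing the edges of the $\tau_1$-interval with those of the $\tau_2$-interval in a nested fashion. Since $(s',t')$ is not among the original couples of the reduction process of $f\circ\sigma$, it must be one of these new pairs, say $(s',t')=(e_i,e_i')$ for some $1\le i\le r$. In terms of the word $u$ read along $\tau_1$, the vertex $\sigma'(\omeno{s'})$ is the one reached after the first $r-i$ letters of $u$; symmetrically, $\sigma'(\opiu{t'})$ is the vertex reached after the first $i$ letters of the inverse of $u$ along $\tau_2$.

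Split $u=u_1u_2$ with $u_1$ the prefix of length $r-i$ and $u_2$ the suffix of length $i$, and set the candidate
\[
\ol u \;:=\; u_1\,u'\,u_2.
\]
I claim that $\ol u$ is an $\inserzione$ for $\sigma$ at $(s_\alpha,t_\alpha)$ and that the insertion move on $\sigma$ relative to $\ol u,(s_\alpha,t_\alpha)$ produces exactly $\sigma''$.

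To verify Definition \ref{insertionpath} for $\ol u$: condition (i) at $\sigma(\omeno{s_\alpha})$ is immediate from the decomposition, since $u_1$ is a path in $\core{H}$ from $\sigma(\omeno{s_\alpha})$ to the splicing vertex $\sigma'(\omeno{s'})$, $u'$ is a loop there in $\core{H}$, and $u_2$ returns to $\sigma(\omeno{s_\alpha})$; a symmetric argument on the $\tau_2$-side gives membership in $\pi_1(\core{H},\sigma(\opiu{t_\alpha}))$. Condition (ii) holds when $i<r$ because the first letter of $\ol u$ is the first letter of $u$; in the boundary case $i=r$ the first letter of $\ol u$ is the first letter of $u'$, and condition (ii) for $u'$ at $(e_r,e_r')$ gives exactly what is required. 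For condition (iii), reducedness at the internal junctions $u_1\mid u'$ and $u'\mid u_2$ follows respectively from reducedness of $\tau_1$ at the splicing vertex and from cyclic reducedness of $u'$ (the first letter of $u_2$ coincides with the first letter of $u'$); cyclic reducedness of $\ol u$ then reduces to the fact that its last letter, which equals the last letter of $u$, does not cancel with its first letter, which corresponds to the label of $s_\alpha$, and this holds by cyclic reducedness of $u$.

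Finally, the subpath of $\sigma''$ from the original vertex $\omeno{s_\alpha}$ to the edge $s_\alpha$ is a reduced loop in $\core{H}$ based at $\sigma(\omeno{s_\alpha})$ that reads the word $\ol u=u_1u'u_2$, hence by Proposition \ref{reducedpath} it is the unique reduced representative of $\ol u$; a symmetric statement identifies the subpath from $t_\alpha$ to $\opiu{t_\alpha}$ in $\sigma''$ as the unique reduced representative of the inverse of $\ol u$. By the construction preceding Lemma \ref{parallelinsertion}, these are exactly the two pieces glued into $\sigma$ by the insertion move relative to $\ol u,(s_\alpha,t_\alpha)$, so that move produces $\sigma''$. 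The main technical obstacle is the careful bookkeeping of orientations and base vertices, especially in the boundary cases $i=1$ and $i=r$, to ensure that all junctions are reduced and that $\ol u$ really lies in the intersection of the two relevant conjugates of $H$.
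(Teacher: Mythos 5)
Your argument is correct, and it is in fact more explicit than what the paper does: the paper's official proof of this lemma is the single line ``completely analogous to the proof of Lemma \ref{insertioncommutes}'', i.e.\ it only gestures at the fact that the same edges get inserted with the same maps and never constructs the consolidated $\inserzione$. Your splicing $\ol u=u_1u'u_2$, with $u_1$ the prefix of $u$ of length $r-i$ and $u_2$ the suffix of length $i$, together with the verification of conditions (i)--(iii) of Definition \ref{insertionpath} (using cyclic reducedness of $u'$ at the junction $u'\mid u_2$, reducedness of $u$ at $u_1\mid u'$, and condition (ii) for $u'$ in the boundary case $i=r$) and the identification of the two halves of $\sigma''$ as the unique reduced paths reading $\ol u$ and its inverse, is exactly the detailed version of the intended argument, so it supplies content the paper omits.

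One small point you should make explicit: you identify $(s',t')$ with one of the pairs $(e_i,e_i')$ ``by the proof of Lemma \ref{parallelinsertion}'', but the statement of the lemma allows \emph{any} maximal reduction process for $f\circ\sigma'$ containing the old couples, not just the one constructed there. This is harmless, but it needs a sentence: since the old couples exhaust the old edges, the new couples must pair the $2r$ inserted edges among themselves; no couple can have both edges inside the $\tau_1$-interval (or the $\tau_2$-interval), because an innermost such cancellation would contradict the fact that $u$ is a reduced word; and then Lemma \ref{lemmino1}, together with the couple $(s_\alpha,t_\alpha)$ sitting between the two inserted intervals, forces the non-crossing matching to be exactly $e_j\leftrightarrow e_j'$ for $j=1,\dots,r$. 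With that observation added, your identification of the splicing vertices $\sigma'(\omeno{s'})$ and $\sigma'(\opiu{t'})$, and hence the whole proof, goes through for an arbitrary admissible reduction process.
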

\begin{proof}
Completely analogous to the proof of Lemma \ref{insertioncommutes}.
\end{proof}

\subsection{Characterization of all the minimum-degree equations}

Recall that $L$ is the number of edges of $G$. Let $d_{min}$ be the minimum possible degree for a non-trivial equation $w\in\fI_g$.

\begin{mythm}\label{main2}
Let $w\in\fI_g$ be a cyclically reduced equation of degree $d_{min}$ and let $\sigma:I_l\rar G$ be the corresponding reduced path. Then there is a cyclically reduced equation $w'\in\fI_g$ of degree $d_{min}$ with corresponding path $\sigma':I_{l'}\rar G$, and a maximal reduction process for $\sigma'$, such that:

(i) The path $\sigma'$ has length $l'\le 16L^2d_{min}$.

(ii) The path $\sigma$ can be obtained from $\sigma'$ by means of at most $l'/2$ insertion moves (as in Lemma \ref{parallelinsertion}), each of them performed on a distinct couple of edges of $I_{l'}$.
\end{mythm}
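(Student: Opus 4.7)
The strategy is to reverse the length-reduction process from the proof of Theorem \ref{main}. Starting from $\sigma$, we apply parallel cancellation moves iteratively until the length drops below $16L^2 d_{min}$; because $d_{min}$ is the minimum possible degree, every one of these cancellations is automatically degree-preserving, so each has an inverse insertion move. Reversing the sequence will produce a chain of insertion moves from the short path $\sigma'$ back to $\sigma$, and we will then consolidate these insertions into insertions at distinct couples of $I_{l'}$.

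More precisely, as long as the current path has length greater than $16L^2 d_{min}$, Proposition \ref{findingparallels} provides two parallel couples in any maximal reduction process, and Lemma \ref{parallelcancellation} produces a strictly shorter reduced path whose associated equation lies in $\fI_g$. By Lemma \ref{degreedecreases}, the degree of the new equation is at most $d_{min}$, and by minimality it must equal $d_{min}$; in particular the equation is non-trivial and the path is cyclically reduced. Iterating, we arrive at a path $\sigma' \colon I_{l'} \rar G$ with $l' \le 16L^2 d_{min}$, whose equation $w'$ has degree $d_{min}$. Each cancellation step in this process is degree-preserving, so by Lemma \ref{parallelinverses} it is inverted by a parallel insertion move. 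Reading the sequence backwards produces a finite sequence of insertion moves $\sigma' = \sigma^{(0)} \leadsto \sigma^{(1)} \leadsto \cdots \leadsto \sigma^{(N)} = \sigma$, with a compatible maximal reduction process carried along at each step by Lemma \ref{parallelinsertion}.

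It remains to rewrite this sequence so that every insertion is performed at a distinct couple of edges belonging to the original reduction process of $\sigma'$. For this we use Lemmas \ref{insertioncommutes}, \ref{insertionsame}, and \ref{insertioninsertion}. The key observation is the following: at each stage, an insertion is either performed at a couple inherited from $\sigma'$, or at a couple that was created by some earlier insertion in the sequence. In the second case, Lemma \ref{insertioninsertion} allows us to replace the pair (earlier insertion, later insertion) by a single insertion at the couple of the earlier step, using an appropriately chosen insertion word. Iterating this operation a finite number of times (the total ``nesting depth'' strictly decreases) we end up with a sequence all of whose insertions are performed at couples of the reduction process of $\sigma'$. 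Using Lemma \ref{insertioncommutes} we reorder freely, and using Lemma \ref{insertionsame} we merge any two insertions performed at the same couple of $I_{l'}$ into one. The final sequence therefore contains at most one insertion per couple of the reduction process of $\sigma'$, hence at most $l'/2$ insertions in total, each at a distinct couple, as required.

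The main technical point (and the only non-routine one) is the consolidation argument in the last paragraph: one has to verify that the rewriting operations of Lemmas \ref{insertioncommutes}, \ref{insertionsame}, \ref{insertioninsertion} can be applied in an order that terminates with all insertions based at couples of $I_{l'}$. This is guaranteed by doing the replacements from Lemma \ref{insertioninsertion} outward (resolving insertions at newly-created couples against the insertion that created their couple, in the reverse order in which they appear in the sequence) so that the number of insertions not based at a couple of $I_{l'}$ strictly decreases at each step.
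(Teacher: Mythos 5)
Your proposal is correct and follows essentially the same route as the paper: iterate parallel cancellations (which are automatically degree-preserving by minimality of $d_{min}$, via Lemma \ref{degreedecreases}) until the length bound of Proposition \ref{findingparallels} forces $l'\le 16L^2d_{min}$, invert them as insertion moves via Lemma \ref{parallelinverses}, and then consolidate the insertions using Lemmas \ref{insertioncommutes}, \ref{insertionsame} and \ref{insertioninsertion}. The only cosmetic difference is in the consolidation step, where the paper takes an insertion sequence of minimal length and derives a contradiction, while you rewrite the sequence explicitly with a strictly decreasing measure; these are the same argument in different clothing.
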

\begin{proof}
If the length of $\sigma$ is $l>16L^2d_{min}$, then by Proposition \ref{findingparallels} we can perform a cancellation move on $\sigma$ in order to get a shorter path. The degree can't strictly increase, by Lemma \ref{degreedecreases}, and can't strictly decrease, since $d_{min}$ was minimum. Thus we obtain a strictly shorter path, whose corresponding equation has the same degree $d_{min}$. We reiterate the process, and after a finite number of parallel cancellation moves we have to obtain a path $\sigma':I_{l'}\rar G$ with corresponding equation of degree $d_{min}$ and of length $l'\le16L^2d_{min}$.

Since $\sigma'$ is obtained from $\sigma$ by means of a finite number of parallel cancellation moves, by Lemma \ref{parallelinverses} this means that $\sigma$ can be obtained from $\sigma'$ by means of a sequence of insertion moves of Lemma \ref{parallelinsertion}. Take a sequence of insertion moves $\iota_1,...,\iota_p$ that changes $\sigma'$ into $\sigma$, and has minimum length $p$ between all such sequences.

Suppose there are two insertions $\iota_q,\iota_r$ with $q<r$ such that $\iota_r$ acts on a couple of edges that is added by $\iota_q$: then we take an innermost couple of insertions with that property, so that each transformation $\iota_j$ with $q<j<r$ acts on a couple of edges different from $\iota_r$. In particular, by Lemma \ref{insertioncommutes}, we can change the order in our sequence in order to bring $\iota_r$ adjacent to $\iota_q$, and we can then apply Lemma \ref{insertioninsertion} in order to substitute $\iota_q,\iota_r$ with a single insertion move. This contradicts the minimality of the length $p$ of the sequence.

Thus in our sequence $\iota_1,...,\iota_q$ we have that each insertion move acts on a couple of edges of the original interval of definition $I_{l'}$ of $\sigma'$. If two insertion moves $\iota_q,\iota_r$ with $q<r$ act on the same couple of edges of $I_{l'}$, then we reason as above, and by means of Lemmas \ref{insertioncommutes} and \ref{insertionsame} we can substitute them with a single insertion move, contradicting the minimality of $p$.

It follows that each couple of insertion moves of the sequence $\iota_1,...,\iota_p$ acts on a different couple of edges of the original interval of definition $I_{l'}$ of $\sigma'$, and in particular $p\le l'/2$. The conclusion follows.
\end{proof}

\subsection{Equations of an arbitrary fixed degree}

Until now we focused on the study of the equations of minimum possible degree, but the results can be generalized to equations of any fixed degree. Let $d\ge1$ be an integer. Let $L$ be the number of edges of the graph $G$. The following proposition is similar to Proposition \ref{findingparallels}, but with the difference that this time we are looking for a parallel cancellation move which is degree-preserving.

\begin{myprop}\label{findingparallels2}
Let $w\in\fI_g$ be a cyclically reduced equation of degree $d$ and let $\sigma:I_l\rar G$ be the corresponding path; let $(s_1,t_1),...,(s_{l/2},t_{l/2})$ be any maximal reduction process for $f\circ\sigma$. Suppose $l>32L^4d^2+16L^3d$. Then the reduction process contains two $\parallele$ couples $(s_\alpha,t_\alpha),(s_\beta,t_\beta)$ such that the corresponding parallel cancellation move is degree-preserving.
\end{myprop}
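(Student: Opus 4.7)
The plan is to extend the pigeonhole argument of Proposition~\ref{findingparallels} by restricting attention to couples that sit entirely inside $\core{H}$, and then using Lemma~\ref{degreepreserving} to locate two such parallel couples whose cancellation preserves the degree.

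First I would bound the total length of $\sigma$ mapped outside $\core{H}$. Writing $w$ as a cyclically reduced word $c_1 x^{\alpha_1}\cdots c_r x^{\alpha_r} c_{r+1}$ with $\sum_i|\alpha_i|=d$ and $r\le d$, the $x$-pieces contribute at most $2dL$ in $\bcore{\gen{g}}$, and the $\pi_H,\pi_H^{-1}$ portions of the at most $d+1$ $H$-blocks contribute at most $2(d+1)L$ in $\bcore{H}\setminus\core{H}$; overall at most $O(dL)$ edges of $I_l$ have $\sigma$-image outside $\core{H}$. Consequently at most $O(dL)$ couples of the reduction process use an outside edge, so at least $l/2-O(dL)$ couples are \emph{good}, meaning $\sigma(s_i),\sigma(t_i)\in\core{H}$.

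Next I would apply two successive pigeonhole arguments. Colouring each good couple by the quadruple $(\sigma(s_i),\epsilon_i,\sigma(t_i),\delta_i)$ as in the proof of Proposition~\ref{findingparallels} gives at most $4L^2$ colours; combined with the hypothesis $l>32L^4d^2+16L^3d$, this yields a single colour shared by more than $4L^2d^2+2Ld$ good couples. By Lemma~\ref{innermostcouple} a good couple cannot itself be innermost (innermost couples have one edge outside $\core{H}$), hence each contains an innermost cancellation in its interior; since by Lemma~\ref{innermostbounded} there are at most $2d$ innermost cancellations in total, a second pigeonhole produces one innermost cancellation $(q,r)$ contained in more than $2L^2d+L$ good same-colour couples. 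All these couples contain the interval $[q,r]$, so they are pairwise nested and assemble into a chain $P_1\supset P_2\supset\cdots\supset P_K$ of nested same-colour couples; by Definition~\ref{parallel} each consecutive pair $P_i\supset P_{i+1}$ is parallel.

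Finally I would invoke Lemma~\ref{degreepreserving}: the cancellation associated to $P_i\supset P_{i+1}$ is degree-preserving exactly when both gap intervals $[\omeno{s_i},\omeno{s_{i+1}}]$ and $[\opiu{t_{i+1}},\opiu{t_i}]$ are mapped into $\core{H}$. As $i$ varies the left gaps are pairwise disjoint and so are the right gaps, so each outside edge of $I_l$ spoils at most one consecutive pair in the chain. Since the chain has $K-1\ge 2L^2d+L$ consecutive pairs while only $O(dL)$ outside edges exist, some consecutive pair has both gaps in $\core{H}$, giving the desired degree-preserving parallel cancellation. The main obstacle is purely combinatorial: the three pigeonhole counts (factor $4L^2$ for colours, factor $2d$ for innermost cancellations, additive $O(dL)$ outside-edge correction) must compose so that the resulting inequality is absorbed by the stated bound $32L^4d^2+16L^3d$; once the accounting is lined up, no new geometric content beyond Proposition~\ref{findingparallels} and Lemma~\ref{degreepreserving} is needed.
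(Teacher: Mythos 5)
Your proposal is correct in substance, but it reaches the conclusion by a genuinely different route from the paper. The paper's proof never invokes the bound of Lemma \ref{innermostbounded}: instead it deletes from $I_l$ every edge lying in a couple that touches $\bcore{\gen{g}}$ (at most $2Ld$ edges), passes to the connected components of what remains, finds one long component $\ol{C}$, and then pigeonholes the edges $s_i\in\ol{C}$ on \emph{quintuples} $(\sigma(s_i),\epsilon,\sigma(t_i),\delta,C_k)$, where $C_k$ is the component containing the partner $t_i$; recording $C_k$ is what forces the right-hand gap $[\opiu{t_{i_2}},\opiu{t_{i_1}}]$ to stay inside a single component, so both collapsed intervals avoid $\bcore{\gen{g}}$ and Lemma \ref{degree} gives degree-preservation. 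You instead run the colour-plus-innermost pigeonhole of Proposition \ref{findingparallels} (restricted to ``good'' couples), obtain a long nested chain of same-colour couples through one innermost cancellation, and then dispose of the degree condition by a disjointness count: consecutive gaps in the chain are pairwise disjoint, so the $O(dL)$ edges mapping outside $\core{H}$ can spoil only $O(dL)$ consecutive pairs. This is a valid alternative; done carefully it even yields a bound of order $L^3d^2$, better than the stated $32L^4d^2+16L^3d$. Two small points to fix when writing it up. First, Lemma \ref{degreepreserving} only gives the implication ``degree-preserving $\Rightarrow$ gaps in $\core{H}$''; the direction you actually need (clean gaps $\Rightarrow$ degree-preserving) is not that lemma but follows from Lemma \ref{degree}, exactly as in the last lines of the paper's proof, since the collapsed edges then contribute no crossings of $\core{\gen{g}}$. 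Second, with the rough constants as you state them ($4L^2$ colours, outside-edge count about $(4d+2)L$) the final inequality is not absorbed by $32L^4d^2+16L^3d$ when $L$ is very small (e.g.\ $L=2$); this is repaired by noting that good couples have images in $\core{H}$, which has strictly fewer than $L$ edges, and counting the outside edges against the remaining edges of $G$, after which the accounting closes with room to spare.
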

\begin{proof}
Take the domain $I_l$ of $\sigma$ and remove all the edges $r$ with the following property: $r$ belongs to a couple $(s_i,t_i)$ such that at least one of $\sigma(s_i),\sigma(t_i)$ is an edge of $\bcore{\gen{g}}$. By Lemma \ref{degree}, for every edge $e$ of $\bcore{\gen{g}}$ there are exactly $d$ edges of $I_l$ that are sent to $e$; and $\bcore{\gen{g}}$ contains at most $L$ edges. This means that we removed from $I_l$ at most $2Ld$ edges, and thus there remain at most $2Ld+1$ connected components, which we call $C_1,...,C_a$ for $a\le2Ld+1$. Since $l\ge16L^3d(2Ld+1)+1$, there is at least one connected component $\ol{C}\in\{C_1,...,C_a\}$ of length at least $16L^3d+1$.

We observe that there is no couple $(s_i,t_i)$ with both $s_i,t_i\in\ol{C}$: otherwise, the interval $[\omeno{s_i},\opiu{t_i}]\subseteq\ol{C}$ would contain an innermost cancellation, and thus by Lemma \ref{innermostcouple} we would find an edge of $\ol{C}$ which is sent to $\bcore{\gen{g}}$, contradiction.

Suppose the connected component $\ol{C}$ contains at least $8L^3d+1$ edges $s_i$ belonging to couples $(s_i,t_i)$ of the cancellation process (otherwise $\ol{C}$ has to contain at least $8L^3d+1$ edges $t_i$ belonging to couples $(s_i,t_i)$ of the reduction process, and the reasoning is analogous). To each such edge $s_i$, we associate the quintuple $(\sigma(s_i),\epsilon,\sigma(t_i),\delta,C_k)$ where $\sigma(s_i),\sigma(t_i)$ are edges of $G$ and $\epsilon,\delta\in\{+1,-1\}$ tell us the orientation with which $\sigma(s_i)$ and $\sigma(t_i)$ cross their images, and $C_k\in\{C_1,...,C_a\}\setminus\{\ol{C}\}$ is the connected component which $t_i$ belongs to. Since we have at least $8L^3d+1$ edges $s_i$ in $\ol{C}$ and at most $L\cdot 2\cdot L\cdot 2\cdot (2Ld)$ possible quintuples, there are at least two edges $s_{i_1},s_{i_2}$ with the same associated quintuple $(\sigma(s_{i_1}),\epsilon,\sigma(t_{i_1}),\delta,C_k)=(\sigma(s_{i_2}),\epsilon,\sigma(t_{i_2}),\delta,C_k)$.

It immediately follows that $(s_{i_1},t_{i_1})$ and $(s_{i_2},t_{i_2})$ are parallel couples. Without loss of generality we can assume that $\omeno{s_{i_1}}<\omeno{s_{i_2}}$; we have that the interval $[\omeno{s_{i_1}},\omeno{s_{i_2}}]$ is contained in $\ol{C}$ and the interval $[\opiu{t_{i_2}},\opiu{t_{i_1}}]$ is contained in $C_k$. Thus, when we perform the cancellation move relative to the parallel couples $(s_{i_1},t_{i_1})$ and $(s_{i_2},t_{i_2})$, we only remove edges whose image is in $\bcore{H}$. We conclude from Lemma \ref{degree} that the cancellation move is degree-preserving, as desired.
\end{proof}

We are now ready to state and prove the analogues to Theorem \ref{main} and to Theorem \ref{main2}.

\begin{mythm}\label{main3}
Suppose $\fI_g$ contains a non-trivial equation of degree $d$. Then $\fI_g$ contains a non-trivial equation $w$ of degree $d$ such that the corresponding path $\sigma:I_l\rar G$ has length $l\le32L^4d^2+16L^3d$.
\end{mythm}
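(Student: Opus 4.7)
The plan is to mimic the proof of Theorem \ref{main}, with Proposition \ref{findingparallels2} playing the role of Proposition \ref{findingparallels}. The essential new point, compared to the minimum-degree case, is that an arbitrary parallel cancellation might strictly drop the degree below $d$ (by Lemma \ref{degreedecreases}); this is precisely why we need the stronger Proposition \ref{findingparallels2}, which guarantees a \emph{degree-preserving} parallel cancellation as soon as the path is long enough.

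The argument will go as follows. Among all non-trivial equations $w\in\fI_g$ of degree exactly $d$, pick one whose corresponding reduced path $\sigma:I_l\rar G$ has minimum possible length $l$. Minimality of $l$ forces $\sigma$ to be cyclically reduced: otherwise one could cyclically reduce the word $w$ without changing its degree (the degree is defined through the cyclic reduction), and the resulting shorter path would still correspond to a non-trivial equation of degree $d$, contradicting the minimality of $l$. Argue by contradiction and assume $l>32L^4d^2+16L^3d$.

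Fix any maximal reduction process $(s_1,t_1),\dots,(s_{l/2},t_{l/2})$ for $f\circ\sigma$; this exists because $w\in\fI_g=\ker\varphi_g$, so $f\circ\sigma$ is homotopically trivial. By Proposition \ref{findingparallels2}, the process contains two parallel couples $(s_\alpha,t_\alpha),(s_\beta,t_\beta)$ whose associated parallel cancellation move is degree-preserving. Apply Lemma \ref{parallelcancellation} to this pair: the output is a reduced path $\sigma':I_{l'}\rar G$ with $\sigma'(0)=\sigma'(1)=*$ and $f\circ\sigma'$ nullhomotopic, so the corresponding equation $w'\in\fI_g$ is again in the ideal. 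By construction $l'<l$ (at least the two edges $s_\beta,t_\beta$ are collapsed), and by Definition \ref{defdegpres} together with the degree-preserving property from Proposition \ref{findingparallels2}, $w'$ has degree exactly $d$. Finally, $w'$ is non-trivial: since the collapsed intervals $[\omeno{s_\beta},\omeno{s_\alpha}]$ and $[\opiu{t_\alpha},\opiu{t_\beta}]$ lie inside $\bcore{H}$ (this is part of what degree-preservation gives us, via Lemma \ref{degreepreserving}), the path $\sigma'$ still crosses every edge of $\core{\gen{g}}$ exactly $d\ge1$ times by Lemma \ref{degree}, so $\sigma'$ is not constant. Thus $\sigma'$ corresponds to a non-trivial equation in $\fI_g$ of degree $d$ and length $l'<l$, contradicting minimality.

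I do not anticipate any serious obstacle: the whole proof is a straightforward repetition of the scheme of Theorem \ref{main} with the degree-preserving refinement. The only mildly subtle point is confirming that the resulting $w'$ is still non-trivial and of degree exactly $d$ (as opposed to $<d$), but both facts are delivered for free by Proposition \ref{findingparallels2} together with Lemmas \ref{degree}, \ref{degreedecreases} and \ref{degreepreserving}.
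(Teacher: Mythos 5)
Your proposal is correct and follows essentially the same route as the paper: take a non-trivial degree-$d$ equation whose corresponding path has minimal length (hence is cyclically reduced), and if the length exceeded $32L^4d^2+16L^3d$, Proposition \ref{findingparallels2} would supply a degree-preserving parallel cancellation (Lemma \ref{parallelcancellation}) producing a strictly shorter path for a non-trivial degree-$d$ equation in $\fI_g$, contradicting minimality. Your extra verifications (non-triviality of $w'$, which already follows from $l'\ge2$, and the cyclic reducedness of the minimal representative) are fine and only add detail the paper leaves implicit.
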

\begin{proof}
Take any non-trivial equation in $\fI_g$ of degree $d$ and such that the corresponding path $\sigma:I_l\rar G$ has minimum length $l$; in particular this implies that the equation is cyclically reduced. If $l>32L^4d^2+16L^3d$ then by Proposition \ref{findingparallels2} we can perform a degree-preserving cancellation move on $\sigma$, and thus we can find a non-trivial equation in $\fI_g$ of degree $d$ whose corresponding path is strictly shorter, contradiction. Thus we must have $l\le32L^4d^2+16L^3d$, and the conclusion follows.
\end{proof}

\begin{mycor}\label{algorithm3}
There is an algorithm that, given $H\sgr F_n$ finitely generated and $g\in F_n$ and an integer $d\ge1$, tells us whether $\fI_g$ contains non-trivial equations of degree $d$, and, if so, produces an equation $w\in\fI_g$ of degree $d$.
\end{mycor}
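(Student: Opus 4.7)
The plan is to mimic Corollary \ref{algorithm}: Theorem \ref{main3} reduces the question to a finite search, since any equation of degree $d$ in $\fI_g$, if one exists, is witnessed by one whose corresponding reduced path in $G$ has length at most $N:=32L^4d^2+16L^3d$, where $L$ is the number of edges of $G=\bcore{H}\vee\bcore{\gen{g}}$.

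Concretely, I first build $G$ from the input data --- $\bcore{H}$ via Stallings folding applied to a generating set for $H$, and $\bcore{\gen{g}}$ from the cyclic reduction of $g$ --- and read off $L$. I then enumerate, length by length, all reduced combinatorial paths $\sigma:I_l\rar G$ with $\sigma(0)=\sigma(1)=*$ and $l\le N$; there are only finitely many such paths and they can be listed by a straightforward tree-like search in $G$. For each candidate $\sigma$, I compute the word $f\circ\sigma$ in the letters $a_1^{\pm1},\ldots,a_n^{\pm1}$, freely reduce it, and test whether it is trivial in $F_n$: this is exactly the test $w\in\fI_g$ for the corresponding equation $w=\theta^{-1}([\sigma])$. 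For each $\sigma$ that passes the test, I recover $w$ by reading the label word along $\sigma$, writing $x^{\pm1}$ whenever $\sigma$ traverses an edge of $\bcore{\gen{g}}$, then cyclically reduce $w$ inside $H*\gen{x}$ and count the occurrences of $x, x^{-1}$ in order to obtain its degree.

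If some candidate yields an equation of degree exactly $d$, I output it; otherwise I declare that $\fI_g$ contains no equation of degree $d$. Correctness of the affirmative answer is immediate from Lemma \ref{degree} together with the bijection between reduced paths and non-trivial equations given by Definitions \ref{defcorrpath} and \ref{defcorrequation}. Correctness of the negative answer is precisely the content of Theorem \ref{main3}: any equation of degree $d$ in $\fI_g$ would be realised by some path of length at most $N$, which the enumeration would find. There is no real conceptual obstacle here --- all the work has been absorbed into the length bound of Theorem \ref{main3}, after which what remains is routine finite enumeration together with decidable word-problem checks in $F_n$ and $H*\gen{x}$.
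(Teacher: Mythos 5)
Your proposal is correct and is essentially the argument the paper intends: the paper states Corollary \ref{algorithm3} as an immediate consequence of the length bound in Theorem \ref{main3}, with the same finite enumeration of reduced loops at the basepoint of $G$, the same free-reduction test for membership in $\fI_g$, and the same degree computation as in the algorithm of Corollary \ref{algorithm}. No gaps; the appeal to Theorem \ref{main3} for the correctness of a negative answer is exactly the right point.
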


\begin{mythm}\label{main4}
Let $w\in\fI_g$ be a cyclically reduced equation of degree $d$ and let $\sigma:I_l\rar G$ be the corresponding reduced path. Then there is a cyclically reduced equation $w'\in\fI_g$ of degree $d$ with corresponding path $\sigma':I_{l'}\rar G$, and a maximal reduction process for $\sigma'$, such that:

(i) The path $\sigma'$ has length $l'\le32L^4d^2+16L^3d$.

(ii) The path $\sigma$ can be obtained from $\sigma'$ by means of at most $l'/2$ insertion moves (as in Lemma \ref{parallelinsertion}), each of them performed on a distinct couple of edges of $I_{l'}$.
\end{mythm}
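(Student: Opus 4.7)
The plan is to mirror the argument used for Theorem \ref{main2}, but with Proposition \ref{findingparallels2} in place of Proposition \ref{findingparallels} so that the cancellation moves we perform are guaranteed to be degree-preserving (hence stay within the fixed degree $d$).

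First I would start with the given cyclically reduced equation $w\in\fI_g$ of degree $d$ and its corresponding reduced path $\sigma:I_l\rar G$. If $l\le32L^4d^2+16L^3d$, take $\sigma'=\sigma$, $w'=w$, and the empty sequence of insertion moves, and we are done. Otherwise, $l>32L^4d^2+16L^3d$, so by Proposition \ref{findingparallels2} any maximal reduction process for $f\circ\sigma$ contains two parallel couples $(s_\alpha,t_\alpha),(s_\beta,t_\beta)$ whose associated parallel cancellation move (Lemma \ref{parallelcancellation}) is degree-preserving. Apply that cancellation to produce a strictly shorter reduced path whose corresponding equation lies in $\fI_g$, is cyclically reduced, and still has degree $d$. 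Iterate: the length strictly decreases at each step, so after finitely many degree-preserving cancellations we reach a path $\sigma':I_{l'}\rar G$ whose corresponding equation $w'\in\fI_g$ has degree $d$ and satisfies $l'\le32L^4d^2+16L^3d$. This establishes (i).

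For (ii), each cancellation in the chain from $\sigma$ down to $\sigma'$ is degree-preserving, so by Lemma \ref{parallelinverses} every such cancellation admits an inverse parallel insertion move (Lemma \ref{parallelinsertion}); reversing the entire chain gives a sequence of insertion moves $\iota_1,\ldots,\iota_p$ that transforms $\sigma'$ back into $\sigma$. Among all such sequences, choose one of minimum length $p$. Now I argue, exactly as in the proof of Theorem \ref{main2}, that each $\iota_j$ must in fact act on a couple of edges of the original interval $I_{l'}$, and moreover on pairwise distinct couples. If some $\iota_r$ with $r>q$ acted on a couple created by an earlier $\iota_q$, pick an innermost such pair (so that no intermediate $\iota_j$ with $q<j<r$ touches a couple created by $\iota_q$); by Lemma \ref{insertioncommutes} we may reorder the intermediate moves to bring $\iota_r$ adjacent to $\iota_q$, and then Lemma \ref{insertioninsertion} fuses them into a single insertion move on the same couple of $I_{l'}$, contradicting minimality of $p$. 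Similarly, if two moves $\iota_q,\iota_r$ acted on the same couple of $I_{l'}$, Lemmas \ref{insertioncommutes} and \ref{insertionsame} would fuse them into one, again contradicting minimality. Hence the $p$ insertion moves act on pairwise distinct couples of $I_{l'}$, which has only $l'/2$ couples in any maximal reduction process, so $p\le l'/2$, giving (ii).

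The only genuinely new ingredient compared to Theorem \ref{main2} is the use of Proposition \ref{findingparallels2} to locate degree-preserving parallel couples; the manipulation of the insertion sequence by Lemmas \ref{insertioncommutes}, \ref{insertionsame}, \ref{insertioninsertion} is formally identical to the minimum-degree case. The step requiring the most care is ensuring that the iterated application of Proposition \ref{findingparallels2} terminates with the correct length bound: the bound $l\le32L^4d^2+16L^3d$ is exactly the threshold below which Proposition \ref{findingparallels2} fails to guarantee a degree-preserving parallel, so the length of $\sigma'$ produced by iterating degree-preserving cancellations is precisely that threshold, which is what (i) asserts.
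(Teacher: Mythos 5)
Your proposal is correct and is essentially the paper's own argument: the paper proves Theorem \ref{main4} by declaring it ``completely analogous'' to Theorem \ref{main2}, with Proposition \ref{findingparallels2} supplying degree-preserving parallel cancellations in place of Proposition \ref{findingparallels}, followed by the same reversal into insertion moves and the same minimal-sequence manipulation via Lemmas \ref{insertioncommutes}, \ref{insertionsame} and \ref{insertioninsertion}. Your write-up fills in exactly those steps (and matches the paper's level of detail, e.g.\ in tacitly keeping the intermediate paths cyclically reduced), so there is nothing substantive to add.
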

\begin{proof}
Completely analogous to the proof of Theorem \ref{main2}.
\end{proof}

\subsection{The set of possible degrees}\label{SubsectionD}

Let $H\sgr F_n$ be a finitely generated subgroup, let $g\in F_n$ be an element that depends on $H$, and let $\fI_g\nor H*\gen{x}$ be the ideal of the equations for $g$ over $H$.

\begin{mydef}\label{defDg}
Define $D_g=\{d\in\bN :\text{ there is a non-trivial equation }w\in\fI_g\text{ of degree }d\}$.
\end{mydef}

\begin{mylemma}\label{obtainingdegrees}
If $d,d'\in D_g$ and $k\ge0$ then $d+d'+2k\in D_g$.
\end{mylemma}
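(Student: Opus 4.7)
The plan is to produce an explicit element of $\fI_g$ with the required degree, by multiplying cyclically-reduced representatives of $d$ and $d'$ together with a suitable conjugator that contributes exactly $2k$ extra $x^{\pm 1}$ letters. Pick non-trivial cyclically-reduced $w\in\fI_g$ of degree $d$ and $w'\in\fI_g$ of degree $d'$ (possible since the degree is a conjugacy invariant and $\fI_g$ is normal). The candidate is
$$\tilde w \;=\; w \cdot u \cdot w' \cdot u^{-1}, \qquad u \;=\; h_a\,x^k\,h_b,$$
with $h_a,h_b\in H\setminus\{1\}$ to be chosen. Membership $\tilde w\in\fI_g$ is automatic: $u w' u^{-1}$ is a conjugate of $w'\in\fI_g$ and $\fI_g$ is closed under multiplication; equivalently, $\varphi_g(\tilde w)=1\cdot\varphi_g(u)\cdot1\cdot\varphi_g(u)^{-1}=1$.

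To compute the degree of $\tilde w$ I would view every element of $H*\gen{x}$ in free-product normal form, as an alternating sequence of $H$-syllables and $\gen{x}$-syllables; the degree then counts the total exponent of all $\gen{x}$-syllables in the cyclic reduction. The element $\tilde w$ contains four blocks of $\gen{x}$-syllables — the $x$-letters of $w$, the central $x^k$ of $u$, the $x$-letters of $w'$, and the central $x^{-k}$ of $u^{-1}$ — separated by $H$-blocks arising from the endpoints of $w, w'$ and from $h_a,h_b,h_b^{-1},h_a^{-1}$. The only way a separating $H$-block can collapse in the (cyclic) reduction is if at one of the four junctions $w\mid u$, $u\mid w'$, $w'\mid u^{-1}$, or (cyclically) $u^{-1}\mid w$ the adjacent $H$-syllables multiply to $1$. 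Each such collapse is a single equation of the form ``$h_a=\star$'' or ``$h_b=\star$'' for a specific element $\star\in H$, so avoiding all four collapses amounts to four inequalities. As $H$ is a non-trivial finitely generated free group, hence infinite, we can meet all four simultaneously; with those choices the four $\gen{x}$-blocks survive intact in the cyclic reduction, and $\tilde w$ has degree $d+k+d'+k=d+d'+2k$, as required.

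The only situation not covered is the degenerate case $H=\{1\}$, where there is no room to choose $h_a,h_b$. But then $\fI_g\subseteq\gen{x}$, and since $F_n$ is torsion-free the map $\varphi_g:\gen{x}\to F_n$ is either injective (forcing $\fI_g=\{1\}$, so the lemma is vacuous) or identically trivial, i.e.\ $g=1$ and $\fI_g=\gen{x}$, in which case $D_g=\bN\setminus\{0\}$ and the conclusion is obvious. The main technical point of the proof is really just the junction bookkeeping in the non-degenerate case; once the right form of $u$ is identified, the verification of non-collapse at each of the four junctions is routine.
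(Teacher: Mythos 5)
Your proposal is correct and follows essentially the same route as the paper, which likewise exhibits an explicit product of two conjugates, $\ol{h}wh\,\ol{x}^kw'x^k$ with $h\in H\setminus\{1,c_1\}$, using a buffer element of $H$ (and the $x^{\pm k}$ pair) to block cancellation at the junctions of the cyclically reduced factors. The only cosmetic difference is that when $k=0$ your two middle junctions merge into a single avoidable condition on the product $h_ah_b$ rather than separate conditions on $h_a$ and $h_b$, but the same finite bookkeeping goes through.
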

\begin{proof}
Let $w\in\fI_g$ be an equation of degree $d$: up to cyclic permutation, we can assume that $w$ is of the form $c_1x^{e_1}...c_\alpha x^{e_\alpha}$ with $c_1,...,c_\alpha\in H\setminus\{1\}$ and $e_1,...,e_\alpha\in\bZ\setminus\{0\}$. Similarly, let $w'\in\fI_g$ be an equation of degree $d'$, and similarly we assume that $w'=c_1'x^{e_1'}...c_{\beta}'x^{e_\beta'}$ with $c_1',...,c_\beta'\in H\setminus\{1\}$ and $e_1',...,e_\beta'\in\bZ\setminus\{0\}$. Without loss of generality, also assume that $e_\beta'>0$ and we take $h\in H\setminus\{1,c_1\}$. Then $w''=\ol{h}wh\ol{x}^kw'x^k$ belongs to $\fI_g$ and has degree $d+d'+2k$, for any $k\ge0$. The conclusion follows.
\end{proof}

Denote with $2\bN$ the set of non-negative even numbers.

\begin{mythm}\label{degreeset}
Exactly one of the following possibilities takes place:

(i) $D_g$ contains an odd number and $\bN\setminus D_g$ is finite.

(ii) $D_g$ contains only even numbers and $2\bN\setminus D_g$ is finite.
\end{mythm}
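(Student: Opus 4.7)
The plan is to exploit Lemma \ref{obtainingdegrees} as a kind of numerical semigroup closure property for $D_g$, and split according to whether $D_g$ contains an odd integer. Note that since $g$ depends on $H$, the ideal $\fI_g$ is nontrivial, so $D_g$ is nonempty; and the two alternatives (i) and (ii) are clearly mutually exclusive. So it suffices to show that if $D_g$ contains at least one odd integer, then $\bN \setminus D_g$ is finite, and if $D_g$ consists only of even integers, then $2\bN \setminus D_g$ is finite.

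Suppose first that $D_g$ contains an odd integer $d^*$. Applying Lemma \ref{obtainingdegrees} with $d = d' = d^*$ gives $2d^* + 2k \in D_g$ for every $k \ge 0$, so every even integer $\ge 2d^*$ lies in $D_g$. In particular $2d^* \in D_g$, and now applying the lemma with $d = d^*$ and $d' = 2d^*$ yields $3d^* + 2k \in D_g$ for every $k \ge 0$; since $3d^*$ is odd, this gives every odd integer $\ge 3d^*$. Combining the two conclusions, every integer $\ge 3d^*$ is in $D_g$, so $\bN \setminus D_g$ is finite.

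Suppose instead that every element of $D_g$ is even. Pick any $d_1 \in D_g$; applying the lemma with $d = d' = d_1$ gives $2d_1 + 2k \in D_g$ for every $k \ge 0$, so every even integer $\ge 2d_1$ is in $D_g$, whence $2\bN \setminus D_g$ is finite.

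There is really no obstacle here; the only minor subtlety is the ``bootstrap'' step in the odd case, where one first uses $d^* + d^* = 2d^*$ to produce an even element of $D_g$, and then combines it with $d^*$ again to produce the required arithmetic progression of odd integers. Lemma \ref{obtainingdegrees} does all the heavy lifting.
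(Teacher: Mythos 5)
Your proof is correct and follows essentially the same route as the paper: both arguments rest entirely on Lemma \ref{obtainingdegrees}, handling the all-even case by doubling an element, and the odd case by the same bootstrap (first produce $2d^*$, then combine with $d^*$ to get all sufficiently large odd degrees). No substantive difference.
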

\begin{proof}
If $\fI_g$ contains only equations of even degree, then we take any equation of even degree $d$, and by Lemma \ref{obtainingdegrees} we are able to obtain equations of degree $d+d+2k$ for every $k\ge0$. Thus in this case we have that $2\bN\setminus D_g$ is finite.

Suppose now $\fI_g$ contains an equation of odd degree $d$. Then by Lemma \ref{obtainingdegrees} we are able to obtain equations of degree $d+d+2k$ for every $k\ge0$, and thus equations of every even degree big enough. In particular we are able to obtain an equation of degree $2d$, and thus by Lemma \ref{obtainingdegrees} we are able to obtain equations of degree $2d+d+2k$ for every $k\ge0$, and thus equations of every odd degree big enough. Thus in this case we have that $\bN\setminus D_g$ is finite.
\end{proof}

In order to understand whether we fall into case (i) or (ii) of Theorem \ref{degreeset}, it is enough to look at a set of normal generators for $\fI_g$.

\begin{mylemma}\label{evendegree}
Let $H\sgr F_n$ be a finitely generated subgroup and let $g\in F_n$ be an element that depends on $H$. Suppose the set of equations $W\subseteq\fI_g$ generates $\fI_g$ as normal subgroup of $H*\gen{x}$, and suppose every equation $w\in W$ has even degree. Then every equation in $\fI_g$ has even degree.
\end{mylemma}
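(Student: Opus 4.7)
The plan is to produce a homomorphism from $H*\gen{x}$ to $\bZ/2\bZ$ whose value on any element matches the parity of its degree, and then invoke the universal property of normal closure.

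First I would define $\epsilon:H*\gen{x}\rar\bZ/2\bZ$ as the homomorphism characterized by $\epsilon(h)=0$ for every $h\in H$ and $\epsilon(x)=1$; this is well-defined by the universal property of the free product. For an arbitrary element $w\in H*\gen{x}$, writing it as a reduced word $c_1x^{e_1}c_2x^{e_2}\cdots c_\alpha x^{e_\alpha}c_{\alpha+1}$ (allowing some $c_i$'s to be trivial at the extremes), one has $\epsilon(w)\equiv e_1+\cdots+e_\alpha\pmod2$, i.e.\ $\epsilon$ is the exponent sum of $x$ modulo $2$.

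Next I would compare $\epsilon(w)$ with the degree $\deg(w)$. Passing from $w$ to its cyclic reduction does not change $\epsilon(w)$, since $\epsilon$ is invariant under conjugation. On the cyclically reduced representative, the number $N_+$ of occurrences of $x$ and the number $N_-$ of occurrences of $\ol{x}$ satisfy $\deg(w)=N_++N_-$ while $\epsilon(w)\equiv N_+-N_-\equiv N_++N_-\pmod2$. Hence $\epsilon(w)\equiv\deg(w)\pmod2$ for every $w\in H*\gen{x}$.

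To conclude, observe that $\ker\epsilon$ is a normal subgroup of $H*\gen{x}$. By hypothesis every $w\in W$ has even degree, and therefore $W\subseteq\ker\epsilon$. Since $\fI_g$ is the normal closure of $W$ and $\ker\epsilon$ is itself normal, we deduce $\fI_g\subseteq\ker\epsilon$. Thus every equation in $\fI_g$ has $\epsilon$-value $0$, which by the previous paragraph means even degree. There is no real obstacle here; the only point requiring attention is the well-definedness of $\epsilon$ (immediate from the universal property) and the parity comparison between exponent sum and degree on cyclic reductions.
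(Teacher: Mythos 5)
Your proposal is correct and coincides with the paper's own argument: the paper also uses the homomorphism $H*\gen{x}\rar\bZ/2\bZ$ sending $H$ to $0$ and $x$ to $1$, notes that its value detects degree parity, and concludes by observing that the normal closure of $W$ lies in the kernel. Your extra paragraph justifying that exponent sum and degree agree modulo $2$ on cyclic reductions simply makes explicit a step the paper leaves as an observation.
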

\begin{proof}
Consider the homomorphism $\phi:H*\gen{x}\rar\bZ/2\bZ$ defined by $\phi(h)=0$ for every $h\in H$ and $\phi(x)=1$. Observe that $\phi$ sends equations of even degree to $0$ and equations of odd degree to $1$. Since every $w\in W$ has even degree, we have that $W$ is contained in $\ker\phi$. But then the normal subgroup $\fI_g$ generated by $W$ is contained in $\ker\phi$ too, and thus $\fI_g$ only contains equations of even degree.
\end{proof}

\begin{mythm}\label{algorithm4}
Given $H\sgr F_n$ finitely generated and $g\in F_n$ that depends on $H$, there is an algorithm that:

(a) Determines whether we fall into case (i) or (ii) of Theorem \ref{degreeset}.

(b) Computes the finite set $\bN\setminus D_g$ or $2\bN\setminus D_g$ respectively.
\end{mythm}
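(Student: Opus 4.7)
The plan is to combine four previously established ingredients: Theorem~\ref{idealfingen} (to produce a finite normal generating set for $\fI_g$), Lemma~\ref{evendegree} (to certify the parity dichotomy), Lemma~\ref{obtainingdegrees} (to get an arithmetic closure of $D_g$ under $(d, d') \mapsto d + d' + 2k$), and Corollary~\ref{algorithm3} (to decide membership in $D_g$ for any single integer). The key observation is that part (a) is decided entirely by the parities of the degrees of a normal generating set, while part (b) reduces to finitely many queries to Corollary~\ref{algorithm3}, provided we can compute an explicit integer $N$ beyond which every element of the appropriate parity is known to lie in $D_g$.

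First I would apply Theorem~\ref{idealfingen} to compute a finite normal generating set $W \subseteq \fI_g$, and set $D' = \{\deg(w) : w \in W\} \subseteq D_g$. For part (a), the algorithm outputs case (i) if $D'$ contains an odd integer and case (ii) otherwise. Correctness is immediate: in the first situation the odd degree itself witnesses case (i), and in the second situation Lemma~\ref{evendegree} forces every element of $\fI_g$ to have even degree, which is exactly case (ii). I would also note that $0 \notin D_g$ always, because a non-trivial equation of degree $0$ is conjugate in $H \ast \gen{x}$ to some non-trivial $h \in H$, and $\varphi_g(h) = h \ne 1$ in $F_n$ since $H$ embeds into $F_n$.

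For part (b), I would produce a computable bound $N$ as follows. In case (i), pick any odd $d_1 \in D'$. Applying Lemma~\ref{obtainingdegrees} to the pair $(d_1, d_1)$ gives $2d_1 + 2k \in D_g$ for every $k \ge 0$, so in particular $2d_1 \in D_g$; applying the lemma again to $(d_1, 2d_1)$ gives $3d_1 + 2k \in D_g$ for every $k \ge 0$. Together these two arithmetic progressions cover every integer $\ge 3d_1$, so $N := 3d_1$ works, and $\bN \setminus D_g \subseteq \{0, 1, \dots, 3d_1 - 1\}$. In case (ii), pick any (even) $d_0 \in D'$; Lemma~\ref{obtainingdegrees} applied to $(d_0, d_0)$ gives $2d_0 + 2k \in D_g$ for every $k \ge 0$, so $N := 2d_0$ works and $2\bN \setminus D_g \subseteq \{0, 2, \dots, 2d_0 - 2\}$. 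In either case, the algorithm now runs Corollary~\ref{algorithm3} on each of the finitely many remaining candidates $d \ge 1$ below $N$ of the appropriate parity, declaring $d$ to lie in the complement precisely when Corollary~\ref{algorithm3} reports that $\fI_g$ has no equation of degree $d$, and always adding $0$ to the complement.

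There is no substantive obstacle; the entire argument is a book-keeping combination of the earlier machinery. The only point that requires any care is verifying that $N$ is genuinely computable from $W$, which is clear because $W$ and the degrees of its elements are explicit outputs of the earlier algorithms, and that the two inclusions for $\bN \setminus D_g$ and $2\bN \setminus D_g$ really hold with the chosen $N$, which is immediate from the closure identities coming from Lemma~\ref{obtainingdegrees}.
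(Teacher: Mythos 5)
Your proposal is correct and follows essentially the same route as the paper's proof: compute a finite normal generating set via Theorem~\ref{idealfingen}, decide the parity dichotomy from the generators' degrees using Lemma~\ref{evendegree}, derive the bound $3d_1$ (resp.\ $2d_0$) exactly as in the proof of Theorem~\ref{degreeset} via Lemma~\ref{obtainingdegrees}, and settle the finitely many remaining degrees with Corollary~\ref{algorithm3}. Your explicit remark that $0\notin D_g$ is a minor extra bookkeeping point not emphasized in the paper, but it does not change the argument.
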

\begin{proof}
Let $\fI_g=\ggen{w_1,...,w_k}$ be a finite set of normal generators for $\fI_g$, which can be obtained with the algorithm of Theorem \ref{idealfingen}. According to Lemma \ref{evendegree}, if one of $w_1,...,w_k$ has odd degree then we fall into case (i) of Theorem \ref{degreeset}, otherwise we fall in case (ii) of Theorem \ref{degreeset}.

If we fall into case (i), then we take $w_i$ of degree $d_i$ odd, and with the same proof of Theorem \ref{degreeset} we have that $\bN\setminus D_g\subseteq\{1,...,3d_i\}$. For each degree $d\in\{1,...,3d_i\}$ we use Corollary \ref{algorithm3} to determine whether $d$ belongs to $D_g$. If we fall into case (ii), we perform an analogous procedure.
\end{proof}

\section{Examples}\label{SectionExamples}

We now provide a few examples for the reader, to illustrate the techniques introduced in the present paper. In each example $D_g$ denotes the set introduced in Definition \ref{defDg} and $d_{min}$ denotes the minimum of $D_g$.

\subsection{Cyclic subgroups}\label{examplecyclic}

Let $F_n=\gen{a_1,...,a_n}$ and suppose that $H$ has rank $1$, let's say $H=\gen{h}$ for some $h\in F_n$ with $h\not=1$. In order for an element $g$ to depend on $H$, we must have that $g,h$ belong to the same cyclic subgroup of $F_n$. We can use $\gen{H,g}$ as ambient free group instead of $F_n$: without loss of generality, in the following we assume that $F_n=\gen{a}$ and that $H=\gen{h}$ where $h=a^m$ with $m\ge1$, and that $g=a^k$ with $k\ge0$ coprime with $m$.

The graph $G=\bcore{H}\vee\bcore{\gen{g}}$ here has rank $2$ while $\bcore{\gen{H,g}}$ has rank $1$. This means that the algorithm of Theorem \ref{idealfingen} produces a single generator for the ideal $\fI_g\nor H*\gen{x}$. One possible such generator $w_{m,k}$ for each $m\ge1$ and $k\ge0$ coprime can be obtained by means of the following recursive formula:
$$\begin{cases}
w_{1,0}(h,x)=\ol{x}\\
w_{m,k}(h,x)=w_{m-k,k}(h\ol{x},x) \text{ for } m>k\\
w_{m,k}(h,x)=w_{m,k-m}(h,x\ol{h}) \text{ for } m\le k
\end{cases}$$
Moreover, with this definition it is possible to prove by induction that $w_{m,k}(h,x)$ contains $k$ occurrences of $h$, no occurrence of $\ol{h}$, no occurrence of $x$, and $m$ occurrences of $\ol{x}$. In particular $w_{m,k}\in\fI_g$ is an equation of degree $m$.

\begin{myrmk}
In the case $h=a^5$ and $x=a^2$ we have the generator $w_{5,2}(h,x)=h\ol{x}^2h\ol{x}^3$ for the ideal $\fI_g$. We observe that the most immediate candidate $h^2\ol{x}^5$ doesn't work, because it is contained in $\fI_g$ but it doesn't generate the whole ideal.
\end{myrmk}

\begin{myrmk}
The following is a well-known property of one-relator groups due to Magnus: if two elements of a free group generate the same normal subgroup, then they coincide, up to conjugation and inverse. In particular, the generator $w_{m,k}$ defined above is essentially the unique generator for the ideal $\fI_g$.
\end{myrmk}

Let $w\in\gen{h,x}$ be a non-trivial cyclically reduced element, which up to conjugation can be written in the form $w=h^{e_1}x^{f_1}...h^{e_r}x^{f_r}$ with $r\ge1$ and $e_1,...,e_r,f_1,...,f_r\in\bZ\setminus\{0\}$. The condition $w\in\fI_g$ is equivalent to $(e_1+...+e_r)m+(f_1+...+f_r)k=0$, and since $m,k$ are coprime this means that for some $p\in\bZ$ we have $f_1+...+f_r=pm$ and $e_1+...+e_r=-pk$. The degree of the equation is $d=\abs{f_1}+...+\abs{f_r}$. 

Suppose $m=1$. Then we have $w_{1,k}=h^k\ol{x}$. In this case $d_{min}=1$ and $D_g=\bN\setminus\{0\}$.

Suppose $m\ge2$ is even. Then $d_{min}=2$ and $D_g=2\bN\setminus\{0\}$. For the $\supseteq$ inclusion, we have the equation $[h,x^s]$ of degree $2s$ for each $s\ge1$. For the $\subseteq$ inclusion, notice that the unique generator $w_{m,k}$ has even degree, and thus by Lemma \ref{evendegree} each equation has even degree.

Suppose $m\ge2$ is odd. Then $d_{min}=2$ and $D_g=\{d : d\ge2$ even$\}\cup\{d : d\ge m$ odd$\}$. For the $\supseteq$ inclusion, we have the equation $[h,x]$ of degree $2$ and the equation $w_{m,k}$ of degree $m$, and we can use Lemma \ref{obtainingdegrees}. For the $\subseteq$ inclusion, we notice that, if an equation is written in the form $w=h^{e_1}x^{f_1}...h^{e_r}x^{f_r}$ as above, then either $f_1+...+f_r=0$, in which case the degree $d=\abs{f_1}+...+\abs{f_r}$ is even, or $m\le\abs{f_1+...+f_r}\le\abs{f_1}+...+\abs{f_r}=d$.

\subsection{An ideal with only even-degree equations}\label{example46}

Let $F_2=\gen{a,b}$ and consider the subgroup $H=\gen{h_1,h_2}$ with $h_1=ba$ and $h_2=ab^2\ol{a}$ and the element $g=a$. We can build the corresponding graph $G$, see figure \ref{example46G}, and we have that $\pi_1(G,*)$ is a free group with three generators $[\mu_{h_1}],[\mu_{h_2}],[\mu_g]$, which are the homotopy classes of the reduced paths $\mu_{h_1},\mu_{h_2},\mu_g$ corresponding to the elements $h_1,h_2\in H$ and $g$ respectively. We can perform a sequence of rank-preserving folding operations on $G$, see figure \ref{example46fold}, and we end up with a rose $R'$ with one $a$-labeled edge $e_1$ and two $b$-labeled edges $e_2,e_3$. Let $p:(G,*)\rar(R',*)$ be the map given by the composition of the folding operations, and notice that by Proposition \ref{folding2} this is a pointed homotopy equivalence: a pointed homotopy inverse can be built following the chain of folding operations, and is given by $q:(R',*)\rar(G,*)$ which sends the edge $e_1$ to the path $\mu_g$, the edge $e_2$ to the path $\mu_{h_1}\ol\mu_g$ and the edge $e_3$ to the path $\ol\mu_g\mu_{h_2}\mu_g\mu_g\ol\mu_{h_1}$. In order to obtain generators for the kernel $\fI_g\sgr H*\gen{x}$ we have to look at the image $q_*(e_3\ol{e}_2)$: we obtain that the kernel is generated (as a normal subgroup) by just one equation $\fI_g=\ggen{\ol{x}h_2xx\ol{h}_1x\ol{h}_1}$.

\begin{figure}[h!]
\centering
\begin{tikzpicture}[scale=1]
\node (1) at (0,0) {$*$};
\node (2) at (-2,1) {.};
\node (3) at (-2,-1) {.};
\node (4) at (-4,-1) {.};

\draw[->,out=170,in=-50,looseness=1] (1) to node[below]{$b$} (2);
\draw[->,out=0,in=120] (2) to node[above]{$a$} (1);
\draw[->] (1) to node[below]{$a$} (3);
\draw[->,out=200,in=-20,looseness=1] (3) to node[below]{$b$} (4);
\draw[->,out=20,in=160,looseness=1] (4) to node[above]{$b$} (3);

\draw[->,out=40,in=-40,looseness=13] (1) to node[right]{$a$} (1);
\end{tikzpicture}
\caption{In the picture we can see the graph $G$ of Example \ref{example46}. Here $H=\gen{ba,ab^2\ol{a}}$ and $g=a$. On the left of the basepoint we have the graph $\bcore{H}$, while on the right we have $\bcore{\gen{g}}$.}
\label{example46G}
\end{figure}

\begin{figure}[h!]
\centering
\begin{tikzpicture}[scale=1]
\node (1) at (0,0) {$*$};
\node (2) at (-2,1) {.};
\node (3) at (-2,-1) {.};
\node (4) at (-4,-1) {.};

\draw[->,out=40,in=-40,looseness=13,line width=0.5mm] (1) to node[right]{$a$} (1);
\draw[->,out=170,in=-50,looseness=1] (1) to node[below]{$b$} (2);
\draw[->,out=0,in=120,line width=0.5mm] (2) to node[above]{$a$} (1);
\draw[->] (1) to node[below]{$a$} (3);
\draw[->,out=200,in=-20,looseness=1] (3) to node[below]{$b$} (4);
\draw[->,out=20,in=160,looseness=1] (4) to node[above]{$b$} (3);

\node (1p) at (0,-3) {$*$};
\node (3p) at (-2,-4) {.};
\node (4p) at (-4,-4) {.};

\draw[->,out=40,in=-40,looseness=13,line width=0.5mm] (1p) to node[right]{$a$} (1p);
\draw[->,out=160,in=80,looseness=18] (1p) to node[left]{$b$} (1p);
\draw[->,line width=0.5mm] (1p) to node[below]{$a$} (3p);
\draw[->,out=200,in=-20,looseness=1] (3p) to node[below]{$b$} (4p);
\draw[->,out=20,in=160,looseness=1] (4p) to node[above]{$b$} (3p);

\node (1q) at (0,-6) {$*$};
\node (4q) at (-2,-7) {.};

\draw[->,out=40,in=-40,looseness=13] (1q) to node[right]{$a$} (1q);
\draw[->,out=160,in=80,looseness=18,line width=0.5mm] (1q) to node[left]{$b$} (1q);
\draw[->,out=230,in=10,looseness=1] (1q) to node[below]{$b$} (4q);
\draw[->,out=50,in=190,looseness=1,line width=0.5mm] (4q) to node[above]{$b$} (1q);

\node (1r) at (0,-9) {$*$};

\draw[->,out=40,in=-40,looseness=13] (1r) to node[right]{$a$} (1r);
\draw[->,out=160,in=80,looseness=18] (1r) to node[left]{$b$} (1r);
\draw[->,out=280,in=200,looseness=18] (1r) to node[left]{$b$} (1r);

\node (a) at (3,0) {$G$};
\node (b) at (3,-9) {$R'$};
\end{tikzpicture}
\caption{A maximal rank-preserving folding sequence for the graph $G$ of Example \ref{example46}. At each step, we highlight in bold the two edges that are going to be folded in the next step.}\label{example46fold}
\end{figure}
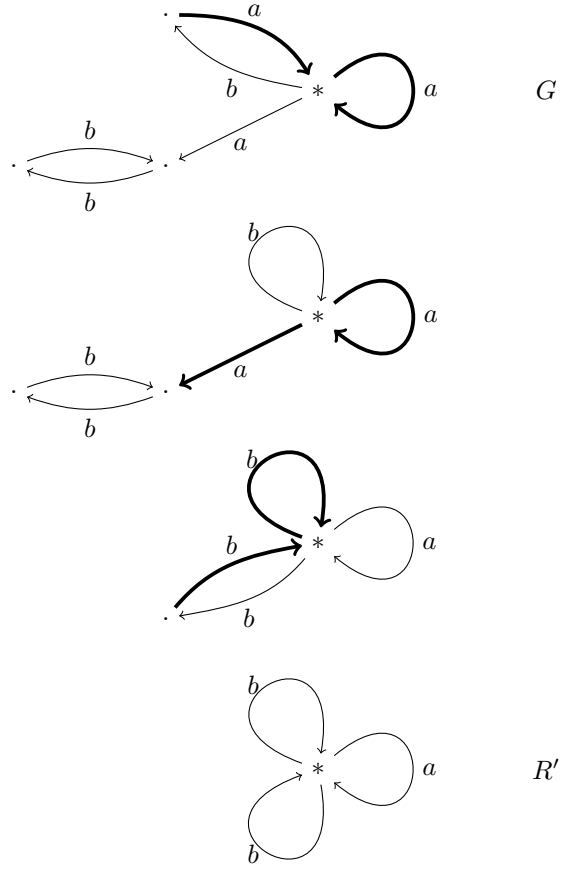

We observe that this unique generator has even degree, and thus Lemma \ref{evendegree} tells us that every equation in $\fI_g$ has even degree. We shall explain why there is no equation of degree $2$, there is exactly one equation of degree $4$ up to conjugation and inverse and there are equations of degree $6$. By Lemma \ref{obtainingdegrees} it follows that $d_{min}=4$ and $D_g=2\bN\setminus\{0,2\}$.

\begin{myrmk}
In order characterize all the equations of degrees $2,4,6$, we could use Theorem \ref{main4}; this is too long to do by hands, but quite easy to do with the aid of a computer. It is also possible to prove them with some combinatorics of the cancellation between words; we do not provide a full proof here, but rather a sketch.

Consider the map between free groups $\psi:\gen{h_1,h_2,x}\rar\gen{h_1,x}$ with $\psi(h_1)=h_1,\psi(x)=x,\psi(h_2)=xh_1\ol{x}h_1\ol{x}\ol{x}$ and notice that $\fI_g=\ker\psi$. Up to conjugation, an equation $w\in\fI_g$ can be written as reduced word $w(h_1,h_2,x)=u_1(h_1,h_2)x^{e_1}...u_r(h_1,h_2)x^{e_r}$. We now substitute each occurrence of $h_2$ with $xh_1\ol{x}h_1\ol{x}\ol{x}$, and each occurrence of $\ol{h_2}$ with $xx\ol{h}_1x\ol{h}_1\ol{x}$, and after this substitution we reduce the obtained word, until we get the trivial word. During the reduction process, each block $xh_1\ol{x}h_1\ol{x}\ol{x}$ and $xx\ol{h}_1x\ol{h}_1\ol{x}$, obtained from an occurrence of $h_2$ or $\ol{h}_2$, will completely cancel at some point: we take the occurrence of $h_2$ such that the corresponding block is the first to completely cancel during the reduction process. We now look at the word $w$ near that occurrence of $h_2$ or $\ol{h}_2$, and we obtain that $w$ contains at least one of
$$h_2xx\ol{h}_1x,\
\ol{x}h_2xx,\
x\ol{h}_1\ol{x}h_2x,\
x\ol{h}_1x\ol{h}_1\ol{x}h_2,\
h_2xx\ol{h}_1\ol{x}\ol{h}_2,\
\ol{h}_2xx\ol{h}_1\ol{x}h_2$$
(or of their inverses) as a subword. These can be substituted with (respectively)
$$xh_1,\ h_1\ol{x}h_1,\ h_1\ol{x},\ \ol{x},\ xx\ol{h}_1\ol{x},\ xx\ol{h}_1\ol{x}$$
in order to get a shorter (possibly not reduced) equation.

This immediately implies that $\fI_g$ contains no equation of degree $2$, and it also allows to deduce that the only equations of degree $4$ are the conjugates of the generator. With some more work, it is also possible to give a characterization of all the degree $6$ equations.
\end{myrmk}

%
%

\subsection{An ideal with both even-degree and odd-degree equations}\label{example23}

Consider the subgroup $H=\gen{h_1,h_2}$ with $h_1=b$ and $h_2=ababa$ and the element $g=a$. We see the corresponding graph $G$ in figure \ref{example23G}. We can now proceed as in Example \ref{example46}: we choose a maximal sequence of rank-preserving folding operations for $G$, we build an homotopy inverse to the sequence of folding operations, we obtain a generator for the normal subgroup $\fI_g\nor H*\gen{x}$. Whatever sequence of folding operations you choose, you will always get the same generator, up to inverse and cyclic permutations, namely $\fI_g=\ggen{\ol{h}_2xh_1xh_1x}$.

\begin{figure}[h!]
\centering
\begin{tikzpicture}[scale=1]
\node (1) at (0,0) {$*$};
\node (2) at (-1.5,0) {.};
\node (3) at (-3,0) {.};
\node (4) at (-3,-1.5) {.};
\node (5) at (-1.5,-1.5) {.};

\draw[->,out=160,in=90,looseness=20] (1) to node[above]{$b$} (1);
\draw[->] (1) to node[below]{$a$} (2);
\draw[->] (2) to node[below]{$b$} (3);
\draw[->] (3) to node[left]{$a$} (4);
\draw[->] (4) to node[above]{$b$} (5);
\draw[->] (5) to node[below]{$a$} (1);

\draw[->,out=40,in=-40,looseness=13] (1) to node[right]{$a$} (1);
\end{tikzpicture}
\caption{In the picture we can see the graph $G$ of Example \ref{example23}. Here $H=\gen{b,ababa}$ and $g=a$. On the left of the basepoint we have the graph $\bcore{H}$. On the right of the basepoint we have the graph $\bcore{\gen{g}}$.}
\label{example23G}
\end{figure}

We have that $\fI_g$ contains no equation of degree $1$. It contains equations of degree $2$, which are exactly the ones of the form $[(h_2h_1)^i,xh_1]$ for $i\not=0$ up to conjugation and inverses. It also contains equations of degree $3$ (possibly essentially different from the generator). By Lemma \ref{obtainingdegrees} it follows that $d_{min}=2$ and $D_g=\{d : d\ge2\}$. We observe that the equations of minimum possible degree are not enough in this case to generate the whole ideal: in fact, according to Lemma \ref{evendegree}, equations of degree $2$ generate a normal subgroup containing only even-degree equations.

\begin{myrmk}
As in Example \ref{example46}, it is possible to characterize equations of degree $2$ and $3$ with Theorem \ref{main4}, using a computer, or it is possible to do it by hands with some combinatorics of the cancellations inside the words; and again, for this second method, we provide a sketch below.

Consider the map between free groups $\psi:\gen{h_1,h_2,x}\rar\gen{h_1,x}$ with $\psi(h_1)=h_1,\psi(x)=x,\psi(h_2)=xh_1xh_1x$ and notice that $\fI_g=\ker\psi$. Up to conjugation, an equation $w\in\fI_g$ can be written as reduced word $w(h_1,h_2,x)=u_1(h_1,h_2)x^{e_1}...u_r(h_1,h_2)x^{e_r}$. We now substitute each occurrence of $h_2$ with $xh_1xh_1x$ and each occurrence of $\ol{h_2}$ with $xh_1xh_1x$; as in Example \ref{example46} we take the occurrence of $h_2$ or $\ol{h}_2$ such that the corresponding $xh_1xh_1x$ or $\ol{x}\ol{h}_1\ol{x}\ol{h}_1\ol{x}$ is the first to completely cancel, and we look at the word $w$ near that occurrence of $h_2$ or $\ol{h}_2$. We obtain that $w$ contains at least one of
$$\ol{h}_2xh_1x,\ x\ol{h}_2x,\ xh_1x\ol{h}_2,\ \ol{h}_2xh_1h_2,\ h_2h_1x\ol{h}_2,$$
(or of their inverses) as a subword. These can be substituted with (respectively)
$$\ol{x}\ol{h}_1,\ \ol{h}_1\ol{x}\ol{h}_1,\ \ol{h}_1\ol{x},\ h_1x,\ xh_1$$
in order to get a shorter (possibly not reduced) equation.

Dealing with some cases it can be proved that equations of degree $2$ are exactly the ones of the form $[(h_2h_1)^i,xh_1]$ for $i\not=0$ up to conjugation and inverses, and one can produce equations of degree $3$ which are essentially different from the generator.
\end{myrmk}

\subsection{An ideal with two generators}\label{example234}

Consider the subgroup $H=\gen{h_1,h_2,h_3}$ with $h_1=a^2\ol{b}\ol{a}$ and $h_2=a^3$ and $h_3=ba\ol{b}$ and the element $g=a^2\ol{b}$. We can see the corresponding graph $G$ in figure \ref{example234G} and a maximal sequence of rank-preserving folding operations in figure \ref{example234fold}. The group $\pi_1(G,*)$ is a free group with four generators $[\mu_{h_1}],[\mu_{h_2}],[\mu_{h_3}],[\mu_g]$, which are the homotopy classes of the reduced paths $\mu_{h_1},\mu_{h_2},\mu_{h_3},\mu_g$ corresponding to the elements $h_1,h_2,h_3\in H$ and $g$ respectively. At the end of the sequence of folding operations we obtain a rose $R'$ with one $b$-labeled edge $e_1$ and three $a$-labeled edges $e_2,e_3,e_4$. The map $p:(G,*)\rar(R',*)$ given by the composition of the folding operations is an homotopy equivalence, according to Proposition \ref{folding2}, and a pointed homotopy inverse is $q:(R',*)\rar(G,*)$ which sends the edge $e_1$ to the path $\mu_{h_3}\ol\mu_g\ol\mu_{h_1}\mu_g$, the edge $e_2$ to the path $\ol\mu_g\mu_{h_1}\mu_g\ol\mu_{h_3}\ol\mu_g\mu_{h_2}$, the edge $e_3$ to the path $\ol\mu_{h_1}\mu_g$ and the edge $e_4$ to the path $\ol\mu_g\mu_{h_1}\mu_g\mu_{h_3}\ol\mu_g\ol\mu_{h_1}\mu_g$. We look at the images $q_*(e_2\ol{e}_3)$ and $q_*(e_4\ol{e}_3)$ and we obtain that the kernel $\fI_g\sgr H*\gen{x}$ is generated (as normal subgroup) by the equations $\fI_g=\ggen{\ol{x}h_1x\ol{h}_3\ol{x}h_2\ol{x}h_1,\ol{x}h_1xh_3\ol{x}}$.

\begin{figure}[h!]
\centering
\begin{tikzpicture}[scale=1]
\node (1) at (0,0) {*};
\node (2) at (-2,1.5) {.};
\node (3) at (-2,0) {.};
\node (4) at (-2,-1) {.};
\node (5) at (2,1) {.};
\node (6) at (2,-1) {.};

\draw[->] (1) to node[above]{$a$} (2);
\draw[->] (2) to node[right]{$a$} (3);
\draw[->,out=-120,in=120,looseness=1] (2) to node[left]{$b$} (3);
\draw[->] (3) to node[above]{$a$} (1);
\draw[->] (1) to node[below]{$b$} (4);
\draw[->,out=120,in=180,looseness=25] (4) to node[left]{$a$} (4);

\draw[->] (1) to node[below]{$a$} (5);
\draw[->] (5) to node[right]{$a$} (6);
\draw[->] (1) to node[below]{$b$} (6);
\end{tikzpicture}
\caption{In the picture we can see the graph $G$ of Example \ref{example234}. Here $H=\gen{a^2\ol{b}\ol{a},a^3,ba\ol{b}}$ and $g=a^2\ol{b}$. On the left of the basepoint we have the graph $\bcore{H}$. On the right of the basepoint we have the graph $\bcore{\gen{g}}$.}
\label{example234G}
\end{figure}

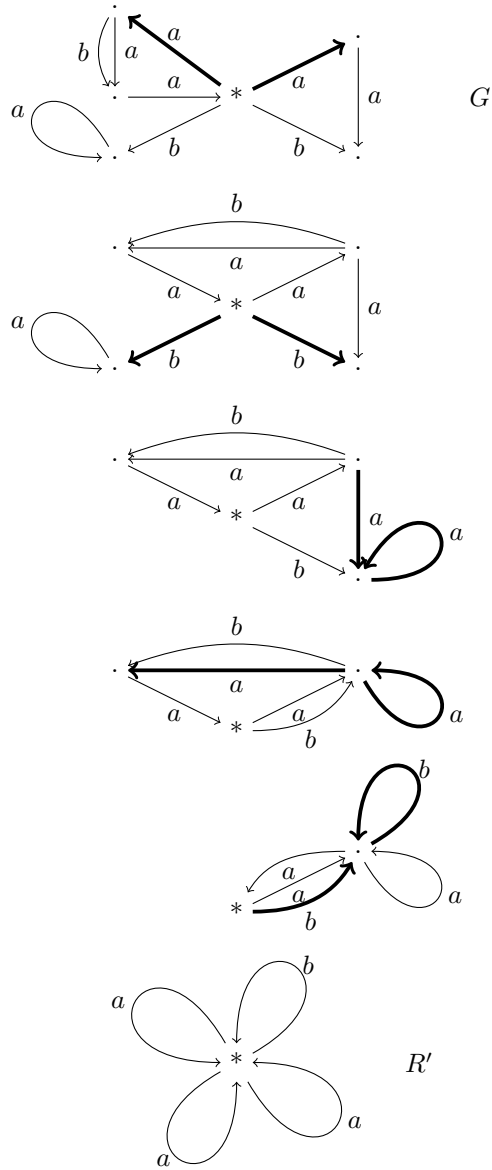
\begin{figure}[h!]
\centering
\begin{tikzpicture}[scale=0.8]
\node (1) at (0,0) {*};
\node (2) at (-2,1.5) {.};
\node (3) at (-2,0) {.};
\node (4) at (-2,-1) {.};
\node (5) at (2,1) {.};
\node (6) at (2,-1) {.};

\draw[->,line width=0.5mm] (1) to node[above]{$a$} (2);
\draw[->] (2) to node[right]{$a$} (3);
\draw[->,out=-120,in=120,looseness=1] (2) to node[left]{$b$} (3);
\draw[->] (3) to node[above]{$a$} (1);
\draw[->] (1) to node[below]{$b$} (4);
\draw[->,out=120,in=180,looseness=25] (4) to node[left]{$a$} (4);
\draw[->,line width=0.5mm] (1) to node[below]{$a$} (5);
\draw[->] (5) to node[right]{$a$} (6);
\draw[->] (1) to node[below]{$b$} (6);

\node (1) at (0,-3.5) {*};
\node (3) at (-2,-2.5) {.};
\node (4) at (-2,-4.5) {.};
\node (5) at (2,-2.5) {.};
\node (6) at (2,-4.5) {.};

\draw[->] (5) to node[below]{$a$} (3);
\draw[->,out=160,in=20,looseness=1] (5) to node[above]{$b$} (3);
\draw[->] (3) to node[below]{$a$} (1);
\draw[->] (1) to node[below]{$a$} (5);
\draw[->] (5) to node[right]{$a$} (6);
\draw[->,line width=0.5mm] (1) to node[below]{$b$} (6);
\draw[->,line width=0.5mm] (1) to node[below]{$b$} (4);
\draw[->,out=120,in=180,looseness=25] (4) to node[left]{$a$} (4);

\node (1) at (0,-7) {*};
\node (3) at (-2,-6) {.};
\node (5) at (2,-6) {.};
\node (6) at (2,-8) {.};

\draw[->] (5) to node[below]{$a$} (3);
\draw[->,out=160,in=20,looseness=1] (5) to node[above]{$b$} (3);
\draw[->] (3) to node[below]{$a$} (1);
\draw[->] (1) to node[below]{$a$} (5);
\draw[->,line width=0.5mm] (5) to node[right]{$a$} (6);
\draw[->] (1) to node[below]{$b$} (6);
\draw[->,out=0,in=60,looseness=25,line width=0.5mm] (6) to node[right]{$a$} (6);

\node (1) at (0,-10.5) {*};
\node (3) at (-2,-9.5) {.};
\node (5) at (2,-9.5) {.};

\draw[->,line width=0.5mm] (5) to node[below]{$a$} (3);
\draw[->,out=160,in=20,looseness=1] (5) to node[above]{$b$} (3);
\draw[->] (3) to node[below]{$a$} (1);
\draw[->] (1) to node[below]{$a$} (5);
\draw[->,out=0,in=-120,looseness=1] (1) to node[below]{$b$} (5);
\draw[->,out=-60,in=0,looseness=25,line width=0.5mm] (5) to node[right]{$a$} (5);

\node (1) at (0,-13.5) {*};
\node (5) at (2,-12.5) {.};

\draw[->,out=30,in=90,looseness=25,line width=0.5mm] (5) to node[right]{$b$} (5);
\draw[->,out=180,in=60,looseness=1] (5) to node[below]{$a$} (1);
\draw[->] (1) to node[below]{$a$} (5);
\draw[->,out=0,in=-120,looseness=1,line width=0.5mm] (1) to node[below]{$b$} (5);
\draw[->,out=-60,in=0,looseness=25] (5) to node[right]{$a$} (5);

\node (1) at (0,-16) {*};

\draw[->,out=30,in=90,looseness=20] (1) to node[right]{$b$} (1);
\draw[->,out=120,in=180,looseness=20] (1) to node[left]{$a$} (1);
\draw[->,out=210,in=-90,looseness=20] (1) to node[left]{$a$} (1);
\draw[->,out=-60,in=0,looseness=20] (1) to node[right]{$a$} (1);

\node (a) at (4,0) {$G$};
\node (b) at (3,-16) {$R'$};
\end{tikzpicture}
\caption{A maximal rank-preserving folding sequence for the graph $G$ of Example \ref{example234}. At each step, we highlight in bold the two edges that are going to be folded in the next step.}
\label{example234fold}
\end{figure}

It is easy to show that $\fI_g$ contains equations of degree $2$, but not equations of degree $1$. It follows that $d_{min}=2$ and $D_g=\bN\setminus\{0,1\}$. We observe that, despite $d_{min}=2$, equations of degree $2$ are not enough to generate the whole ideal $\fI_g$; in fact, the equations of degree $2$ generate a normal subgroup containing only even-degree equations (see Lemma \ref{evendegree}), while $\fI_g$ also contains equations of odd degree.

\section{Equations in more variables}\label{SectionMultivariate}

We point out that most of the results of this paper can be generalized to equations in more than one variable. Let $F_n$ be a free group generated by $n$ elements $a_1,...,a_n$. Let $H\sgr F_n$ be a finitely generated subgroup and let $\gen{x_1},\gen{x_2},...,\gen{x_m}$ be infinite cyclic groups.

\begin{mydef}
An \textbf{equation} with coefficients in $H$ is an element $w\in H*\gen{x_1}*...*\gen{x_m}$.
\end{mydef}

\begin{mydef}
Define the \textbf{multi-degree} of an equation $w\in H*\gen{x_1}*...*\gen{x_m}$ as the $m$-tuple $(d_1,...,d_m)$ of integer numbers, where $d_i$ is the number of occurrences of $x_i$ and of $\ol{x}_i$ in the cyclic reduction of $w$.
\end{mydef}

For $(g_1,...,g_m)\in(F_n)^m$ we define the map $\varphi_{g_1,...,g_m}:H*\gen{x_1}*...*\gen{x_m}\rar F_n$ such that $\restr{\varphi_{g_1,...,g_m}}{H}$ is the inclusion and $\varphi_{g_1,...,g_m}(x_i)=g_i$ for $i=1,...,m$.

\begin{mydef}
We say that an $m$-tuple $(g_1,...,g_m)\in(F_n)^m$ is a \textbf{solution} to the equation $w\in H*\gen{x_1}*...*\gen{x_m}$ if $w\in\ker\varphi_{g_1,...,g_m}$.
\end{mydef}

\begin{mydef}
For $(g_1,...,g_m)\in(F_n)^m$ we define the \textbf{ideal} $\fI_{g_1,...,g_m}$ to be the normal subgroup $\fI_{g_1,...,g_m}=\ker\varphi_{g_1,...,g_m}\nor H*\gen{x_1}*...*\gen{x_m}$.
\end{mydef}

\begin{mydef}
We say that $(g_1,...,g_m)\in(F_n)^m$ \textbf{depends on $H$} if $\fI_{g_1,...,g_m}$ is non-trivial.
\end{mydef}

Fix now an $m$-tuple $(g_1,...,g_m)\in(F_n)^m$. As we did in the one-variable case, we now want to see equations as paths in a suitable graph. Let $G=\bcore{H}\vee\bcore{\gen{g_1}}\vee...\vee\bcore{\gen{g_m}}$ be the $\grafo$ given by the disjoint union of $\bcore{H},\bcore{\gen{g_1}},...,\bcore{\gen{g_m}}$, where we identify all the basepoints to a unique point. Let also $f:G\rar R_n$ be the labeling map.

With the same argument as in Theorem \ref{idealfingen} we can prove the following theorem:

\begin{mythm}
We have the following:

(i) The ideal $\fI_{g_1,...,g_m}\nor H*\gen{x_1}*...*\gen{x_m}$ is finitely generated as a normal subgroup.

(ii) The set of generators for $\fI_{g_1,...,g_m}$ can be taken to be a subset of a basis for $H*\gen{x_1}*...*\gen{x_m}$.

(iii) There is an algorithm that, given $H$ and $g_1,...,g_m$, computes a finite set of normal generators for $\fI_{g_1,...,g_m}$ which is also a subset of a basis for $H*\gen{x_1}*...*\gen{x_m}$.
\end{mythm}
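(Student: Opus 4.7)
The plan is to reproduce the argument of Theorem \ref{idealfingen} almost verbatim, replacing the single wedge summand $\bcore{\gen{g}}$ with the iterated wedge $\bcore{\gen{g_1}}\vee...\vee\bcore{\gen{g_m}}$. First I would observe that, since $G$ is a wedge of pointed graphs along the basepoint, the fundamental group $\pi_1(G,*)$ is the free product $\pi_1(\bcore{H},*)*\pi_1(\bcore{\gen{g_1}},*)*...*\pi_1(\bcore{\gen{g_m}},*)$; together with the canonical isomorphisms $\pi_1(\bcore{H},*)\cong H$ and $\pi_1(\bcore{\gen{g_i}},*)\cong\gen{x_i}$ (the latter sending $x_i$ to the loop reading $g_i$), this yields an isomorphism $\theta:H*\gen{x_1}*...*\gen{x_m}\rar\pi_1(G,*)$. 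By construction the labeling map $f:G\rar R_n$ satisfies $f_*\circ\theta=\varphi_{g_1,...,g_m}$, so $\fI_{g_1,...,g_m}=\theta^{-1}(\ker f_*)$.

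Next I would run the exact two-stage folding procedure of Proposition \ref{folding2} on $G$: first a maximal chain $G=G^0\rar...\rar G^k$ of rank-preserving folds, with composition $p:G\rar G^k$, and then a maximal chain $G^k\rar...\rar G^m$ of (necessarily non-rank-preserving) folds, with composition $q:G^k\rar G^m$. As in the univariate proof, $p$ is a pointed homotopy equivalence, so $p_*$ is an isomorphism, while $f^m_*:\pi_1(G^m,*)\rar F_n$ is injective by Proposition \ref{folding}. The commutative diagram analogous to Figure \ref{maps} then gives $\fI_{g_1,...,g_m}=\theta^{-1}(p_*^{-1}(\ker q_*))$.

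To write down explicit normal generators, I would pick a maximal tree $T\subseteq G^k$; since $q$ is an isomorphism on vertices (Proposition \ref{folding2}(ii)), $q(T)$ is a maximal tree in $G^m$, and the edges outside $T$ (resp.\ outside $q(T)$) give, via Proposition \ref{fundamentalgroupgraph}, bases $\sigma_1,...,\sigma_r$ of $\pi_1(G^k,*)$ and $\tau_1,...,\tau_s$ of $\pi_1(G^m,*)$. The map $q_*$ sends each $\sigma_i$ either to $1$ or to some $\tau_j$, so choosing a preimage $\sigma_{i(j)}$ for each $\tau_j$ produces the set
$$N=\{\sigma_i:q_*(\sigma_i)=1\}\cup\{\sigma_{i'}\sigma_{i(j)}^{-1}:q_*(\sigma_{i'})=\tau_j,\ i'\ne i(j)\},$$
which is a subset of a basis of $\pi_1(G^k,*)$ and normally generates $\ker q_*$. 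Applying $\theta^{-1}\circ p_*^{-1}$ (where $p_*^{-1}$ is realised concretely by composing pointed homotopy inverses of each rank-preserving fold, as in the original proof, and $\theta^{-1}$ is computed by reading off the label of a reduced loop, substituting $x_i^{\pm1}$ for each crossing of an edge of $\bcore{\gen{g_i}}$) transports $N$ to a finite set of normal generators of $\fI_{g_1,...,g_m}$ which is a subset of a basis of $H*\gen{x_1}*...*\gen{x_m}$. This yields (i), (ii), (iii) simultaneously.

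The only point that genuinely requires care, beyond transcribing the univariate argument, is the book-keeping in $\theta^{-1}$: a reduced loop in $G$ may pass through the basepoint and move between different wedge summands, and one must check that reading off labels separately on each visit to a summand yields a well-defined element of the free product $H*\gen{x_1}*...*\gen{x_m}$. This is immediate from the van Kampen presentation of $\pi_1$ of a wedge and the fact that each $\bcore{\gen{g_i}}$ meets the rest of $G$ only at $*$, so it is not a serious obstacle; everything else is a direct re-run of the proof of Theorem \ref{idealfingen}, and the algorithmic content follows because each step (building $G$, folding, choosing $T$, computing $N$, inverting $p$ and $\theta$) is explicitly effective.
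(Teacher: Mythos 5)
Your proposal is correct and follows essentially the same route as the paper, which proves the multivariate theorem by running the argument of Theorem \ref{idealfingen} verbatim on the wedge $G=\bcore{H}\vee\bcore{\gen{g_1}}\vee...\vee\bcore{\gen{g_m}}$ with the two-stage folding of Proposition \ref{folding2}. Your extra remark on the book-keeping in $\theta^{-1}$ is a harmless (and correct) elaboration of a point the paper leaves implicit.
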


We have an isomorphism $\theta:H*\gen{x_1}*...*\gen{x_m}\rar\pi_1(G,*)$ so that we can define the same correspondence as in Definitions \ref{defcorrpath} and \ref{defcorrequation}. Non-trivial equations $w\in\fI_{g_1,...,g_m}$ correspond to reduced paths $\sigma:I_l\rar G$ with $\sigma(0)=\sigma(1)=*$ and such that $f\circ\sigma$ is homotopically trivial (relative to its endpoints). The following three lemmas relate the degree of an equation to its corresponding path, and the proofs are exactly the same as for Lemmas \ref{degree}, \ref{innermostcouple} and \ref{innermostbounded}.

\begin{mylemma}
Let $\sigma:I_l\rar G$ be a cyclically reduced path. For $i=1,...,m$ let $e_i$ be any edge of $G$ that belongs to the subgraph $\core{\gen{g_i}}$. Let also $(d_1,...,d_m)$ be the multi-degree of the equation $w$ corresponding to $\sigma$. Then the path $\sigma$ crosses the edge $e_i$ exactly $d_i$ times (in either direction).
\end{mylemma}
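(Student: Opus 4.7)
The plan is to copy the proof strategy of Lemma \ref{degree} essentially verbatim, adapting the bookkeeping to multiple variables. Given a cyclically reduced equation $w\in\fI_{g_1,\ldots,g_m}$, up to cyclic permutation I would write it as a reduced word of the form
$$w=c_0\,x_{j_1}^{\alpha_1}\,c_1\,x_{j_2}^{\alpha_2}\,c_2\cdots x_{j_r}^{\alpha_r}\,c_r$$
with $\alpha_k\in\bZ\setminus\{0\}$, with $c_1,\ldots,c_{r-1}\in H\setminus\{1\}$ and $c_0,c_r\in H$ (possibly trivial), and with $j_{k}\ne j_{k+1}$ whenever the intermediate coefficient $c_k$ is trivial. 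Cyclic reduction ensures that no further cancellation between the boundary pieces is possible.

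Next I would apply the isomorphism $\theta$ to obtain
$$\sigma=\theta(w)=\theta(c_0)\cdot\theta(x_{j_1}^{\alpha_1})\cdot\theta(c_1)\cdot\theta(x_{j_2}^{\alpha_2})\cdots\theta(x_{j_r}^{\alpha_r})\cdot\theta(c_r),$$
where $\cdot$ denotes honest concatenation of combinatorial paths. The key geometric input is that in $G=\bcore{H}\vee\bcore{\gen{g_1}}\vee\cdots\vee\bcore{\gen{g_m}}$ the wedge factors only meet at the basepoint $*$, and each of them is folded. Consequently the concatenation above is already reduced (no cancellation at the seams), which matches the fact that $\sigma$ is the unique reduced path in its homotopy class.

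Now I would account, factor by factor, for how many times each edge $e_i\in\core{\gen{g_i}}$ is crossed. Each subpath $\theta(c_k)$ lies entirely in $\bcore{H}$ and therefore crosses no edge of any $\core{\gen{g_i}}$. Each subpath $\theta(x_{j_k}^{\alpha_k})$ lies in $\bcore{\gen{g_{j_k}}}$; it crosses every edge of $\core{\gen{g_{j_k}}}$ exactly $\abs{\alpha_k}$ times, and crosses no edge of $\core{\gen{g_i}}$ for $i\ne j_k$. Summing contributions, the total number of times $\sigma$ crosses $e_i$ equals $\sum_{k:\,j_k=i}\abs{\alpha_k}$, which is exactly $d_i$ by the definition of multi-degree.

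There is no real obstacle here beyond the routine verification that the wedge decomposition of $G$ turns cyclic reduction of $w$ into combinatorial reduction of the concatenated path; this is automatic from the one-variable case applied componentwise, since the different $\bcore{\gen{g_i}}$ and $\bcore{H}$ share only $*$ and each is folded. Everything else is the same bookkeeping used in Lemma \ref{degree}.
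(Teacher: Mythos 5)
Your proof is correct and follows essentially the same route as the paper, which proves the multivariate lemma by repeating the argument of Lemma \ref{degree}: write the cyclically reduced equation as an alternating product of $H$-coefficients and powers $x_{j_k}^{\alpha_k}$, observe via $\theta$ that the concatenated path is already reduced because the wedge factors of $G$ are folded and meet only at the basepoint, and count crossings of $e_i$ factor by factor. Your added remark that the seams cannot cancel (and the normal-form condition $j_k\ne j_{k+1}$ when an intermediate coefficient is trivial) is exactly the ``immediate to see'' step of the original proof, so nothing is missing.
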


\begin{mylemma}
Let $\sigma:I_l\rar G$ be a reduced path with $\sigma(0)=\sigma(1)=*$ and such that $f\circ\sigma$ is homotopically trivial (relative to its endpoints); let $(s_1,t_1),...,(s_{l/2},t_{l/2})$ be a maximal reduction process for $f\circ\sigma$. Let $(s_i,t_i)$ be an innermost cancellation. Then, among $\sigma(s_i)$ and $\sigma(t_i)$, one is an edge of $\bcore{H}$ and the other is an edge of $\bcore{\gen{g_1}}\vee...\vee\bcore{\gen{g_m}}$.
\end{mylemma}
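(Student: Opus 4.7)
The plan is to transcribe, essentially verbatim, the proof of Lemma \ref{innermostcouple} from the single-variable setting: the three ingredients that made that argument work are all still available here, and the only new work is a small amount of bookkeeping over the multiple $g$-petals.

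First, the fact that $(s_i,t_i)$ belongs to a reduction process for $f\circ\sigma$ means that $(f\circ\sigma)(s_i)$ and $(f\circ\sigma)(t_i)$ are the same edge of $R_n$ crossed with opposite orientations, and the label-preserving property of $f:G\rar R_n$ then forces $\sigma(s_i)$ and $\sigma(t_i)$ to carry the same letter $a_k\in\{a_1,\dots,a_n\}$ and to be traversed along $\sigma$ with opposite orientations. Second, the innermost hypothesis forces $s_i$ and $t_i$ to be adjacent on $I_l$, so $\sigma(s_i)$ and $\sigma(t_i)$ share the common vertex $v=\sigma(\opiu{s_i})=\sigma(\omeno{t_i})$; read with their label-orientations, both are $a_k$-edges terminating at $v$. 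Reducedness of $\sigma$ rules out $\sigma(s_i)=\sigma(t_i)$, so the two edges are distinct.

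The closing step invokes foldedness of each wedge summand. Each of $\bcore{H},\bcore{\gen{g_1}},\dots,\bcore{\gen{g_m}}$ sits as a subgraph of the corresponding pointed covering of $R_n$, and such pointed coverings are automatically folded: at every vertex there is at most one $a_k$-labeled edge terminating there. This rules out $\sigma(s_i)$ and $\sigma(t_i)$ both lying in $\bcore{H}$, and likewise for any single $\bcore{\gen{g_j}}$; the claimed dichotomy then follows exactly as in the single-variable case. The only obstacle, which is mild, is that the foldedness argument must now be discharged for each $g$-petal $\bcore{\gen{g_j}}$ separately; this is the lone place where the multivariate nature of the statement surfaces, and it is handled by a routine $j$-by-$j$ inspection with no new conceptual ingredient required.
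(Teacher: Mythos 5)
Your argument is fine up to its last step, but the final inference --- that foldedness rules out both edges lying in $\bcore{H}$ or in any single $\bcore{\gen{g_j}}$, ``and the claimed dichotomy then follows exactly as in the single-variable case'' --- does not follow. In the one-variable case $G$ has exactly two wedge summands, so excluding ``both edges in the same summand'' forces one edge into each of $\bcore{H}$ and $\bcore{\gen{g}}$. With $m\ge2$ there is a third possibility that your $j$-by-$j$ inspection never touches: $\sigma(s_i)$ and $\sigma(t_i)$ may lie in two \emph{different} petals $\bcore{\gen{g_j}}$ and $\bcore{\gen{g_{j'}}}$ with $j\neq j'$, meeting at the basepoint (the only vertex shared by distinct summands), in which case neither edge lies in $\bcore{H}$. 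This is not a hypothetical loophole: take $F_2=\gen{a,b}$, any $H$, $g_1=ab$, $g_2=\ol{b}\,\ol{a}$, and the equation $w=x_1x_2\in\fI_{g_1,g_2}$. Its corresponding reduced path $\sigma$ has length $4$: it crosses the $a$- and $b$-labeled edges of $\bcore{\gen{g_1}}$ forwards and then the $b$- and $a$-labeled edges of $\bcore{\gen{g_2}}$ backwards; it is reduced at the basepoint because the two $b$-labeled edges are distinct, while $f\circ\sigma$ reads $ab\ol{b}\ol{a}$ and is nullhomotopic. The unique innermost cancellation pairs the two $b$-labeled edges, one in $\bcore{\gen{g_1}}$ and one in $\bcore{\gen{g_2}}$, so the conclusion as stated fails; in particular no argument can close this gap without modifying the statement.

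What your reasoning actually proves is the corrected (and, for the rest of Section \ref{SectionMultivariate}, sufficient) assertion: $\sigma(s_i)$ and $\sigma(t_i)$ lie in two distinct wedge summands of $G$; in particular at least one of them is an edge of $\bcore{\gen{g_1}}\vee\dots\vee\bcore{\gen{g_m}}$, which is all that the subsequent bound of $2(d_1+\dots+d_m)$ on innermost cancellations uses. To be fair, the paper itself offers no separate proof here --- it only declares the proof to be ``exactly the same'' as that of Lemma \ref{innermostcouple} --- so the oversight originates in the multivariate statement rather than in your transcription; but as written, your closing step asserts the stronger dichotomy, which your argument does not (and cannot) deliver, so you should restate the conclusion in the weakened form above.
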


\begin{mylemma}
Let $\sigma:I_l\rar G$ be a cyclically reduced path with $\sigma(0)=\sigma(1)=*$ and such that $f\circ\sigma$ is homotopically trivial (relative to its endpoints); let $(s_1,t_1),...,(s_{l/2},t_{l/2})$ be a maximal cancellation process for $f\circ\sigma$. Let $(d_1,...,d_m)$ be the multi-degree of the equation $w\in\fI_{g_1,...,g_m}$ corresponding to $\sigma$. Then the reduction process contains at most $2(d_1+...+d_m)$ innermost cancellations.
\end{mylemma}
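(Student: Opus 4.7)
My plan is to mirror the proof of Lemma \ref{innermostbounded} almost verbatim, with the only adjustment being a bookkeeping step that accounts for the several free factors $\gen{x_1},\dots,\gen{x_m}$ instead of just one.

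First I would put the cyclically reduced equation $w$ into normal form in the free product $H*\gen{x_1}*\cdots*\gen{x_m}$: write $w=y_1y_2\cdots y_r$ (read cyclically), where each $y_j$ is a non-trivial element of exactly one of the factors $H,\gen{x_1},\dots,\gen{x_m}$, and consecutive $y_j$'s (including the cyclic pair $(y_r,y_1)$) lie in different factors. Call $y_j$ an \emph{H-block} if $y_j\in H$, and an \emph{X-block} if $y_j$ lies in some $\gen{x_{k_j}}$; let $r_H$ and $r_X$ be their counts, so $r_H+r_X=r$. Then $\sigma=\theta(y_1)\cdot\theta(y_2)\cdots\theta(y_r)$ is the concatenation of paths where $\theta(y_j)$ is contained in $\bcore{H}$ when $y_j$ is an H-block and in $\bcore{\gen{g_{k_j}}}$ when $y_j$ is an X-block.

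Next, by the multivariate innermost-cancellation lemma stated just above, in any innermost cancellation $(s_i,t_i)$ one of $\sigma(s_i),\sigma(t_i)$ lies in $\bcore{H}$ and the other in $\bcore{\gen{g_1}}\vee\cdots\vee\bcore{\gen{g_m}}$, with the vertex between them being $*$. Looking at the concatenation above, such a configuration can occur only at the (cyclic) junctions between an H-block and an X-block; within a single $\theta(y_j)$ both edges are in the same subgraph, and at an X-to-X junction neither edge is in $\bcore{H}$. Therefore the number of innermost cancellations is bounded by the number of H–X junctions.

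Finally I would count these junctions. Since consecutive blocks lie in different factors, each H-block is flanked (cyclically) by two X-blocks, so the number of H–X junctions is exactly $2r_H$. The same alternation rule forces $r_H\le r_X$ (between two consecutive H-blocks in the cyclic order there must be at least one X-block). Each X-block $y_j\in\gen{x_{k_j}}$ contributes at least one occurrence of $x_{k_j}^{\pm 1}$ to the cyclic reduction of $w$, hence at least $1$ to $d_{k_j}$, giving $r_X\le d_1+\cdots+d_m$. Chaining the inequalities, the reduction process contains at most $2r_H\le 2r_X\le 2(d_1+\cdots+d_m)$ innermost cancellations.

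The main obstacle, if any, is purely combinatorial: making the inequality $r_H\le r_X$ airtight in the cyclic setting (including degenerate cases where $r_H=0$, or where $w$ lies in a single X-factor so $r=1$ and the count is trivially $0$). Otherwise the argument is a direct transcription of the one-variable proof, with ``blocks'' replacing the $c_ix^{\alpha_i}$ pattern.
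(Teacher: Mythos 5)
Your proof is correct and takes essentially the same route as the paper, which proves this lemma by transcribing the argument of Lemma \ref{innermostbounded}: decompose the cyclically reduced equation into blocks, apply the innermost-cancellation lemma, and count block junctions. The only cosmetic difference is that you bound the number of innermost cancellations by the $2r_H$ junctions at the ends of the $H$-blocks and then chain $r_H\le r_X\le d_1+\cdots+d_m$, whereas the paper counts at most two innermost cancellations per $x_i$-power block, giving $2r_X\le 2(d_1+\cdots+d_m)$ directly.
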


We can define a parallel cancellation move, in the exact same way as in Definition \ref{parallel} and Lemma \ref{parallelcancellation}. Lemma \ref{degreepreserving} about degree-preserving parallel cancellation moves remains true too, where degree-preserving means that the equations $w,w'$, before and after the parallel cancellation move respectively, have the same multi-degree $(d_1,...,d_m)=(d_1',...,d_m')$.

It is also possible to define a parallel insertion move, exactly as in Definition \ref{insertionpath} and Lemma \ref{parallelinsertion}. Lemmas \ref{parallelinverses}, \ref{parallelinverses2}, \ref{insertioncommutes}, \ref{insertionsame}, \ref{insertioninsertion} remain true.

In the exact same way as we proved Proposition \ref{findingparallels2} and Theorems \ref{main3} and \ref{main4}, we can prove the following.

\begin{myprop}
Let $w\in\fI_{g_1,...,g_m}$ be a cyclically reduced equation of multi-degree $(d_1,...,d_m)$ and let $\sigma:I_l\rar G$ be the corresponding path; let $(s_1,t_1),...,(s_{l/2},t_{l/2})$ be any maximal reduction process for $f\circ\sigma$. Suppose $l>32L^4(d_1+...+d_m)^2+16L^3(d_1+...+d_m)$. Then the reduction process contains two $\parallele$ couples $(s_\alpha,t_\alpha),(s_\beta,t_\beta)$ such that the corresponding parallel cancellation move is degree-preserving.
\end{myprop}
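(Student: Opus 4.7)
The strategy will be to copy the proof of Proposition \ref{findingparallels2} essentially verbatim, with the scalar degree $d$ replaced by the sum $D := d_1 + \cdots + d_m$. First I would remove from the interval $I_l$ every edge $r$ that belongs to a couple $(s_i, t_i)$ for which at least one of $\sigma(s_i), \sigma(t_i)$ lies in $\bcore{\gen{g_1}} \vee \cdots \vee \bcore{\gen{g_m}}$. Using the multivariate analogue of Lemma \ref{degree} stated just above, for each edge $e$ of $\bcore{\gen{g_i}}$ there are exactly $d_i$ edges of $I_l$ sent to $e$; since the disjoint wedge $\bcore{\gen{g_1}} \vee \cdots \vee \bcore{\gen{g_m}}$ contains at most $L$ edges in total, the number of edges removed is at most $2LD$. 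The residual subgraph of $I_l$ therefore splits into at most $2LD + 1$ connected components $C_1, \ldots, C_a$.

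Next, the hypothesis $l > 32 L^4 D^2 + 16 L^3 D = 16 L^3 D(2LD + 1)$ forces some component $\ol{C}$ to contain strictly more than $16 L^3 D$ edges. By the multivariate innermost-cancellation lemma, no couple of the reduction process can have both edges inside $\ol{C}$: an innermost sub-couple supported in $\ol{C}$ would, by that lemma, have an edge mapping into some $\bcore{\gen{g_j}}$, contradicting the way $\ol{C}$ was extracted. Hence at least $8L^3 D + 1$ of the edges of $\ol{C}$ are (say) left halves $s_i$ of couples, with partners $t_i$ in one of the other components $C_k \neq \ol{C}$.

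Now I would apply pigeonhole to the quintuple $(\sigma(s_i), \epsilon, \sigma(t_i), \delta, C_k)$, where $\epsilon, \delta \in \{+1,-1\}$ record the orientations with which $s_i, t_i$ cross their images and $C_k$ is the component containing $t_i$. The number of such quintuples is at most $L \cdot 2 \cdot L \cdot 2 \cdot 2LD = 8 L^3 D$, so two distinct indices $i_1 < i_2$ give the same quintuple; this immediately exhibits $(s_{i_1}, t_{i_1})$ and $(s_{i_2}, t_{i_2})$ as parallel in the sense of Definition \ref{parallel}. Assuming $\omeno{s_{i_1}} < \omeno{s_{i_2}}$, the intervals $[\omeno{s_{i_1}}, \omeno{s_{i_2}}]$ and $[\opiu{t_{i_2}}, \opiu{t_{i_1}}]$ lie wholly inside $\ol{C}$ and $C_k$ respectively, so their images avoid every $\bcore{\gen{g_j}}$; applying the multivariate Lemma \ref{degree} again, collapsing them preserves each $d_j$ individually, and hence the associated parallel cancellation move is degree-preserving in the multi-index sense.

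The one point that requires care, rather than being a real obstacle, is the bookkeeping in the counting: one must verify that $L$ still simultaneously bounds the number of edges of $G$, the number of edges of the wedge $\bcore{\gen{g_1}} \vee \cdots \vee \bcore{\gen{g_m}}$, and the number of component labels $C_k$ appearing in the quintuples, which all hold because $G$ has $L$ edges total. Beyond that, each of the four ingredients — the edge-removal count, the extraction of a long component $\ol{C}$, the innermost obstruction to a couple living in $\ol{C}$, and the pigeonhole on quintuples — transfers without modification from the univariate proof, since the relevant versions of Lemmas \ref{degree}, \ref{innermostcouple} and \ref{innermostbounded} have already been restated in this section for the multivariate setting.
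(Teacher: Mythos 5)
Your proposal is correct and is essentially the paper's own argument: the paper proves this proposition by declaring it follows ``in the exact same way'' as Proposition \ref{findingparallels2}, which is precisely the verbatim transfer you carry out, with $d$ replaced by $d_1+\cdots+d_m$ and $\bcore{\gen{g}}$ replaced by $\bcore{\gen{g_1}}\vee\cdots\vee\bcore{\gen{g_m}}$, using the multivariate versions of Lemmas \ref{degree}, \ref{innermostcouple} and \ref{innermostbounded}. The counting checks (removal of at most $2LD$ edges, at most $2LD+1$ components, at most $8L^3D$ quintuples) match the paper's, so nothing further is needed.
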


\begin{mythm}
Suppose $\fI_{g_1,...,g_m}$ contains a non-trivial equation of degree $(d_1,...,d_m)$. Then $\fI_{g_1,...,g_m}$ contains non-trivial equation $w$ of degree $(d_1,...,d_m)$ such that the corresponding path $\sigma:I_l\rar G$ has length $l\le 32L^4(d_1+...+d_m)^2+16L^3(d_1+...+d_m)$.
\end{mythm}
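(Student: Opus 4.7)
The plan is to mirror the proof of Theorem \ref{main3} in the one-variable case, exploiting the multi-variable analogue of Proposition \ref{findingparallels2} which has just been stated. First I would pick a non-trivial equation $w\in\fI_{g_1,\dots,g_m}$ of multi-degree exactly $(d_1,\dots,d_m)$ such that the corresponding reduced path $\sigma:I_l\rar G$ has the minimum possible length $l$ among all such equations; since conjugation does not change the multi-degree and a non-cyclically-reduced word can always be cyclically reduced without increasing its length, this $w$ is automatically cyclically reduced.

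Next I would argue by contradiction: suppose $l>32L^4(d_1+\dots+d_m)^2+16L^3(d_1+\dots+d_m)$. Take any maximal reduction process $(s_1,t_1),\dots,(s_{l/2},t_{l/2})$ for $f\circ\sigma$; this exists because $w\in\fI_{g_1,\dots,g_m}$ means exactly that $f\circ\sigma$ is homotopically trivial relative to the endpoints. Applying the multivariate analogue of Proposition \ref{findingparallels2} (stated just above the theorem we are proving), we obtain two parallel couples $(s_\alpha,t_\alpha),(s_\beta,t_\beta)$ in this reduction process such that the associated parallel cancellation move is degree-preserving.

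Now I would perform this parallel cancellation move, obtaining, by the multivariate version of Lemma \ref{parallelcancellation}, a new reduced path $\sigma':I_{l'}\rar G$ with $\sigma'(0)=\sigma'(1)=*$ and $f\circ\sigma'$ homotopically trivial; the corresponding equation $w'\in\fI_{g_1,\dots,g_m}$ is non-trivial (since the parallel cancellation strictly shortens the path but cannot kill it, as $\sigma'$ is still reduced and carries a maximal reduction process coming from the original one). Since the cancellation is degree-preserving, the multi-degree of $w'$ is still $(d_1,\dots,d_m)$. But $l'<l$, which contradicts our choice of $w$ as having minimum-length corresponding path. Hence the assumption $l>32L^4(d_1+\dots+d_m)^2+16L^3(d_1+\dots+d_m)$ is impossible, completing the proof.

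The main step that actually requires care is the multivariate version of Proposition \ref{findingparallels2}, but this is granted by the paper (and the argument is the same as in one variable, since $\bcore{\gen{g_1}}\vee\dots\vee\bcore{\gen{g_m}}$ plays the role of $\bcore{\gen{g}}$, and $d_1+\dots+d_m$ plays the role of $d$ in the counting: at most $L(d_1+\dots+d_m)$ edges of $I_l$ are sent into $\bcore{\gen{g_1}}\vee\dots\vee\bcore{\gen{g_m}}$, so removing their cancellation pairs leaves at most $2L(d_1+\dots+d_m)$ deleted edges and hence at most $2L(d_1+\dots+d_m)+1$ surviving components, etc.). Once this is available the rest of the argument is essentially formal, and the only routine subtlety is verifying that $w'\ne1$; this follows because the residual reduction process produced by Lemma \ref{parallelcancellation} is strictly smaller than $l'/2$ (two couples were discarded entirely), so $f\circ\sigma'$ does not collapse to nothing, i.e.\ $\sigma'$ is non-null-homotopic in $G$.
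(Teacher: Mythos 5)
Your main argument is exactly the paper's: take a non-trivial equation of multi-degree $(d_1,\dots,d_m)$ whose corresponding path has minimal length (hence is cyclically reduced), and if the length exceeded the bound, the multivariate version of Proposition \ref{findingparallels2} would yield a degree-preserving parallel cancellation producing a strictly shorter path of the same multi-degree, contradicting minimality. This matches the proof of Theorem \ref{main3}, which is what the paper invokes for the multivariate statement.

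One correction to your closing remark about $w'\ne1$, which as written is both false and self-undermining. The residual reduction process furnished by Lemma \ref{parallelcancellation} is \emph{not} strictly smaller than $l'/2$: couples are discarded exactly in step with the collapsed edges (both edges of a discarded couple are collapsed), so the surviving process has precisely $l'/2$ couples, i.e.\ it is a maximal process realizing the full cancellation of $f\circ\sigma'$. That full cancellation is precisely what you need, since it is what guarantees $f\circ\sigma'$ is null-homotopic and hence $w'\in\fI_{g_1,\dots,g_m}$; if the process really stopped short of $l'/2$ couples, then $f\circ\sigma'$ would be non-null-homotopic and $w'$ would not lie in the ideal at all. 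The non-triviality of $w'$ has nothing to do with this count: it follows simply because $\sigma'$ is a reduced path from $*$ to $*$ of length $l'\ge2$ (the cancellation move always leaves $2\le l'\le l-2$), so it represents a non-trivial class in $\pi_1(G,*)$ and $w'=\theta^{-1}([\sigma'])\ne1$ --- which is in fact the (correct) reason you gave parenthetically earlier in the argument.
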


\begin{mycor}
There is an algorithm that, given $H,g_1,...,g_m$ and an $m$-tuple $(d_1,...,d_m)$ of non-negative integers, tells us whether $\fI_{g_1,...,g_m}$ contains non-trivial equations of multi-degree $(d_1,...,d_m)$, and, if so, produces an equation $w\in\fI_{g_1,...,g_m}$ of multi-degree $(d_1,...,d_m)$.
\end{mycor}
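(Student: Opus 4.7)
The plan is to argue exactly as in Corollary \ref{algorithm3}, using the preceding multivariate theorem as the key ingredient. First I would build the graph $G=\bcore{H}\vee\bcore{\gen{g_1}}\vee...\vee\bcore{\gen{g_m}}$ from the inputs $H,g_1,\dots,g_m$; this can be done algorithmically (using algorithm 5.4 of \cite{Stallings}) and yields in particular the integer $L=$ number of edges of $G$ together with the labeling map $f:G\rar R_n$.

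Next I would set the explicit bound $B=32L^4(d_1+...+d_m)^2+16L^3(d_1+...+d_m)$ coming from the multivariate version of Theorem \ref{main3} stated just above the corollary. By that theorem, $\fI_{g_1,\dots,g_m}$ contains a non-trivial equation of multi-degree $(d_1,\dots,d_m)$ if and only if it contains such an equation whose corresponding reduced path $\sigma:I_l\rar G$ has length $l\le B$. So it suffices to enumerate all reduced combinatorial paths $\sigma:I_l\rar G$ from the basepoint to itself with $l\le B$; there are finitely many such paths, and they can be listed algorithmically (e.g.\ by a finite breadth-first search in $G$, discarding paths with a backtrack).

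For each candidate path $\sigma$, the algorithm checks whether $f\circ\sigma$ is null-homotopic in $R_n$: this reduces to checking whether the word in $a_1,\dots,a_n$ obtained by reading the labels and orientations of $\sigma$ reduces to the empty word, which can be done in linear time by free reduction. If so, $\sigma$ is the corresponding path of some equation $w=\theta^{-1}([\sigma])\in\fI_{g_1,\dots,g_m}$, and the multi-degree $(d_1',\dots,d_m')$ of $w$ is read off from the cyclic reduction of $\sigma$ by counting, for each $i$, the occurrences of edges of $\bcore{\gen{g_i}}$ traversed by the cyclically reduced representative (this is the multivariate degree lemma stated above). The algorithm records $w$ if $(d_1',\dots,d_m')=(d_1,\dots,d_m)$.

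After scanning all candidate paths, the algorithm outputs any equation found, or reports that no such equation exists; by the quoted theorem, this answer is correct. There is no real obstacle here: all the work is in the preceding theorem (the bound on path length and its proof via the degree-preserving parallel cancellation move), which is itself stated in the multivariate setting immediately above. The corollary is a direct algorithmic packaging of that finite-search bound, entirely parallel to the one-variable Corollary \ref{algorithm3}.
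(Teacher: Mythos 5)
Your proposal is correct and follows exactly the route the paper intends: the corollary is the direct algorithmic consequence of the multivariate analogue of Theorem \ref{main3}, obtained by enumerating all reduced basepoint-to-basepoint paths in $G=\bcore{H}\vee\bcore{\gen{g_1}}\vee...\vee\bcore{\gen{g_m}}$ of length at most $32L^4(d_1+...+d_m)^2+16L^3(d_1+...+d_m)$, testing null-homotopy of $f\circ\sigma$ by free reduction, and reading off the multi-degree via the multivariate degree lemma, just as in the one-variable Corollary \ref{algorithm3}. No gaps.
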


\begin{mythm}
Let $w\in\fI_{g_1,...,g_m}$ be a cyclically reduced equation of multi-degree $(d_1,...,d_m)$ and let $\sigma:I_l\rar G$ be the corresponding path. Then there is a cyclically reduced equation $w'\in\fI_{g_1,...,g_m}$ of degree $(d_1,...,d_m)$ with corresponding path $\sigma':I_{l'}\rar G$, and a maximal reduction process for $\sigma'$, such that:

(i) The path $\sigma'$ has length $l'\le 32L^4(d_1+...+d_m)^2+16L^3(d_1+...+d_m)$.

(ii) The path $\sigma$ can be obtained from $\sigma'$ by means of at most $l'/2$ insertion moves, each of them performed on a distinct couple of edges of $I_{l'}$.
\end{mythm}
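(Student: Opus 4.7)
The plan is to mirror the proof of Theorem \ref{main2} (and its generalization Theorem \ref{main4}), using the multivariate analogues of Proposition \ref{findingparallels2} and of the lemmas on parallel insertion that were just listed above as still holding in the multi-variable setting. The only place where the single-variable case used anything substantive beyond the formal machinery of cancellation/insertion moves was in the bound coming from Proposition \ref{findingparallels2}, and its multivariate version has already been stated with $d$ replaced by $d_1+\dots+d_m$, which is exactly what appears in the bound in (i).

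First I would shorten $\sigma$. Starting from the cyclically reduced path $\sigma:I_l\rar G$ corresponding to $w$, if $l>32L^4(d_1+\dots+d_m)^2+16L^3(d_1+\dots+d_m)$ then by the multivariate version of Proposition \ref{findingparallels2} I can find two parallel couples in a maximal reduction process for $f\circ\sigma$ along which the cancellation move is degree-preserving (meaning the whole multi-degree $(d_1,\dots,d_m)$ is preserved, by the multivariate analogue of Lemma \ref{degreepreserving}). Performing the associated parallel cancellation (Lemma \ref{parallelcancellation}) I obtain a strictly shorter cyclically reduced path in $\fI_{g_1,\dots,g_m}$ of the same multi-degree. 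Iterating finitely many times yields a cyclically reduced $\sigma':I_{l'}\rar G$ whose corresponding equation $w'\in\fI_{g_1,\dots,g_m}$ has the same multi-degree $(d_1,\dots,d_m)$ and satisfies $l'\le 32L^4(d_1+\dots+d_m)^2+16L^3(d_1+\dots+d_m)$, proving (i).

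Next I would recover $\sigma$ from $\sigma'$ by insertions. By the multivariate analogue of Lemma \ref{parallelinverses}, each of the degree-preserving cancellation moves used above is the inverse of a parallel insertion move; reversing the sequence expresses $\sigma$ as the result of a finite sequence $\iota_1,\dots,\iota_p$ of insertion moves performed on $\sigma'$. Choose such a sequence with $p$ minimum. Then I would argue exactly as in the proof of Theorem \ref{main2}: if some $\iota_r$ acts on a couple of edges that was created by an earlier $\iota_q$, choose an innermost such pair and use Lemma \ref{insertioncommutes} to push $\iota_r$ adjacent to $\iota_q$, then apply Lemma \ref{insertioninsertion} to merge the two into a single insertion move, contradicting the minimality of $p$. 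Hence every $\iota_j$ acts on a couple of edges of the original $I_{l'}$. If two of the $\iota_j$'s acted on the same couple of $I_{l'}$, bring them adjacent by repeated use of Lemma \ref{insertioncommutes} and merge them via Lemma \ref{insertionsame}, again contradicting minimality. Therefore the $p$ insertion moves act on $p$ distinct couples of edges of $I_{l'}$, so $p\le l'/2$, giving (ii).

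The only real obstacle is verifying that Lemmas \ref{parallelcancellation}, \ref{parallelinverses}, \ref{insertioncommutes}, \ref{insertionsame} and \ref{insertioninsertion} do carry over unchanged to the multivariate setting; but this is already asserted in the text preceding the theorem and it is immediate from their original proofs, since they only use the combinatorics of reduction processes on paths in $G$ together with the fact that $G$ is a graph obtained by wedging $\bcore{H}$ with core graphs of cyclic subgroups. No argument in those lemmas distinguished $\bcore{\gen{g}}$ from $\bcore{\gen{g_1}}\vee\dots\vee\bcore{\gen{g_m}}$, so nothing needs to be reworked and the proof structure of Theorem \ref{main2} applies verbatim once the multivariate Proposition \ref{findingparallels2} is in place.
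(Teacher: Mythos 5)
Your proposal is correct and follows essentially the same route as the paper: the paper proves this multivariate theorem by declaring it "in the exact same way" as Theorem \ref{main4}, whose proof is in turn "completely analogous" to Theorem \ref{main2} — namely, shorten via degree-preserving parallel cancellations found by the (multivariate) Proposition \ref{findingparallels2}, then invert them into a minimal sequence of insertion moves and consolidate using Lemmas \ref{insertioncommutes}, \ref{insertionsame} and \ref{insertioninsertion} to get the bound $p\le l'/2$. Nothing in your argument deviates from or adds a gap to the paper's intended proof.
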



\bibliographystyle{alpha}
\nocite{*}
\bibliography{bibliography.bib}

\end{document}